\apptocmd{\sloppy}{\hbadness 10000\relax}{}{}
\newtheorem{theorem}{\rm\bf Theorem}[section]
\newtheorem{proposition}[theorem]{\rm\bf Proposition}
\newtheorem{lemma}[theorem]{\rm\bf Lemma}
\newtheorem{corollary}[theorem]{\rm\bf Corollary}
\newtheorem*{theorem*}{Theorem}
\newtheorem*{theorem 1}{\rm\bf Proposition 1}
\newtheorem*{theorem 2}{\rm\bf Proposition 2}
\theoremstyle{definition}
\newtheorem{definition}[theorem]{\rm\bf Definition}
\theoremstyle{remark}
\newtheorem{remark}[theorem]{\rm\bf Remark}
\newtheorem{example}[theorem]{\rm\bf Example}
\def\half#1#2{\begin{matrix}\frac{#1}{#2}\end{matrix}}
\def\R#1{\mathbb{R}^{#1}}
\def\Hurw{\mathbb{F}}
\def\x{\overline{x}}
\def\scal#1#2{\langle #1, #2 \rangle}
\def\SCAL#1#2{h( #1, #2)    }
\def\Alg{\mathbb{A}}
\def\xp{x_0}
\def\xm{x_1}
\DeclareMathOperator{\im}{im}
\DeclareMathOperator{\trace}{tr}
\DeclareMathOperator{\End}{End}
\newcommand{\sphere}{\mathbb{S}}
\newcommand{\quat}{\mathbb{H}}
\newcommand{\cayley}{\mathbb{O}}
\newcommand{\fie}{\mathbb{F}}
\newcommand{\so}{\mathfrak{so}}
\newcommand{\trip}{{T}}
\newcommand{\cmlt}{\times}
\newcommand{\cone}{\mathsf{C}}
\newcommand{\alg}{\mathbb{A}}
\newcommand{\om}{\omega}
\newcommand{\mlt}{\circ}
\newcommand{\hess}{\operatorname{Hess}}
\renewcommand{\part}{\vdash}
\newcommand{\Id}{\operatorname{Id}}
\newcommand{\id}{\operatorname{Id}}
\newcommand{\lap}{\Delta}
\newcommand{\la}{\lambda}
\newcommand{\eno}{\operatorname{End}}
\newcommand{\si}{\sigma}
\newcommand{\re}{\operatorname{Re}}
\newcommand{\lb}{\langle}
\newcommand{\ra}{\rangle}
\newcommand{\gl}{\mathfrak{gl}}
\newcommand{\g}{\mathfrak{g}}
\newcommand{\rea}{\mathbb R}
\newcommand{\com}{\mathbb C}
\newcommand{\tr}{\operatorname{tr}}
\newcommand{\minop}{\mathscr{M}}
\def\vt{{v}}
\def\xt{{x}}
\def\wt{{w}}
\begin{document}

\title{Algebraic constructions of cubic minimal cones}%{Tripling construction of cubic minimal cones}

\author[D.~J.~F.~Fox]{Daniel J. F. Fox}
\address[D.~J.~F.~Fox]{Departamento de Matemática Aplicada\\ Escuela Técnica Superior de Arquitectura\\ Universidad Politécnica de Madrid\\ Av. Juan de Herrera 4\\ 28040 Madrid\\ Spain}
\email{daniel.fox@upm.es}

\author[V.G.~Tkachev]{Vladimir G. Tkachev}
\address[V.G.~Tkachev]{Department of Mathematics\\ Link\"oping University\\ 58183\\ Sweden}
\email{vladimir.tkatjev@liu.se}

\dedicatory{Dedicated to Nikolai Nadirashvili on the occasion of his 70th birthday}

\begin{abstract}
Hsiang algebras are a class of nonassociative algebra defined in terms of a relation quartic in elements of the algebra.
This class arises naturally in relation to the construction of real algebraic minimal cones. Additionally, Hsiang algebras were crucial in the construction of singular (trulsy viscosity) solutions of nonlinear uniformly elliptic partial differential equations in a series of papers by Nikolai Nadirashvili and Serge Vl\u{a}du\c{t}. The classification of Hsiang algebras is a challenging problems, based on a Peirce decomposition like that used to study Jordan algebras, although more complicated.
%The classification of Hsiang algebras is a challenging problem with various geometrical, analytical and algebraical implications.
This paper introduces two new tools for studying Hsiang algebras: a distinguished class of algebras called \textit{quasicomposition} that generalize the Hurwitz algebras (the reals, complexes, quaternions, and octonions) and cross-product algebras and a tripling construction associating with a given algebra one of three times the dimension of the original algebra, that can be thought of as a kind of analogue of the Cayley-Dickson doubling process. One main result states that the triple is an exceptional Hsiang algebra if and only if the original algebra is quasicomposition. The quasicomposition and tripling notions are interesting in their own right and will be considered in a more general form elsewhere.
\end{abstract}

%%%% The phrase about series of paper of Nikolai Nadirashvili and Serge Vl\u{a}du\c{t} makes more sense in the introduction than in the abstract.

\keywords{Metrized algebras; Hsiang algebras; viscosity solutions; quasicomposition algebras; Hurwitz algebras; tripling construction}

\subjclass[2000]{
%Primary 31B15; Secondary 31C05, 44A12, 35J62
}

\maketitle

%%%%%%% Table of contents depth specification
\setcounter{tocdepth}{1}  % Print the section only to the toc
\tableofcontents
%%% Table of contexts can be suppressed if you like - I find it useful while editing

%\begin{quotation}
%\textsl{Maybe he would determine the laws of the manifestations of fire and the invasions of frozen mountain climbers. And maybe none of this would happen. Maybe he’d just sit and pore over the work and try to discover the point of intersection of the theory of M-cavities and the qualitative analysis of American cultural influence on Japan, and probably that will be a very strange point of intersection, and it’s also probable that he will find the key to the whole vicious mechanism in that point, and maybe even the key to controlling the mechanism.}
%
%\hfill Definitely Maybe, 	Arkady and Boris Strugatsky, 1973.
%\end{quotation}
 \bigskip
% Here is where the main text should be typed.

% Please, consider the following suggestions while preparing your text:
% (Following these suggestions will speed up the editorial process.)
% * Avoid starting a new sentence with a mathematical formula;
% * Try to separate adjacent formulas with words;
% * Avoid inline formulas longer than half of a line. You can use math
%   displays (\[...\]) instead;
% * Consider the use the enumerate and itemize environments for lists;
% * Consider the use of \dots, \ldots, \dotsc, \cdot, etc, instead of "..."
%   or ".";
% * Instead of numbering or citing an article by hand (using parenthesis or
%   brackets), consider the use of \cite, \ref and \eqref for citations and
%   cross-references;
% * Try to avoid inserting horizontal or vertical spacing, such as \hskip,
%   \vskip and \bigskip;
% * Try to avoid inserting line or page brakes, such as \\, \newpage and
%   \clearpage.

%The worst thing you can do to a problem is to solve it completely… because then you have to find something else to work on.
%̶ Dan Kleitman

%\ytableausetup{smalltableaux}

%%% This maybe could be put in the introduction
%In summary, all radial eigencubics in $\R{n}$ can be naturally divided into two distinguished  classes:
%
%\begin{enumerate}%[label=(\roman*)]
%  \item[(Cl)] Clifford type radial eigencubics.
%  \item[(Ex)] \textit{Exceptional} radial eigencubics.
%\end{enumerate}

\section{Introduction}
In this paper, we discuss why the construction and classification of certain commutative nonassociative algebras called Hsiang and quasicomposition algebras are relevant and interesting from an analytic and differential geometric point of view. Nonassociative algebraic structures occur naturally in relation to the description of homogeneous and special model solutions in a wide variety of geometric and analytic problems.
A typical example is the relation between finite-dimensional Jordan algebras and the geometry of self-dual homogeneous cones and the associated tube domains and the analysis of functions on them \cite{FKbook, Koecher}.
%%% I am not sure I see how what follows relates to what was just said. They seem a bit independent.
The approach to nonassociative commutative algebras taken here is related technically to that employed in recently studied contexts that include the Majorana, axial, and code algebras which axiomatize properties of the Griess algebras associated with certain sporadic finite simple groups and vertex operator algebras \cite{DeMedts-Peacock-Shpectorov-VanCouwenberghe, Griess85, Ivanov09, Ivanov15, HSS18b, Rehren17},
the construction of viscosity solutions to nondivergent elliptic type PDEs \cite{NV2007, NV2011a,NTV}, and commutative Killing-Einstein and Hessian algebras \cite{Fox2021a,Fox2021b,Fox2022}.

The main motivation for the present paper is the problem first posed by Wu-yi Hsiang of classifying real algebraic minimal (meaning mean curvature zero) cones \cite{Hsiang67}.
The understanding of the algebraic structure and variational properties of minimal cones in $\R{n}$ is a central issue in global analysis and geometry \cite{Yau92, Yau2000}.
Quadratic minimal cones (the Clifford-Simons minimal cones) play a basic role in the resolution of the Bernstein problem for minimal hypersurfaces in $\R{n}$ for $n\le7$ \cite{Almgren, Fleming, Simons} and the construction of counterexamples for $n=8$ \cite{BGG}.

Recent developments in the structure of singular (truly viscosity) solutions of nonlinear  elliptic partial differential equations initiated by N.~Nadirashvili and S.~Vl{\u{a}}du{\c{t}} in \cite{NV2007}, and later in the context of \emph{cubic} (degree 3) minimal cones and radial eigencubics have shown remarkable connections with nonassociative algebras \cite{NTVbook, Tk19a}.
More precisely,  for certain \emph{exceptional radial eigencubics} $u(x)$, the function $u(x)/|x|$ is a viscosity solution of a Hessian uniformly elliptic equation and of a uniformly elliptic Isaacs equation in the unit ball \cite{NTVbook}.
Although general algebraic minimal cones (of degree 3 and higher) have been the subject of considerable recent interest, still little is known about their structure.
Even the construction of particular examples remains difficult.
On the other hand, known examples of cubic minimal cones arise in diverse algebraic, analytical, and geometrical contexts. We refer the interested reader to \cite{Tk19a, Tk19M} and the references therein for further information.
%The classification of minimal cones is Problem 104 on Yau's list of important open problems in geometry \cite{Yau82problem}.
%%% This it not really what Problem 104 asks. It is quite a bit more specific.
%Besides of great intrinsic interest, this problem has many connections with other areas of mathematics and physics \cite{Gibbons}, \cite{delPino}.
%%% To me this is vague. What do Gibbons and del Pino talk about?

%Although quadratic minimal cones  were completely characterized in the 60's by Wu-yi Hsiang \cite{Hsiang67}, the classification of algebraic minimal cones of degree three (\textit{cubic} minimal cones) is still non-complete.

A real algebraic minimal cone is determined by a homogeneous degree $k$ polynomial solution of the equation
\begin{align}\label{pre1laplace}
|D u|^2\Delta u-\half{1}{2}\scal{Du}{D|Du|^2}=B(x)u(x), \qquad x\in \R{n}.
\end{align}
for some degree $2k - 4$ homogeneous polynomial $B$ on the righthand side (see \eqref{hsianganalytic} below). Here, the cubic case of \eqref{pre1laplace} is called the \emph{nonradial Hsiang equation}. Finding degree $3$ real algebraic minimal hypersurfaces in spheres is equivalent to solving the Euclidean nonradial Hsiang algebras over the real field. For the cubic case $k = 3$, $B$ must be a quadratic form. In \cite[p.~258, 265]{Hsiang67}, Wu-yi~Hsiang proposed classifying homogeneous cubic polynomial solutions on $\R{n}$ in the special case of \eqref{pre1laplace} where $B$ is a multiple of the \textit{Euclidean quadratic form} $h(x,x)$:
\begin{align}\label{1laplace}
|D u|^2\Delta u-\half{1}{2}\scal{Du}{D|Du|^2}=-\frac32\theta h(x,x)u(x), \qquad x\in \R{n}.
\end{align}
By rescaling $h$, the scalar factor $\theta$ can be normalized as one likes; the constant factor $-3/2$ is convenient later.
Solutions of \eqref{1laplace} were called \textit{radial eigencubics} in \cite{NTVbook}.
All known \textit{irreducible} examples of cubic solutions of the nonradial Hsiang equations are in fact radial eigencubics, and here we focus on the radial case.

By using invariant theory, Hsiang constructed four explicit examples of radial eigencubics for dimensions $n \in \{5,8,9, 15\}$ and posed the problem of classifying radial eigencubics.
In \cite[Chapter $6$]{NTVbook} steps were taken toward the solution of this problem. This was achieved by translating the problem into one of pure algebra, the classification of Euclidean radial Hsiang algebras. This classification problem is viable because for such algebras there is a Peirce type decomposition associated with an idempotent, like that use in the classification of Jordan algebras, although somewhat more complicated. As is recounted below, there is an infinite family of radial eigencubics, occurring in arbitrarily large dimensions, that are constructed from Clifford systems, and there are finitely many dimensions in which there occur radial eigencubics, here denominated \emph{exceptional}, not equivalent to ones obtained from Clifford systems. For the Hsiang algebras associated with exceptional radial eigencubics the trace-form, called here the \emph{Killing form}, constructed by tracing the product of two left multiplication endomorphisms, is invariant. This Killing invariance condition almost distinguishes the exceptional radial Hsiang algebras; only four of the Hsiang algebras built from Clifford systems, those here called \emph{mutants}, possess this property. The classification remains incomplete because the exceptional Hsiang algebras are not fully understood. Although they occur in finitely many dimensions no greater than $72$, existence and uniqueness are not fully determined for some of the possible dimensions.
An important feature of the exceptional eigencubics is that they appear in a variety of geometric and analytic problemas.
This makes their characterization an attractive and interesting problem. One of the goals of the present paper is to provide an alternative approach to understanding of the algebraic nature of radial eigencubics, and, especially, exceptional eigencubics.

A phenomenological observation is that all of the exceptional radial eigencubics occur in a dimension divisible by $3$ except for five of them constructed from the cubic isoparametric polynomials of Cartan and associated with the algebras here called \emph{eikonal} that were characterized by the second author in \cite{Tk14}.
This observation suggested the relevance of the triple algebra construction described in section \ref{sec:trip}. The triple algebra construction associates with a metrized commutative algebra of dimension $n$ a commutative algebra of dimension $3n$. It extends the Nahm algebra of a Lie algebra constructed by Kinyon and Sagle \cite{kinyon2002nahm} and a similar construction for commutative algebras described by the first author in \cite{Fox2009, Fox2015, Fox2021a}. It can be though of as something like the Cayley-Dickson process used to construct the octonions from the quaternions. From an analytic point of view it can be thought of as a systematic way of adding variables to solve a differential equation.

Theorem \ref{th:tuda} shows that the triple of an algebra is radial Hsiang (necessarily exceptional of dimension divisible by $3$) if and only if the original algebra is what here is called \emph{quasicomposition}. The quasicomposition algebras are a class of algebras that includes the well understood Hurwitz algebras (reals, complexes, quaternions, and octonions), the $3$ and $7$ dimensional cross-product algebras, the $6$-dimensional vector color algebra, some twisted forms of these, and conjecturally not much more. This class of algebras is interesting in its own right because it groups together these closely related and widely occurring algebras as examples of a single structure with a common definition.

\smallskip
Diagram \ref{diagram} groups the Hsiang algebras by their principal characteristic propeties and the comments below summarize the analytic and geometric structures and properties distinguishing these algebras.
%For an easier navigation between the principal types of algebraic (and corresponding analytic and geometric) structures  considered below,  we give the following diagram:

%% Define block styles
%\tikzstyle{decision} = [diamond, draw, fill=blue!20, text width=4.5em, text badly centered, node distance=3cm, inner sep=0pt]
%\tikzstyle{block} = [rectangle, draw, fill=blue!20, text width=5em, text centered, rounded corners, minimum height=4em]
%\tikzstyle{line} = [draw, -latex']
%\tikzstyle{cloud} = [draw, ellipse,fill=red!20, node distance=3cm, minimum height=2em]
%
%\begin{tikzpicture}[node distance = 2cm, auto]
%% Place nodes
%\node [block] (aa) {Radaial\\Hsiang};
%\node [block, below left of=aa,distance=1cm] (bb) {B};
%\node [block, below right of=aa] (cc) {C};
%\node [block, below of=aa] (dd) {D};
%\node [block, below of=dd] (ee) {E};
%\node [block, left of=ee, node distance=3cm] (ff) {F};
%\node [decision, below of=ee] (gg) {G};
%\node [block, below of=gg, node distance=3cm] (hh) {H};
%% Draw edges
%\path [line] (aa) -- (dd);
%\path [line] (dd) -- (ee);
%\path [line] (ee) -- (gg);
%\path [line] (gg) -| node [near start] {yes} (ff);
%\path [line] (ff) |- (dd);
%\path [line] (gg) -- node {no}(hh);
%\path [line,dashed] (bb) -- (aa);
%\path [line,dashed] (cc) -- (aa);
%\path [line,dashed] (cc) |- (ee);
%\end{tikzpicture}

\renewcommand{\figurename}{Diagram}
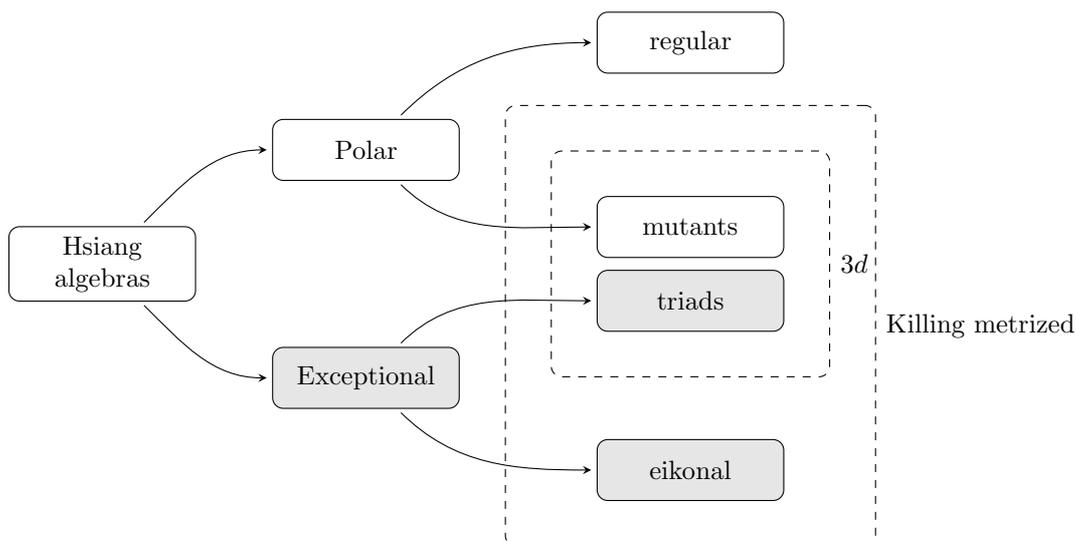
\begin{figure}[h]
\centering
\begin{tikzpicture}[>=stealth,font=\small,
force/.style={shape=rectangle,draw,rounded corners,text width=0.15\textwidth, minimum height=2.1em,text badly centered},
cloud/.style={shape=ellipse,draw,rounded corners,text width=0.15\textwidth, minimum height=2.1em,text badly centered},
every fit/.style={rectangle,rounded corners,draw,dashed,inner sep=1.55em}]

\node [force](Hs) {Hsiang algebras};
\node [force, above right =0.6 and 1 of Hs](polar) {Polar};
\node [force,fill={black!10!white},below right =0.6 and 1  of Hs] (exc) {Exceptional};

\node [force,above right =0.6 and 1.8 of polar] (reg) {regular};
\node [force,below right =0.2 and 1.8 of polar] (mut) {mutants};
\node [force,fill={black!10!white},above right =0.2 and 1.8 of exc] (eik1) {triads};
\node [force,fill={black!10!white},below right =0.4 and 1.8 of exc] (eik2) {eikonal};

\draw[->, shorten >=2pt, shorten <=2pt] (Hs) to[out=45,in=180] (polar);
\draw[->, shorten >=2pt, shorten <=2pt] (Hs) to[out=315,in=180] (exc);
\draw[->, shorten >=2pt, shorten <=2pt] (polar) to[out=45,in=180] (reg);
\draw[->, shorten >=2pt, shorten <=2pt] (polar) to[out=315,in=180] (mut);
\draw[->, shorten >=2pt, shorten <=2pt] (exc) to[out=45,in=180] (eik1);
\draw[->, shorten >=2pt, shorten <=2pt] (exc) to[out=315,in=180] (eik2);

%\draw [very thick,decorate,decoration={calligraphic brace,amplitude=10pt}] (eik1) -- (eik2);

\node [fit=(eik1) (mut),label=right:$3d$] (S) {};
%\node [fit=(eik1) (mut) (eik2),label=right:$3d$] {};
\node [fit=(S) (eik2), label=right:Killing metrized] {};

\end{tikzpicture}
\label{fig1}
\caption{Radial Hsiang algebras organized by their distinguishing properties}\label{diagram}%\textcolor{red}{The origin of the Hsiang species} (a joke)
\end{figure}

The following list summarizes the key features indicated in diagram \ref{diagram}. These are all explained in the main text.
\begin{itemize}
\item
\textbf{Polar}: constructed from symmetric Clifford systems, they carry a $\mathbb{Z}_2$-grading, have large singular locus, and there are infinitely many occurring in arbitrarily high dimensions.
\begin{itemize}
\item[$\diamond$]
\emph{Regular}: distinguished by uniqueness of the polar decomposition, they are not Killing metrized.
\item[$\diamond$]
\emph{Mutants}: only four examples built directly by tripling Hurwitz algebras, they admit several polar decompositions permuted by a canonical $S_{3}$ action and are Killing metrized.
%\marginpar{I do not think this $S_{3}$ action quite merits being called \emph{triality phenomenon}}
\end{itemize}
\item
\textbf{Exceptional}: occur in only finitely many dimensions, they have a hidden simple Jordan algebra structure, and are Killing metrized.
\begin{itemize}
\item[$\diamond$]
\emph{Eikonal}: only five examples associated with cubic isoparametric polynomials, they are only examples having trivial singular locus (smooth).
\item[$\diamond$]
\emph{Triads}: dimension divisible by three, triality-like properties, nontrivial singular locus. These are the least understood examples.
%\marginpar{I am not a big fan of the \emph{triads} terminology although I can't think of a good alternative (one maybe not so great possibility is \emph{three-divisible}}
\end{itemize}
\end{itemize}

\medskip

The paper is structured as follows. Section~\ref{sec:prelim} describes necessary background on algebraic minimal cones and the original Hsiang problem.
It also presents some particular solutions (radial eigencubics) that are revisited later in later sections. The general notion of a metrized algebra with involution is recalled in Section~\ref{sec:metr}, and the particular case of commutative metrized algebras and their description in terms of analytic properties of an associated cubic form are considered. With these tools in hand, in Section~\ref{sec:Hsiang} we reformulate the Hsiang problem in the metrized algebra language and recall some known results about polar and exceptional Hsiang algebras and their Peirce decompositions that are needed in subsequent sections. Theorem \ref{th:trace} characterizes Killing-metrized Hsiang algebras. Section~\ref{sec:Hurw} recalls the definitions of composition and cross-product algebras and introduces the notion of quasicomposition algebras and studies their basic properties, including a basic invariant called the defect. The tripling construction is described in Section~\ref{sec:trip} and there is proved the result relating quasicomposition algebras with a certain subclass of Hsiang algebras obtained via the tripling construction. It turns out that the Hsiang algebras obtained via tripling are always exceptional or near exceptional (of Hurwitz type). In Section~\ref{sec:Delta} their structural (Peirce) dimensions are identified and expressed in terms of the defects of the associated quasicomposition algebras.

\section{Hsiang eigencubics}\label{sec:prelim}

\subsection{Minimal algebraic cones and Hsiang equations}
%For simplicity, we always assume that all algebraic structures are considered over the field of real numbers.
%The definition \ref{def:Hsiang} of \textit{Hsiang algebras} is motivated by the search for algebraic mean curvature zero hypersurfaces in spheres.

%%%
%To make the exposition self-contained, some basic facts from \cite{NTVbook} are recalled, with some modifications to the terminology.

A smoothly immersed submanifold of a Riemannian manifold $M$ is said to be \textit{minimal} if its mean curvature is zero (in particular, the qualifier \emph{minimal} supposes no further variational restrictions on $M$).

%%% Definitions need to be checked carefully - specifically, the definitions of minimal, real algebraic, etc.
Let $\sphere^{n-1}$ be the unit sphere in Euclidean space $\rea^{n}$.
To $M \subset \sphere^{n-1}$ associate the cone $\cone(M) = \{tp: (p, t) \in M \times [0, \infty)\} \subset \rea^{n}$. A closed smoothly immersed submanifold $M \subset\sphere^{n-1}$ is minimal if and only if $\cone(M) \setminus \{0\}$ is an immersed minimal submanifold in $\rea^{n}$. A subset of $\rea^{n}$ is \textit{(real) algebraic} if it is the zero set of a polynomial, $u$. The nonsingular locus of a real algebraic set comprises the points where the differential $Du$ is nonzero. A subset $M \subset \sphere^{n-1}$ is \textit{algebraic (of degree $k$)} if the cone $\cone(M)$ is contained in the zero level set of a degree $k$ homogeneous polynomial, $u$, on $\rea^{n}$. A real algebraic subset $M \subset \sphere^{n-1}$ is \textit{minimal} if its nonsingular locus is minimal.

Here as in the rest of the paper, for expository simplicity, all algebraic structures are supposed defined over the real field, $\rea$, and real algebraic is abbreviated as algebraic.

For example, for $p \geq q$, the set of $p \times q$ matrices $X$ satisfying $\det X^{t}X = 0$, is a minimal algebraic submanifold of the sphere $\{X: \tr X^{t}X = 1\}$ \cite{Tk10c}, \cite{Bordemann-Choe-Hoppe}.

For sufficiently smooth $F:\rea^{n} \to \rea$ define a differential operator by
\begin{align}
\begin{aligned}
\minop(F) &= |D F|^{2}\lap F - \tfrac{1}{2}\lb D F, D|D F|^{2}\ra %=|D F|^{2}\lap F - (\hess F)(D F^{\sharp}, D F^{\sharp})
=F^{p}F_{p}F_{q}\,^{q} - F_{pq}F^{p}F^{q}.&
\end{aligned}
\end{align}
A straightforward calculation shows that at a regular point of a level set $F^{-1}(c)$, $c\in R{}$, the mean curvature equals $\tfrac{1}{n-1}|DF|^{-3}\minop(F)$.

If $F:\rea^{n} \to \rea$ is an irreducible homogeneous polynomial, it follows from some version of the real algebraic nullstellensatz (see \cite[Lemma $2.2$]{Perdomo2013}), that the cone $F^{-1}(0)$ is minimal if and only if there is a homogeneous polynomial $Q:\rea^{n} \to \rea$ such that $\minop(F) = QF$. This relation can be summarized schematically as
\begin{align}\label{hsianganalytic}
\text{the cone $F^{-1}(0)$ is minimal} \iff \minop(F) = 0 \,\,\text{mod}\,\,F.%\,\,\text{for all regular points of $F^{-1}(0)$}.
\end{align}
Because $\minop(GF) = G^{3}\minop(F) \,\,\text{mod}\,\,F$, when looking for polynomial solutions to $\minop(F) = QF$ it is reasonable to restrict attention to irreducible polynomials.
The homogeneous linear solutions of \eqref{hsianganalytic} are simply hyperplanes, which are cones over equators in $\sphere^{n-1}$.
If $F$ is homogeneous of degree $k \geq 2$ then $\minop(F)$ is homogeneous of degree $3k-4$, so $Q$ must be homogeneous of degree $2k-4$. In the case $k = 2$, $Q$ is a constant, and Hsiang showed in \cite{Hsiang67} that in this case any solution of \eqref{hsianganalytic} is equivalent to a multiple of
\begin{align}
&(q-1)(x_{1}^{2} + \dots x_{p}^{2}) - (p-1)(x_{p+1}^{2} + \dots + x_{p+q})^{2}, &&p, q\geq 2,\quad p+q = n.
\end{align}
The corresponding minimal hypersurfaces in $\sphere^{n-1}$ are known as Clifford tori.

Because of the equivariance property, $\minop(g\cdot F) = g\cdot \minop(F)$ for $g \in O(n)$ (where $(g\cdot F)(x) = F(g^{-1}x)$)), a special case of $\minop(F) = 0 \,\,(\text{mod}\,\, F)$ is obtained by requiring $\minop(F)/F$ to have some prespecified form, for example that $\minop(F)/F$ equal some \emph{orthogonally invariant} polynomial. When $k = 3$, then $Q$ must be a quadratic form. An orthogonally invariant quadratic form must be a multiple of the Euclidean quadratic form $h(x,x)$.

For cubic homogeneous polynomial solutions of \eqref{hsianganalytic} (the normalization factor is the same as in \eqref{1laplace} will be explained latter) there result two variants of the \emph{Hsiang} equations:
\begin{align}\label{nonradialhsianganalytic}
\minop(u) &= -\frac32 b(x,x)\,u && \text{nonradial Hsiang}\\
\label{radialhsianganalytic}
\minop(u) &= -\frac32\theta h(x,x)u && \text{radial Hsiang}
\end{align}
where $Q$ is an arbitrary (undetermined) quadratic form and $\la \in \rea$ is a constant. Here \emph{nonradial} means \emph{not necessarily radial}, and evidently any solution to the radial Hsiang equations solves also the nonradial Hsiang equations.

The equations \eqref{nonradialhsianganalytic}-\eqref{radialhsianganalytic} are called \emph{Hsiang} in honor of Wu-yi~Hsiang who first found interesting cubic solutions in \cite{Hsiang67}. More precisely, Hsiang observed that a homogeneous minimal hypersurface in a sphere is real algebraic and constructed cubic solutions to \eqref{radialhsianganalytic} to show that a minimal real algebraic hypersurface in a sphere need not be algebraic. He asked whether \eqref{hsianganalytic} admits irreducible homogeneous polynomial solutions of arbitrary degree and to classify cubic homogeneous polynomial solutions. It is this last question that is broached here.

\begin{definition}
Following the earlier work \cite{Tk10c} and \cite[Chap.~6]{NTVbook}, cubic solutions to the (radial) Hsiang equations are called \textit{(radial) eigencubics}.
Two radial eigencubics are \emph{equivalent} if they are on the same orbit of the action of the conformal group $CO(n)$.
\end{definition}

On $\rea^{3}$ any product of three linear forms solves \eqref{nonradialhsianganalytic}. Such a product solves \eqref{radialhsianganalytic} if the three forms are orthonormal or are all equal, but not in general. This shows that there are solutions of the nonradial Hsiang equations that are not radial.

Initially the radial case appears simply as a technical simplification that facilitates finding solutions.
It is not clear how big the gap between the nonradial and radial cases is. Comparison with the special case of pseudocomposition/eikonal algebras (see Example \ref{ex:cart0} below) suggests that it is not large. Moreover, every example we know of an \textit{irreducible} solution of the nonradial Hsiang equation in fact solves the radial Hsiang equation.

\subsection{Clifford type vs exceptional}
A cubic polynomial solution $u(x)$ of \eqref{1laplace} is called \emph{degenerate} if $u(x) = \om(x)^{3}$ for some homogeneous linear form $\om(x)$.
In \cite{Tk10b}, the second author proved that any cubic polynomial solution $u(x)$ of \eqref{1laplace} is either degenerate or it must be a harmonic function, $\Delta u(x)=0$. The latter condition simplifies considerably the original equation \eqref{1laplace}. In \cite{Peng}, C.K.~Peng and X.~Liang took an approach to the classification of radial eigencubics similar to that of \cite{Tk10b}, but under the a priori assumption that $u(x)$ be harmonic.

In \cite{Tk10c}, an infinite family of solutions to \eqref{1laplace} based on representations of Clifford algebras was constructed. More precisely, there is one-to-one correspondence between the (equivalence classes of) \textit{symmetric Clifford systems} \cite{FKM}
$A=(A_1,\ldots,A_q)\in \End(\R{2p})^q$ satisfying
% modulo what notion of equivalence?
\begin{equation}\label{AA}
%\{A=(A_1,\ldots,A_q)\in \End(\R{2p})^q:\,A_i^t=A_i, \, A_iA_j+A_jA_i=2I\delta_{ij},\quad \forall 1\le i,j\le q\}
A_i^t=A_i, \,\, A_iA_j+A_jA_i=2I\delta_{ij},\quad \forall 1\le i,j\le q,
\end{equation}
and radial eigencubics given explicitly by
\begin{equation}\label{uA}
u_{A}(x)=\sum_{k=1}^{q}x_{k+2p}\, \sum_{i,j=1}^{2p}A_{k,ij}x_ix_j, \qquad x\in \R{2p}\times \R{q}\cong \R{2p+q}.
\end{equation}
\begin{definition}
A radial eigencubic has \emph{Clifford type} if it is equivalent to one of the form \eqref{uA}.
\end{definition}

\begin{example}
The simplest example is the Lawson minimal cone \cite{Lawson} determined by
$$
u(x)=x_1x_2x_3+\frac{1}{2}(x_1^2-x_2^2)x_4, \quad x\in \R{4},
$$
for which the Clifford data is $A_1x_1+A_2x_2=\begin{pmatrix}x_1 & x_2 \\x_2 & -x_1\\\end{pmatrix}$.
\end{example}

It is well known \cite{Shapiro} that there exists a symmetric Clifford system \eqref{AA} if and only if $(q,p)\in \mathbb{N}^2$ satisfy
\begin{equation}\label{rhommm}
q-1\le \rho(p),
\end{equation}
where  the Hurwitz-Radon function $\rho$ is defined by $\rho(m)=8a+2^b$, if $m=2^{4a+b}\cdot \mathrm{odd}$, $0\leq b\le 3$.

%, and, conversely, for any $(q,p)\in \mathbb{N}^2$ satisfying \eqref{rhommm} there exists a Clifford system \eqref{AA}.

\begin{example}\label{ex:tri}
A \emph{composition algebra} is an algebra $(\Alg,\diamond)$ equipped with a nondegenerate quadratic form $n:\Alg\to \R{}$ that is multiplicative, meaning
\begin{align}\label{compos}
n(x\diamond y)=n(x)n(y).
\end{align}
A \emph{Hurwitz algebra} is a \textit{unital} composition algebra with unit $e$. It is equipped with the involution $\bar{x} = n(e, x)e - x$, where $n(x, y)$ is the polar form obtained by linearizing $n(x)$ (so $n(x, x) = 2n(x)$). By the Hurwitz theorem \cite{Albert46, SpringerVeldkamp}, a Hurwitz algebra over a field $\fie$ has dimension in $\{1, 2, 4, 8\}$ and is isomorphic to the base field $\fie$, a quadratic étale $\fie$-algebra, a quaternion $\fie$-algebra, or an octonion $\fie$-algebra. In particular a real Hurwitz algebra with positive definite quadratic form is isomorphic to the real field $\Hurw_1=\rea$, the complex field $\Hurw_2=\com$, the quaternions $\Hurw_4=\quat$, or the octonions $\Hurw_8=\cayley$. Note that there do exist nonunital composition algebras; the simplest example is the \emph{para-Hurwitz algebra} obtained by equipping a Hurwitz algebra $(\alg, \diamond)$ having $d \geq 2$ with the twisted multiplication $u\star v = \bar{u}\diamond\bar{v}$. Another interesting example of a nonunital composition algebra is the Okubo or pseudooctonion algebra, an $8$-dimensional nonassociative algebra originally introduced by Susumu Okubo in \cite{Okubo78}. See \cite{Elduque2020} for a survey.

An important subclass of Clifford type radial eigencubics is those defined by
\begin{align}\label{cartan}
u_{\Hurw_d}(x):=\re ((z_1z_2)z_3), \quad x=(z_1,z_2,z_3)\in \Hurw_{d}^{3} \simeq \rea^{3d},
\end{align}
%% If the Clifford data can be stated easily, it should be
where $\Hurw_d$ is a real Hurwitz algebra of dimension $d$ and $z_k=(x_{kd - (d-1)},\ldots, x_{kd})\in \R{d}\cong\Hurw_d$, $k\in\{1,2,3\}$. The parentheses are necessary in \eqref{cartan} because the octonions $\Hurw_{8}$ are not associative. Example~\ref{ex:trial} below gives a conceptually simple proof that these cubic forms are indeed eigencubics that is based on the triple algebra construction of Definition \ref{def:triple}.
\end{example}

\begin{definition}
A radial eigencubic is a \emph{Hurwitz eigencubic} if it is equivalent to one of the cubics \eqref{cartan}.
\end{definition}

%\begin{remark}\label{rem:cartan}
The Hurwitz eigencubics \eqref{cartan} appear in connection with the classical work of E.~Cartan \cite{Cartan38, Cartan39MZ} classifying isoparametric hypersurfaces in the spheres with three distinct principal curvatures \cite{CecilAnnals}.
More precisely, Cartan established that the cone over such an isoparametric hypersurface is the zero locus in $\R{3d+2} = \fie_{d}^{3}\times \rea^{2}$ of the cubic homogeneous polynomial (using the notation in \eqref{cartan})
\begin{equation}\label{CartanFormula0}
    \begin{split}
u_{J_d}(x) =x_{3d+2}^3&+\frac{3}{2}x_{3d+2}(|z_1|^2+|z_2|^2-2|z_3|^2-2x_{3d+1}^2)\\
&+\frac{3\sqrt{3}}{2}x_{3d+1}(|z_2|^2-|z_1|^2)+{3\sqrt{3}}\re ((z_1z_2)z_3),
\end{split}
\end{equation}
where $x=(z_1,z_2,z_3,x_{3d+1},x_{3d+2}) \in \fie_{d}^{3}\times \rea^{2}$. For $d \geq 1$, the cubic form \eqref{CartanFormula0} is the restriction to the subspace of trace-zero matrices of the generic determinant on the Jordan algebra $J_d$ of Hermitian $3\times3$-matrices over $\Hurw_{d}$. Retracting in a appropiate sense onto the subspace of matrices having zero diagonal yields the Hurwitz eigencubics \eqref{cartan}, which appear as the final term in \eqref{CartanFormula0}. Although the isoparametric description requires interpretation when $d = 0$, the formula \eqref{CartanFormula0} dos not, and it makes sense to consider the cubic polynomial $u_{J_d}$ for $d \in \{0, 1, 2, 3, 4\}$. In particular, it is harmonic, $\Delta u_{J_d}(x)=0$, and satisfies the Cartan-M\"unzner equation
\begin{equation}\label{Cartan2}
|D u_{J_d}(x)|^2=9|x|^4.
\end{equation}
From \eqref{Cartan2} it follows immediately by the Euler theorem for homogeneous functions that $u_{J_d}(x)$ satisfies \eqref{1laplace}, so is a radial eigencubic.
The cubic form \eqref{CartanFormula0} for $d=1$ was used in the construction of truly viscosity solutions to a fully nonlinear PDE in $\R{5}$ in \cite{NTV, NTVbook, NV2011a} (see also the appearance of $u_{J_d}$ in the context of integrable systems in \cite{Ferapontov}).
%\end{remark}

\begin{definition}
A radial eigencubic has \emph{Cartan type} if it is equivalent to one of the the \emph{Cartan eigencubics} $u_{J_d}$ in \eqref{CartanFormula0} for $d \in \{0, 1, 2, 3, 4\}$.
\end{definition}

\begin{proposition}
The Cartan eigencubics are \emph{not} of Clifford type.
\end{proposition}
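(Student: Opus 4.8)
The plan is to separate the two families by the dimension of the singular locus of the associated cone, which is preserved by $CO(n)$-equivalence. First I would recall that for a homogeneous polynomial $u$ of degree at least $2$ the singular locus of the cone $u^{-1}(0)$ coincides with the critical set $\{x\in\R{n}:Du(x)=0\}$, the Euler relation forcing $u$ to vanish wherever $Du$ does. Since $CO(n)$ acts on $\R{n}$ by invertible linear maps $x\mapsto cQx$ (with $c\in\rea^{\times}$ and $Q\in O(n)$) and $D(g\cdot u)(x)=(g^{-1})^{t}(Du)(g^{-1}x)$, the critical set of $g\cdot u$ is the linear image $g(\{Du=0\})$; in particular its dimension is an orbit invariant, unchanged also under rescaling $u$ by a nonzero scalar. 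So it is enough to show that for every $d\in\{0,1,2,3,4\}$ the critical set of $u_{J_{d}}$ has a different dimension from that of an arbitrary Clifford type eigencubic.

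For the Cartan eigencubics this is immediate from the Cartan--M\"unzner identity \eqref{Cartan2}: since $|Du_{J_{d}}(x)|^{2}=9|x|^{4}$, the gradient $Du_{J_{d}}$ vanishes only at the origin, so the singular locus of $u_{J_{d}}$ is $\{0\}$, of dimension $0$. For a Clifford type eigencubic I would write a point of $\R{2p+q}$ as $x=(x',x'')$ with $x'=(x_{1},\dots,x_{2p})$ and $x''=(x_{2p+1},\dots,x_{2p+q})$, so that $u_{A}(x)=\sum_{k=1}^{q}x_{k+2p}\scal{A_{k}x'}{x'}$ in the notation of \eqref{uA}, and then compute
\begin{equation*}
\frac{\partial u_{A}}{\partial x_{k+2p}}=\scal{A_{k}x'}{x'},\qquad \nabla_{x'}u_{A}=2\sum_{k=1}^{q}x_{k+2p}\,A_{k}x',
\end{equation*}
both of which vanish identically on the coordinate subspace $L=\{x'=0\}$. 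Hence $L\subseteq\{Du_{A}=0\}$, so the singular locus of $u_{A}$ has dimension at least $q\geq1$. (For $q=1$ the cubic $u_{A}$ is even reducible, but the same computation applies.)

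Putting the two computations together: every Clifford type eigencubic has a singular locus of positive dimension while every Cartan eigencubic has singular locus of dimension $0$, and since this dimension is invariant under the conformal equivalence, no Cartan eigencubic can be equivalent to a Clifford type eigencubic. There is no serious obstacle here; the only points requiring a little care are making explicit that the equivalence relation is realized by linear transformations — so that the dimension of the critical set is genuinely preserved — and the routine bookkeeping that $\{x'=0\}$ lies in the critical set of $u_{A}$ for all admissible pairs $(p,q)$, including the degenerate low-dimensional cases.
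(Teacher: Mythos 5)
Your proof is correct, but it separates the two families by a different invariant than the paper does. The paper's own argument also starts from the Cartan--M\"unzner identity \eqref{Cartan2}, but it compares \emph{polynomial degrees}: it observes that $|Du_{A}|^{2}$ has degree $2$ in the Clifford variables $x_{2p+1},\dots,x_{2p+q}$, whereas $|Du_{J_d}|^{2}=9|x|^{4}$ has degree $4$ in every variable in every orthogonal coordinate system, and these degree profiles are incompatible under the $O(n)$-action. You instead compare the \emph{critical sets}: $\{Du_{J_d}=0\}=\{0\}$ by \eqref{Cartan2} and positive definiteness of the norm, while $\{Du_{A}=0\}$ contains the $q$-dimensional subspace $\{x'=0\}$, and an invertible linear map cannot carry a positive-dimensional set onto a point. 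Your route is, if anything, more robust: it needs no discussion of how degrees in particular coordinates behave under orthogonal changes of variables (the one slightly delicate point in the paper's version), and in the end it only uses that a linear bijection preserves cardinality, not any real dimension theory. It is also exactly the invariant the paper itself deploys later in algebraic guise, via \eqref{grad}: the singular locus corresponds to the $2$-nilpotents, the subspace $\Alg_{0}$ of a polar algebra consists of $2$-nilpotents, and the eikonal/isoparametric algebras have none. Your explicit computation that $\{x'=0\}$ lies in the critical set of $u_{A}$ is the coordinate form of that remark. The only caveats are cosmetic: you could have skipped the reducibility aside for $q=1$, and you should note that ``dimension of the singular locus'' is overkill --- ``one point versus more than one point'' already finishes the argument.
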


\begin{proof}
The squared norm of the gradient $|Du(x)|^2$, $x\in \R{n}$ is invariant under the action of the orthogonal group $O(n)$. For a Clifford type eigencubic $u_{A}(x)$, the quartic polynomial $|Du_A(x)|^2$ has degree 2 as a polynomial in the variables $x_{2p+k}$, $1\le k\le d$, whereas, by \eqref{Cartan2}, $|D u_{J_d}(x)|^2=9|x|^4$ is a degree 4 polynomial in all its variables. Because $|x|^4$ is fixed by the action of the orthogonal group it has degree four in every variable in any orthogonal coordinate system so cannot be equivalent to $|Du_A(x)|^2$.
\end{proof}

This proposition suggests a natural dichotomy for radial eigencubics.

\begin{definition}
A radial eigencubic is \emph{exceptional} if it is not of Clifford type.
\end{definition}

This definition is by negation and so does not indicate how to determine the exceptionality of a concrete eigencubic. Theorem \ref{th:exc1} gives an effective characterization of a very slightly larger class of eigencubics.

\begin{theorem}[{\cite[Theorem~4]{Tk10b}}]\label{th:exc1}
The quadratic form $(\trace \hess u(x))^2$ is proportional to the Euclidean norm $|x|^2$ if and only if $u(x)$ is either an exceptional radial eigencubic or one of the four Hurwitz eigencubics.
\end{theorem}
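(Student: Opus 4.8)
The plan is to recognize the displayed quadratic form as the trace (Killing) form and to argue along the Clifford-type/exceptional dichotomy. Write $Q(x):=\trace((\hess u(x))^2)=|D^2u(x)|^2$; this is a quadratic form because $u$ is cubic. By the dichotomy of \cite{Tk10b} a non-degenerate radial eigencubic is harmonic, so $\Delta u=0$ (the degenerate cubics $\om(x)^3$ are reducible and satisfy neither alternative, so they fall outside the classification). For harmonic $u$ the Bochner identity $\tfrac12\Delta|Du|^2=|D^2u|^2+\langle Du,D\Delta u\rangle$ collapses to the clean formula $Q(x)=\tfrac12\Delta|Du(x)|^2$. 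Equivalently, in the metrized-algebra description of $u$ (with $\hess u(x)=L_x$ the operator of multiplication by $x$) one has $Q(x)=\trace(L_x^2)=\sum_i|e_i\circ x|^2$ for any orthonormal basis, so $Q$ is the trace form and the condition $Q\propto|x|^2$ says exactly that $u$ is \emph{Killing metrized}. Since $Q$ is built naturally from $u$, it is invariant under the stabilizer $G=\{g\in O(n):g\cdot u=u\}$; hence the symmetric operator $T$ defined by $Q(x)=\langle Tx,x\rangle$ commutes with $G$, and the whole condition is invariant under equivalence.

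First I would dispatch the Clifford-type case by direct computation. For $u=u_A$ built from a symmetric Clifford system \eqref{AA} (see \eqref{uA}), split coordinates as $x=(y,t)\in\R{2p}\times\R{q}$ and compute $|Du_A|^2=4|t|^2|y|^2+\sum_{k}\langle A_ky,y\rangle^2$. Taking the Laplacian and using $A_k^2=I$ and $\trace A_k=0$ gives $Q(x)=8q|y|^2+8p|t|^2$, so $Q\propto|x|^2$ if and only if $p=q$. By the Hurwitz--Radon constraint \eqref{rhommm} the equality $p=q$ can hold only for $q\in\{1,2,4,8\}$, and for these the Clifford-type eigencubic with $p=q=d$ is equivalent to the Hurwitz eigencubic $u_{\Hurw_d}$ of \eqref{cartan} (the mutant in dimension $3d$, by the mutant classification). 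Thus among Clifford-type eigencubics the trace condition selects precisely the four Hurwitz eigencubics.

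It remains to prove that every \emph{exceptional} eigencubic satisfies $Q\propto|x|^2$, and this is the main obstacle. For the isoparametric (eikonal/Cartan) exceptional eigencubics it is immediate: the Cartan--M\"unzner identity \eqref{Cartan2}, $|Du|^2=9|x|^4$, yields $Q=\tfrac12\Delta(9|x|^4)=\tfrac{9}{2}(4n+8)|x|^2$. The real difficulty is the remaining exceptional eigencubics (the triads), where $|Du|^2$ is not proportional to $|x|^4$ yet its Laplacian must be. Conceptually the mechanism is the Schur argument above: one wants to show that for an exceptional eigencubic $G$ admits no invariant orthogonal splitting of the type producing the two distinct eigenvalues $8q\ne 8p$ seen in the Clifford case, forcing $T$ to be scalar. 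Making this rigorous is where the structure theory of \cite{Tk10b,NTVbook} enters: via the Peirce decomposition of the Hsiang algebra at an idempotent one argues that the trace form is proportional to the metric unless the algebra is polar (Clifford type) with unbalanced Peirce dimensions. Granting this, the theorem follows, since a radial eigencubic is either exceptional---hence $Q\propto|x|^2$---or Clifford type, for which $Q\propto|x|^2$ holds exactly for the four Hurwitz eigencubics.
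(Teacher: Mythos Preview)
Your proposal is correct and tracks the paper's own argument closely. The paper does not prove Theorem~\ref{th:exc1} directly but recasts it as Theorem~\ref{th:trace} (a Hsiang algebra is Killing metrized iff it is exceptional or mutant) and proves that instead; your identification $Q(x)=\trace((\hess u(x))^2)=\trace L(x)^2=\kappa(x,x)$ is exactly this recasting. For the Clifford/polar direction the paper uses the explicit formula \eqref{traceP}, which is the algebraic counterpart of your Bochner computation $Q=\tfrac12\Delta|Du|^2$. Your direct reading that $Q=8q|y|^2+8p|t|^2\propto|x|^2$ forces $p=q$ is in fact slightly more streamlined than the paper's route: there one starts from the hypothesis that $\kappa$ is \emph{invariant} (not merely proportional to $h$) and combines \eqref{kappa1} with the polar identities \eqref{Vb}--\eqref{Vb1} applied to products $z_0z_1$ to extract the same conclusion $\dim\Alg_1=2\dim\Alg_0$. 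For the exceptional direction both you and the paper defer to the Peirce-decomposition structure theory: the paper cites \cite[Proposition~6.11.1]{NTVbook} without reproducing the argument, just as you indicate. Your separate direct verification for the Cartan isoparametric cubics via $|Du|^2=9|x|^4$ is a nice sanity check that the paper omits.
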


Theorem \ref{th:trace} below proves an algebraic claim equivalent to Theorem \ref{th:exc1}.

The Clifford type radial eigencubics form a well understood family with an effective classification due to the classical structural results about Clifford modules due to Atiyah-Bott-Shapiro \cite{ABS}, while the class of exceptional eigencubics is more subtle to characterize (the situation is somewhat analogous to that of exceptional Lie algebras). Despite progress described in \cite[Ch.~6]{NTVbook}, \cite{Tk10b, Tk19a, Tk18e}, the classification of exceptional eigencubics is not complete, although work in this direction is ongoing.
On the other hand, it has been established that exceptional eigencubics can exist only in \emph{finitely many dimensions} $n$ no greater than $72$ \cite[Theorem 6.11.2, p.~201]{NTVbook}.

Table~\ref{tabs} below summarizes the possible ambient dimensions $n$ together with the invariant Peirce dimensions $n_{1}$ and $n_{2}$ defined later in section~\ref{sec:Peirce} (see in particular \eqref{peircedimensions}). For most of these dimensions existence has already been analyzed. The first five entries with $n_2=0$ correspond to the Cartan eigencubics \eqref{CartanFormula0}.

\begin{small}
\def\MM{6mm}
\begin{table}[ht]
\renewcommand\arraystretch{1.5}
\noindent
\begin{flushright}
\begin{tabular}{p{6.5pt}|p{6.5pt}|p{6.5pt}|p{6.5pt}|p{6.5pt}|p{6.5pt}|
p{6.5pt}|p{6.5pt}|p{6.5pt}|p{6.5pt}|p{6.5pt}|p{6.5pt}|p{6.5pt}|p{6.5pt}|p{6.5pt}|p{6.5pt}|p{6.5pt}|p{6.5pt}|p{6.5pt}|p{6.5pt}|p{6.5pt}|
p{6.5pt}|p{6.5pt}|p{6.5pt}|p{6.5pt}}
%Ex. $\#$  &$1$ &$2$ &$3$ &$4$ &$5$ &$6$ &$7$ &$8$ &$9$ &$10$ &$11$ &$12$   &$13$ &$14$ &$15$ &$16$ &$17$ &$18$ &$19$ &$20$ &$21$ &$22$ &$23$ &$24$ \\\hline
$n$  &$2$ &  $5$ & $8$  & $14$  & $26$  &$9$ & $12$ & $\textcolor{black}{15}$ & $21$ & $15$ & $18$ & $\textcolor{black}{21}$ & $\textcolor{black}{24}$ &  $\textcolor{black}{30}$&  $\textcolor{black}{42}$& $27$ & $30$  & $\textcolor{black}{33}$  & $\textcolor{black}{36}$  & $\textcolor{black}{51}$ & $54$ & $\textcolor{black}{57}$ & $\textcolor{black}{60}$ & $\textcolor{black}{72}$      \\\hline
$n_1$& $1$ &$2$ & $3$  & $5$  & $9$  & $0$ & $1$  & $2$  & $4$  & $0$  & $1$  &  $2$ & $3$ &  $5$ & $9$  & $0$  & $1$   &  $2$  & $3$   & $0$  & $1$  & $2$ & $3$ & $7$ \\\hline
$n_2$&$0$ &  $0$ & $0$  & $0$  & $0$  &$5$ & $5$  & $5$  & $5$  & $8$  & $8$  &  $8$ &  $8$ &  $8$ & $8$  &$14$  & $14$  & $14$  & $14$  & $26$ & $26$ & $26$ & $26$ & $26$\\\hline
$d$&$-$ &  $-$ & $-$  & $-$  & $-$  &$1$ & $1$  & $1$  & $1$  & $2$  & $2$  &  $2$ &  $2$ &  $2$ & $2$  &$4$  & $4$  & $4$  & $4$  & $8$ & $8$ & $8$ & $8$ & $8$\\
%$n_3$& $1$ & $2$ & $4$  & $8$  & $16$  &$3$ & $5$  & $7$  & $11\,$ & $6$  & $8$  & $10\,$ & $12\,$ & $16\,\,\,$ & $24\,$ &$12\,$  & $14\,$  & $16\,\,$  & $18\,$  & $24\,$ & $26\,$ & $28\,$ & $30\,\,$ & $38\,\,$      \\\hline
\end{tabular}
\end{flushright}
\bigskip
\caption{The possible Peirce dimensions $(n_1,n_2)$ of exceptional algebras.}\label{tabs}
\end{table}
\end{small}

\begin{remark}
The dimensions $n_{1}$, $n_{2}$, and $n$ appearing in Table \ref{tabs} are \emph{potentially} realizable in the sense that there is not in hand a theorem precluding the existence of Hsiang algebras with these dimensions. They are obtained by using certain obstructions on an arbitrary Hsiang algebra; for a detailed explanation, the reader is referred to \cite{NTVbook}. On the other hand, a more delicate analysis given in the unpublished manuscript \cite{Tk16} shows, for example, that the Peirce dimensions $(n_1,n_2)\in \{(2,5), (2,8),\,(2,14),\,(3,8),\, (3,14),\, (7,14)\}$ are \emph{not realizable} in the sense that there do not exist Hsiang algebras with these Peirce dimensions. The search of an alternative approach for analyzing exceptional Hsiang eigencubics (Hsiang algebras) and the determination of realizable Peirce dimensions was among of the original motivations for the triple construction considered in the present paper.
\end{remark}

\begin{remark}
Theorem \ref{th:exc1} shows that the PDE $(\tr \hess u(x))^{2} = c|x|^{2}$ exhibits a certain rigidity in that it admits quite few homogeneous cubic polynomial solutions. This should be constrasted with the PDE
\begin{align}\label{killingpde}
|\hess u(x)|^{2} = c|x|^{2}
\end{align}
studied by the first author in \cite{Fox2021a} where it is shown to admit a wide array of infinite families of homogeneous cubic polynonial solutions. The PDE \eqref{killingpde} corresponds to the Killing metrizability of the metrized commutative algebra associated with $u$ in the sense defined in section \ref{sec:Kill}.
\end{remark}

\section{Metrized commutative algebras}\label{sec:metr}

The first systematic use of nonassociative algebra to approach the classification of cubic solutions of the Hsiang equations was made in \cite[Chap.~6]{NTVbook}. In \cite{Fox2009, Fox2015, Fox2021a} a similar algebraic approach was taken to finding cubic solutions of \eqref{killingpde}. Before describing (see Section~\ref{sec:Hsiang}) this approach we outline the basis notions of nonassociative algebra that are needed.

\subsection{The general case}
An \emph{algebra} is a vector space $\Alg$ equipped with a bilinear map $\diamond: \Alg \times \Alg \to \Alg$. The product $\diamond$ is \textit{not} required to be unital, commutative, or associative.

The left and right multiplication endomorphisms $L_{\diamond}, R_{\diamond}: \Alg \to \eno(\Alg)$ are defined by
$L_{\diamond}(x)y= x\diamond y$ and $R_{\diamond}(x)y= y\diamond x$, respectively. When it is obvious from context the subscript indicated the multiplication is omitted and there is written simply $L(x)$ or $R(x)$.

A emph{unit} of $\alg$ is an element $e\in \Alg$ such that $e\diamond x=x\diamond e=x$ for all $x\in \Alg$. An algebra is \emph{unital} if it has a unit, in which case the unit is unique.

An \emph{involution} $\sigma$ of an algebra $\Alg=(V,\diamond)$ is a linear endomorphism $\sigma\in \End(\Alg)$ satisfying $(x\diamond y)^\sigma=y^\sigma\diamond x^\sigma$ for all $x, y \in \Alg$ and squaring to the identity $\sigma\circ\sigma=1$. By definition, in an algebra with involution there hold
\begin{align}\label{RL}
R(x^{\si})=\sigma \circ L(x)\circ \sigma, \qquad
L(x^{\si})=\sigma \circ R(x)\circ\sigma.
\end{align}
In particular, for an algebra with involution, either of $L(x)$ and $R(x)$ determines the other.
The identity operator $\sigma=1$ on a commutative algebra and the negative of the identity $\sigma=-1$ on a anticommutative algebra are obviously involutions; these called the standard involutions. Note that there are both commutative and anticommutative algebras admitting nonstandard involutions and an algebra with involution need not be either commutative or anticommutative.

A bilinear form $h(x,y)$ on a vector space $\alg$ is symmetric if $h(x,y)=h(y,x)$ and a symmetric bilinear form $h$ is nondegenerate if its radical is trivial, in other words, $h(x,y)=0$ for any $y\in \alg$ implies $x=0$. A nondegenerate symmetric bilinear form is called a \emph{metric}. A metric is \emph{Euclidean} if it is positive definite.

\begin{definition}
A symmetric bilinear form $h(x,y)$ on an algebra $\Alg$ with involution $\sigma$ is \textit{involutively invariant} if
\begin{align}
h(x^\sigma,y^\sigma)&=h(x,y),\label{invol}\\
h(x\diamond y,z)&=h(x,z\diamond y^\sigma), \quad \forall x,y,z\in \Alg.\label{Qass}
\end{align}
\end{definition}
The condition \eqref{invol} states that $\si$ is $h$-orthogonal. Under condition \eqref{invol}, the relation \eqref{Qass} is equivalent to
\begin{align}\label{Qass1}
h(R(y)x, z) & = h(x\diamond y,z)=h(x,z\diamond y^\sigma) = h(x, R(y^{\sigma})z), \quad \forall x,y,z\in \Alg,
\end{align}
which states that the $h$-adjoint $R(y)^{\ast}$ equals $R(y^{\si})$. Likewise, \eqref{Qass1} together with \eqref{RL} implies that the $h$-adjoint $L(x)^{\ast}$ satisfies
\begin{align}\label{self}
L(x)^{\ast} =  L(x^{\si})=\sigma \circ R(x)\circ \sigma.
\end{align}

\begin{definition}
An algebra with involution is \emph{metrized} if it is equipped with an involutively invariant metric. A commutative algebra is said to be \emph{metrized} if it is equipped with a metric involutively invariant with respect to the standard involution.
\end{definition}

Metrized algebras with involution abound. Some interesting examples include:
\begin{itemize}
\item A semisimple Lie algebra with its standard involution is metrized by its Killing form $h(x,y)=\trace(\mathrm{ad}(x)\mathrm{ad}(y))$.
\item Many solvable and nilpotent Lie algebras with vanishing Killing form nonetheless admit invariant metrics. %reference?
%\item A semisimple (commutative) Jordan algebra is metrized by the trace form $h(x,y)=\trace L(x\diamond y)$.
\item A Hurwitz algebra (a unital composition algebra) is metrized by the trace form $h(x, y) = \tr L(\bar{x})L(y)$ where $x \to \bar{x}$ is the canonical involution.
\item The commutative Griess algebras of certain vertex operator algebras, in particular \emph{the} Griess algebra on which the Monster finite simple group acts by automorphisms, are naturally metrized, as are many related Norton and axial algebras \cite{Norton94}, \cite{HRS15}, \cite{Ivanov15}.
%References here need to be revised
\end{itemize}

\begin{example}[Jordan algebras]\label{ex:Jord}
Recall that a \emph{Jordan algebra} is a commutative algebra $(A,\diamond)$ satisfying the commutator relation
\begin{align}\label{jordan}
[L(x),L(x^2)]=0.
\end{align}
This identity can be thought as a weakening of the associative law.
In particular, any Jordan algebra is power-associative (the subalgebra generated by a single element  is always associative). On any semisimple Jordan algebra $(A,\diamond)$ the bilinear form
$$
h(x,y):=\trace L(x\diamond y),
$$
is nondegenerate and invariant \cite[p. 60]{Koecher}. In other words, $(A,\diamond)$ is naturally metrized by $h$.

The fundamental paper \cite{JordanNeumann} classified finite-dimensional semisimple Euclidean Jordan algebras and showed that any such is a direct sum of simple building blocks, namely, the spin factors (Jordan algebras satisfying a quadratic relation), Hermitian matrices of size $n \geq 3$ over a Hurwitz algebra $\Hurw_d$, for $d=1,2,4$, and the exceptional Albert algebra of $3\times 3$ Hermitian matrices over octonions $\Hurw_8$; in these cases the Jordan product is the symmetrization of the standard matrix product: $x\diamond y=\frac12(xy+yx)$. This classification is an essential ingredient in the classification of radial eigencubics because these have a certain hidden Jordan algebra structure (see the remark following \eqref{bullet1}).
\end{example}

\subsection{Commutative metrized algebras}
This section considers metrized commutative algebras in more detail. Let $(\alg,\diamond)$ be a commutative algebra equipped with the standard involution $\sigma=1$ metrized by $h$. In this case \eqref{invol} is trivial, while \eqref{Qass} is equivalent to the complete symmetry (invariance under permutations) of the trilinear form $t$ defined by
\begin{align}
\label{trilinear}
t(x,y,z):=h(x\diamond y, z).
\end{align}
This complete symmetric reflects an important property of the multiplication operator in a metrized commutative algebra $(\Alg,\diamond,h)$. Namely, by \eqref{self}, $L(x)$ is an $h$-\textit{self-adjoint operator}.

Associate with $\Alg$ the \textit{cubic form}
\begin{align}
\label{cubic}
u(x):=\frac16t(x,x,x):=\frac16h(x\diamond x, x).
\end{align}
In the converse direction, given a metric cubic space $(\alg, h, u)$, meaning a cubic form $u(x)$ on a vector space $\alg$ with a nondegenerate symmetric bilinear form $h$, a commutative algebra structure is determined as follows: given a pair $x,y\in \alg$, define $x\diamond y$ as the unique vector satisfying
\begin{align}
\label{trilinear1}
h(x\diamond y, z)=t_u(x,y,z), \qquad \forall z\in V,
\end{align}
where
\begin{align}\label{full3}
t_u(x,y,z):=u(x+y+z)-u(x+y)-u(x+z)-u(y+z)+u(x)+u(y)+u(z)
\end{align}
is the complete polarization of $u(x)$. The product $\diamond$ is evidently commutative, and the corresponding algebra $(\alg, \diamond,h)$ is metrized.

The correspondence between metrized commutative algebras $(\alg,\diamond, h)$  and cubic forms on an inner product vector space $(\alg,h)$ is functorial with respect to the natural notions of morphisms. In particular, isometrically isomorphic metrized commutative algebras correspond to isomorphic metric cubic spaces. This means that the study of metric cubic spaces is equivalent to the study of commutative metrized algebras.

This correspondence means that analytic calculus of cubic forms translates in a nonassociative algebra language as follows. Observe that
\begin{align}\label{u41}
\frac12t(x,x,y)=\partial_y u|_{x}
\end{align}
is the directional derivative of $u$ in the direction $y$ evaluated at $x$ which is a quadratic with respect to $x$ and linear with respect to $y$. On the other hand, the gradient of $u$ (with respect to the inner metric $h$ on $\Alg$) is defined as the unique vector $\nabla u(x)$ such that $\partial_y u|_{x}=h(\nabla u(x),\, y)$ holds for any $y\in \Alg$. Therefore the Euler homogeneity function theorem combined with \eqref{trilinear} implies
\begin{align}\label{grad}
\nabla u(x)=\frac12 x\diamond x,
\end{align}
and by linearization we obtain
\begin{align}\label{hess}
\hess u(x)(y, z)= h(L_{\diamond}(x)y, z).
\end{align}
The condition that $u$ be harmonic, $\Delta u=0$, is equivalent to the trace free condition
\begin{align}\label{exact}
\Delta u(x)=\trace L_{\diamond}(x)=0, \qquad \forall x\in \Alg.
\end{align}
Following \cite{Fox2021a}, algebras satisfying \eqref{exact} are called \emph{exact} (they were called \emph{harmonic} in \cite{NTVbook}).

The relations \eqref{hess} and \eqref{exact} determine the translation from the PDE setting to the nonassociative language, and vice versa. For example, by \eqref{grad}, the singular locus of the zero level set of $u$ comprises the $2$-nilpotent elements (those satisfying $x\diamond x=0$) in the associated metrized commutative algebra. As will be seen later the different types of radial eigencubics reflected in the Diagram \ref{diagram} correspond strongly with the structure of their singular loci.

Note that over an arbitrary field a metrized commutative algebra need not contain a nonzero idempotent. On the other hand, one can prove in a purely analytic manner that a Euclidean metrized commutative algebra always contains at least one nonzero idempotent (it cannot be guaranteed that there is more than one idempotent; an example is in \cite[section~3]{Tk18a}).

\begin{proposition}[\cite{Tk18a}]\label{pro:ide}
Any nonzero Euclidean metrized commutative algebra $(\Alg,\diamond,h)$ contains a nonzero idempotent.
\end{proposition}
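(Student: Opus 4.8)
The plan is to argue analytically, using the positive-definiteness of $h$ to obtain a compactness statement. First I would attach to $(\Alg,\diamond,h)$ its cubic form $u(x)=\tfrac16 h(x\diamond x,x)$ as in \eqref{cubic} and restrict $u$ to the unit sphere $S=\{x\in\Alg: h(x,x)=1\}$, which is compact precisely because $h$ is Euclidean. Since $u$ is continuous it attains its maximum on $S$ at some point $c$, and since $u$ is an odd function, $u(-x)=-u(x)$, and $-c\in S$, the maximal value satisfies $u(c)\ge 0$.

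Next I would apply the Lagrange multiplier rule at the constrained extremum $c$: there is a scalar $\mu\in\rea$ with $\nabla u(c)=\mu c$. By the dictionary \eqref{grad} between the cubic form and the product, $\nabla u(x)=\tfrac12 x\diamond x$, so this reads $c\diamond c=2\mu c$. Pairing with $c$ and using \eqref{cubic} gives $6u(c)=h(c\diamond c,c)=2\mu\, h(c,c)=2\mu$, hence $\mu=3u(c)\ge 0$. If $\mu>0$, then $e:=(2\mu)^{-1}c$ is nonzero and $e\diamond e=(2\mu)^{-2}(c\diamond c)=(2\mu)^{-2}(2\mu c)=(2\mu)^{-1}c=e$, so $e$ is the desired nonzero idempotent.

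It remains to dispose of the case $\mu=0$. Then $\max_S u=u(c)=0$, and by oddness $\min_S u=-\max_S u=0$, so $u$ vanishes identically on $S$, hence on all of $\Alg$ by homogeneity. Consequently the full polarization $t_u$ defined in \eqref{full3} vanishes identically, so by \eqref{trilinear1} we get $h(x\diamond y,z)=0$ for all $x,y,z$, and nondegeneracy of $h$ gives $x\diamond y=0$ identically; since a nonzero algebra is by definition one whose product does not vanish identically, this case does not occur. The only genuinely delicate point — the main obstacle — is this degenerate case $\mu=0$; everything else is the routine Lagrange-multiplier computation together with the identifications \eqref{grad} and \eqref{cubic}. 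It is worth noting that compactness of $S$, and hence the Euclidean hypothesis on $h$, is precisely what makes the argument work, consistent with the fact that over a general field a metrized commutative algebra need not possess any nonzero idempotent.
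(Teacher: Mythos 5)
Your proof is correct and follows essentially the same route as the paper: maximize the cubic form $u(x)=\tfrac16h(x\diamond x,x)$ on the compact unit sphere of the Euclidean metric, apply the Lagrange multiplier rule together with \eqref{grad}, and rescale the critical point to an idempotent. Your explicit treatment of the degenerate case $\mu=0$ (via oddness and polarization) is just a slightly more detailed version of the paper's one-line observation that $u$ not identically zero forces the maximum on the sphere to be positive.
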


\begin{proof}
Since $(\Alg,\diamond,h)$ is a nonzero algebra, its cubic form $u(x)=\frac16 h(x^2,x)$ is not identically zero. The sphere $S=\{x\in \Alg:\scal{x}{x}=1\}$ is compact in the Euclidean topology on $\Alg$, hence the cubic form $u(x)$ attains its maximum on $S$, say at $z\in S$. Because $u$ is not identically zero, $u(z)>0$. The Lagrange multiplier condition is $\nabla u(z)=k\nabla \scal{z}{z}=2kz$ for some $k\in \R{}$. On the other hand,  by the Euler homogeneous function theorem $u(z)=\frac13 \scal{\nabla u(z)}{z}=\frac43k\scal{z}{z}>0$ implying $k\ne0$. Using \eqref{grad} this yields $z\diamond z=4kz$, hence $c:=z/4k$ satisfies $c\diamond c=c$, that is $c$ is a nonzero idempotent in $\Alg$, as desired.
\end{proof}

%We finally mention a important property of the multiplication operator in a metrized commutative algebra $(\Alg,\diamond,h)$. Namely,
%\begin{align}\label{self}
%h(L_{\diamond}(x)y,z)=h(x\diamond y,z)=h( y,x\diamond z)=h(y,L_{\diamond}(x)z)\quad \Rightarrow\quad
%L_{\diamond}(x)^*=L_{\diamond}(x),
%\end{align}
%i.e. $L_{\diamond}(x)$ is an $h$-\textit{self-adjoint operator}.

\section{From Hsiang eigencubics to Hsiang algebras}\label{sec:Hsiang}

The correspondence between metrized cubic spaces and metrized commutative algebras described in section \ref{sec:metr} is use to translate the Hsiang equation into the definition of Hsiang algebras.

\subsection{Definition of Hsiang algebras}
In this section $\alg=\R{n}$ and $h(x,y)=\scal{x}{y}$ is the Euclidean scalar product. For readability, $L_{\diamond}(x)$ is written $L(x)$ and $\diamond$-powers of an element $x$ are written as juxtapositions,
$$
x^{2} = x\diamond x, \quad
x^{3} = x^{2}\diamond x = (x\diamond x)\diamond x,
\quad x^{2} x^{2} = (x\diamond x)\diamond(x\diamond x),\quad \text{ etc.}
$$
This notation is unambiguous because powers of order 2 and 3 are well defined in any commutative algebra.

Suppose that the cubic form $P(x) = \tfrac{1}{6}h(x^2, x)$ of the metrized commutative algebra $(\alg, \diamond, h)$ solves $\minop(P) = 0 \,\,(\text{mod}\,\, P)$. Rewriting this using \eqref{grad} and \eqref{hess} yields
\begin{align}
h(x^2, x^3 )- h(x^2, x^2)\tr L(x) = 0\,\,\text{mod}\,\, P.
\end{align}
If $P$ is irreducible it follows that there is a symmetric bilinear form $b(x, y)$ such that
\begin{align}\label{nonradialhsiang}
h(x^{2} , x^{3})  - h(x^{2} , x^{2} ) \tr L(x) = b(x, x)h(x, x^{2} ).
\end{align}
The following definition generalizes one from \cite{NTVbook}.
\begin{definition}\label{def:Hsiang}
A Euclidean metrized algebra $(\alg,\diamond,h)$ is a \emph{Hsiang algebra} if it satisfies \eqref{nonradialhsiang} for some symmetric bilinear form $b$ on $\Alg$. If there is a constant $\theta \in \rea$ such that $b = \theta h$, then $(\alg,\diamond,h)$ is said to be a \emph{radial Hsiang algebra}.
\end{definition}

The radial Hsiang algebras are characterized by the equation
\begin{align}\label{radialhsiang}
h(x^{2} , x^{3})  - h(x^{2} ,x^{2} ) \tr L(x) = \theta h(x, x) h(x, x^2).
\end{align}

\begin{remark}
Replacing $h$ by $r^{-1}h$ replaces $\theta$ by $r\theta$, so $\theta$ can be normalized as one likes by rescaling $h$. In many applications, the choice $\theta=4/3$ is natural. A radial Hsiang algebra satisfying $\theta=4/3$ is said to be \emph{normalized}.
\end{remark}

\begin{proposition}
A cubic form $u(x)$ solves \eqref{1laplace} if and only if the commutative metrized algebra $(\R{n},\diamond,\scal{}{})$ with $\diamond$ induced by $u$ as in \eqref{trilinear1} is a radial Hsiang algebra.
\end{proposition}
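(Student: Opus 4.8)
The plan is to check that the scalar partial differential identity \eqref{1laplace} is, term by term, the algebraic identity \eqref{radialhsiang} once everything is rewritten through the dictionary of Section~\ref{sec:metr}. Throughout, $h=\scal{}{}$ is the Euclidean metric, and $\diamond$ is the product induced by $u$ via \eqref{trilinear1}, so that $u(x)=\tfrac16 h(x^{2},x)$. First I would use \eqref{grad} to write $Du(x)=\nabla u(x)=\tfrac12\, x\diamond x=\tfrac12 x^{2}$, whence $|Du(x)|^{2}=\tfrac14 h(x^{2},x^{2})$, and \eqref{exact} (equivalently \eqref{hess}) to write $\Delta u(x)=\tr L(x)$. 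This already gives the first term of $\minop(u)$ as $|Du|^{2}\Delta u=\tfrac14 h(x^{2},x^{2})\,\tr L(x)$.

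The only computation needing a word of justification is $D|Du|^{2}$. Set $g(x):=|Du(x)|^{2}=\tfrac14 h(x^{2},x^{2})$. Differentiating in the direction $y$ and using commutativity, $\partial_{y}g|_{x}=h(x^{2},x\diamond y)$; by the total symmetry of the trilinear form $t(a,b,c)=h(a\diamond b,c)$ from \eqref{trilinear} (equivalently, by the $h$-self-adjointness \eqref{self} of $L$) this equals $h(x^{2}\diamond x,y)=h(x^{3},y)$, so $\nabla g(x)=x^{3}$. Hence $\tfrac12\scal{Du}{D|Du|^{2}}=\tfrac12 h(\tfrac12 x^{2},x^{3})=\tfrac14 h(x^{2},x^{3})$, and combining the two terms,
\[
\minop(u)(x)=-\tfrac14\bigl(h(x^{2},x^{3})-h(x^{2},x^{2})\,\tr L(x)\bigr).
\]
On the right-hand side of \eqref{1laplace}, using $u(x)=\tfrac16 h(x^{2},x)$ gives $-\tfrac32\theta\, h(x,x)u(x)=-\tfrac14\theta\, h(x,x)h(x,x^{2})$.

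Equating these two expressions and cancelling the common nonzero factor $-\tfrac14$ shows that \eqref{1laplace} holds identically in $x$ if and only if $h(x^{2},x^{3})-h(x^{2},x^{2})\tr L(x)=\theta\, h(x,x)h(x,x^{2})$ holds identically in $x$, which is exactly the defining relation \eqref{radialhsiang} of a radial Hsiang algebra; since every step is reversible, this is the desired equivalence. I do not expect a genuine obstacle: the entire content is the Section~\ref{sec:metr} dictionary, and the only points demanding care are the bookkeeping of numerical factors (the $\tfrac16$ in $u$, the $\tfrac12$ in $\nabla u$, and the normalization constant $-\tfrac32$ on the right of \eqref{1laplace}) and the single use of total symmetry of $t$ to identify $\nabla|Du|^{2}$ with $x^{3}$. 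It is worth remarking that, unlike the passage from \eqref{nonradialhsianganalytic} to Definition~\ref{def:Hsiang}, no irreducibility hypothesis on $u$ is needed here, since \eqref{1laplace} already posits the specific right-hand side $-\tfrac32\theta h(x,x)u$.
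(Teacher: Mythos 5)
Your proof is correct, and the constants all check out (indeed, this computation is precisely why the paper chose the normalization $-\tfrac32$ in \eqref{1laplace}). The paper treats the proposition as an immediate consequence of the same dictionary — it performs exactly this rewriting of $\minop(u)$ via \eqref{grad} and \eqref{hess} in the paragraph preceding Definition~\ref{def:Hsiang} — so your argument coincides with the paper's implicit proof.
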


If the cubic form of a Euclidean radial Hsiang algebra $(\alg, \diamond, h)$ equals the cube of a linear form, $\om(x)^{3}$, then the corresponding product $\diamond$ satisfies $x \diamond y = 6\om(x)\om(y)\alpha$ for the element $w\in \Alg$ uniquely determined by $\om(x) = h(x, w)$. In this case  $\tr L_{\diamond}(x) = 6h(w, w)\om(x)$. These conditions in fact characterize radial Hsiang algebras whose cubic forms are cubes of linear forms.
\begin{theorem}\label{th:degenerate}
For a Euclidean radial Hsiang algebra the following are equivalent.
\begin{enumerate}
\item It is not exact.
\item $\dim(\Alg\diamond \Alg) = 1$.
\item Its cubic form $u(x)$ equals the cube of a linear form.
\end{enumerate}
\end{theorem}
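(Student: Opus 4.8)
The plan is to prove the cyclic chain of implications $(3)\Rightarrow(2)\Rightarrow(1)\Rightarrow(3)$, which is the cleanest way to organize things given the remarks already recorded just above the statement.

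For $(3)\Rightarrow(2)$: suppose $u(x) = \om(x)^{3}$ with $\om$ a nonzero linear form, and let $w\in\Alg$ be the vector with $\om(x) = h(x,w)$. Linearizing $u$ as in \eqref{full3} and comparing with \eqref{trilinear1} gives $x\diamond y = 6\om(x)\om(y)\,w$ (the computation is routine: all three polarizations of $\om^{3}$ produce the same rank-one tensor). Hence $\Alg\diamond\Alg = \Sspan{w}$, which is one-dimensional since $w\neq 0$ (as $\om\not\equiv 0$). Along the way one also reads off $\tr L_{\diamond}(x) = 6h(w,w)\,\om(x)$, which is nonzero precisely because $h$ is Euclidean (so $h(w,w)>0$), giving $(2)\Rightarrow(1)$ for free, but I will instead get $(1)$ as the contrapositive of $(\text{not }1)\Rightarrow(\text{not }3)$ below, or more efficiently derive $(2)\Rightarrow(1)$ directly: if $\dim(\Alg\diamond\Alg)=1$ then $x\diamond x = \phi(x)\,v$ for a fixed nonzero $v$ and a quadratic form $\phi$; the cubic form is $u(x) = \tfrac16 h(x\diamond x,x) = \tfrac16\phi(x)h(v,x)$, and one checks $\tr L(x)$ cannot vanish identically — if it did, then by \eqref{hess} the Hessian $\hess u(x) = h(L(x)\cdot,\cdot)$ would be trace-free for all $x$, but $\hess u$ is (up to constants) the Hessian of $\phi(x)h(v,x)$, a product of a quadratic and a linear form whose Laplacian is a nonzero linear combination forcing a contradiction unless $u\equiv 0$, impossible since $\Alg\diamond\Alg\neq 0$.

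The substantive implication is $(1)\Rightarrow(3)$: a Euclidean radial Hsiang algebra that is \emph{not} exact has cubic form equal to the cube of a linear form. Here I would invoke Theorem~\ref{th:exc1} (equivalently the dichotomy from \cite{Tk10b} cited just before it): a cubic solution $u$ of \eqref{1laplace} is either \emph{degenerate}, i.e. $u = \om^{3}$, or it is harmonic, $\Delta u = 0$. By the Proposition identifying solutions of \eqref{1laplace} with radial Hsiang algebras, our $u = P$ is such a solution; by \eqref{exact}, harmonicity of $u$ is exactly exactness of the algebra. So not exact forces degenerate, which is $(3)$. This closes the cycle.

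The main obstacle is that $(1)\Rightarrow(3)$ is not really proved from scratch here — it rests on the degenerate/harmonic dichotomy of \cite{Tk10b}, which is a genuine theorem about cubic solutions of \eqref{1laplace} and not a formal manipulation. If one wanted a self-contained argument one would have to reproduce that dichotomy: assume $u$ solves \eqref{radialhsiang}, is non-exact so $\tr L(x) = \scal{x}{p}$ for some nonzero $p$ (as $x\mapsto\tr L(x)$ is linear), substitute into \eqref{radialhsiang}, and analyze the resulting polynomial identity $h(x^2,x^3) = h(x^2,x^2)\scal{x}{p} + \theta\scal{x}{x}h(x,x^2)$ to show the cubic form must collapse to a perfect cube — this is where the real work sits. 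For the purposes of this paper, however, citing Theorem~\ref{th:exc1} and \eqref{exact} suffices, and the remaining implications $(3)\Rightarrow(2)\Rightarrow(1)$ are the short rank-one computations sketched above.
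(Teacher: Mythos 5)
Your organization of the easy implications is fine, but there are two genuine problems. The most serious is that $(1)\Rightarrow(3)$ is not actually proved: the ``degenerate or harmonic'' dichotomy of \cite{Tk10b} that you invoke is, once translated through \eqref{exact}, word-for-word the implication $(1)\Rightarrow(3)$ itself, so citing it is circular rather than a reduction to something independently established. (Moreover, Theorem~\ref{th:exc1} is \emph{not} equivalent to that dichotomy --- it characterizes when $(\tr\hess u)^{2}$ is proportional to $|x|^{2}$, a different statement --- so your parenthetical ``equivalently'' is a mis-citation.) You flag the gap honestly, but flagging it does not close it. The paper omits the proof for exactly this reason and sketches a different route: by Proposition~\ref{pro:ide} a Euclidean metrized commutative algebra contains a nonzero idempotent; if the algebra is not exact, one sets $h(\ell,x)=\tr L(x)$ with $\ell\neq 0$ and uses \eqref{radialhsiang} to show that every idempotent is a multiple of $\ell$, from which one deduces $\Alg\diamond\Alg=\Sspan{\ell}$. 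A self-contained writeup must carry out an argument of that kind (or reproduce the analysis of \cite{Tk10b}); your proposal does neither.

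The second problem is the Laplacian argument for $(2)\Rightarrow(1)$: it is simply false that a product of a quadratic form and a linear form has nonvanishing Laplacian unless the product is zero --- $u=(x_{1}^{2}-x_{2}^{2})x_{3}$ is harmonic. What rescues the implication is the invariance of $h$, which you never use at this point: if $\Alg\diamond\Alg=\Sspan{v}$ then $x\diamond y=\beta(x,y)v$, and the complete symmetry of $h(x\diamond y,z)=\beta(x,y)h(v,z)$ forces $\beta(x,y)=c\,h(v,x)h(v,y)/h(v,v)$ with $c\neq 0$ (since $h$ is Euclidean and $\Alg\diamond\Alg\neq 0$); hence $\tr L(x)=c\,h(v,x)\not\equiv 0$ and, at the same stroke, $u$ is a multiple of $h(v,x)^{3}$, so $(2)$ yields both $(1)$ and $(3)$ with no PDE input at all. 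Your computation of $(3)\Rightarrow(2)$ from the polarization \eqref{full3} is correct as written.
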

The proof, which is more complicated than one might expect, is omitted (see however the proof in \cite[section~6.6.3]{NTVbook}). The key points are the following. First, by Proposition~\ref{pro:ide}, a Euclidean metrized commutative algebra contains a nonzero idempotent. Second, if a radial Hsiang algebra is not exact, there is a unique element $\ell$ such that $h(\ell, x) = \tr L(x)$ and it follows from \eqref{radialhsiang} that any idempotent is a multiple of $\ell$. From this it can be deduced that $\Alg\diamond \Alg$ is spanned by $\ell$.

\begin{definition}
A radial Hsiang algebra is \emph{degenerate} if it satisfies the equivalent conditions of Theorem \ref{th:degenerate} and \emph{nondegenerate} otherwise.
\end{definition}

Theorem \ref{th:degenerate} justifies the assertion in section \ref{sec:prelim} that  for a nondegenerate solution $u(x)$ of \eqref{1laplace}, there holds $\Delta u(x)=0=\trace L(x)$. By Theorem \ref{th:degenerate}, a nondegenerate radial Hsiang algebra is exact, and in this case \eqref{radialhsiang} specializes to the equations $\tr L(x) = 0$ and
\begin{align}
\label{Hsiang1}
h(x^2,\,x^3)=
\theta h(x,x)\,h(x^2,\, x). %, \qquad  \trace L(x)=0.
%-\frac{2\lambda}{3} h(x,x)\,h(x^2,\, x), \qquad  \trace L(x)=0.
 \end{align}
The linearization of \eqref{Hsiang1} yields the identity
\begin{align}
\label{Hsiang2}
4x^3x+ x^2x^2- 2\theta h(x,x) x^2- 3\theta h(x^2,\, x)x=0,
\end{align}
while via the invariance of $h$ the identity \eqref{Hsiang2} implies \eqref{Hsiang1}.

\begin{corollary}
A cubic form $u(x)$ is a nondegenerate solution of \eqref{1laplace} if and only if the commutative metrized algebra $(\R{n},\diamond,\scal{}{})$ with $\diamond$ induced by $u$ is an exact radial Hsiang algebra.
\end{corollary}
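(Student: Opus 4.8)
The plan is to obtain the Corollary as a formal consequence of the Proposition stated just before Theorem~\ref{th:degenerate} together with Theorem~\ref{th:degenerate} itself. Recall from Section~\ref{sec:prelim} that a cubic polynomial solution $u$ of \eqref{1laplace} is \emph{degenerate} exactly when $u(x)=\om(x)^{3}$ for some linear form $\om$, and \emph{nondegenerate} otherwise. The claim is that, under the dictionary of Section~\ref{sec:metr} between metric cubic spaces and metrized commutative algebras, nondegeneracy of $u$ corresponds precisely to exactness of the associated algebra.

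First I would invoke the Proposition to identify, bijectively, the cubic forms $u$ solving \eqref{1laplace} with the commutative metrized structures $(\R{n},\diamond,\scal{}{})$ that are radial Hsiang algebras; the identification is a genuine bijection because the correspondence recalled in Section~\ref{sec:metr} is functorial and is a bijection on objects over a fixed inner product space $(\alg,h)$. Under this identification $u$ and its algebra are related by $u(x)=\tfrac16 h(x\diamond x,x)$, so $u$ is the cube of a linear form if and only if the algebra's cubic form is the cube of a linear form, i.e.\ if and only if condition~(3) of Theorem~\ref{th:degenerate} holds. Then I would apply the equivalence (1)$\Leftrightarrow$(3) of Theorem~\ref{th:degenerate}: for a Euclidean radial Hsiang algebra, the cubic form is the cube of a linear form precisely when the algebra is not exact.

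Chaining these, $u$ is a nondegenerate solution of \eqref{1laplace} if and only if $(\R{n},\diamond,\scal{}{})$ is a radial Hsiang algebra whose cubic form is not a cube of a linear form, if and only if it is an exact radial Hsiang algebra; and once exactness $\tr L(x)=0$ is imposed, \eqref{radialhsiang} reduces to \eqref{Hsiang1}, the explicit radial Hsiang equation. This completes the proof.

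There is no real obstacle here: everything needed has already been established in the cited results. The only point requiring care is that the word ``degenerate'' is used in two a priori different senses — for a cubic solution of \eqref{1laplace} (a cube of a linear form) and for a radial Hsiang algebra (the equivalent conditions of Theorem~\ref{th:degenerate}) — and the Corollary is exactly the statement that these two senses match under the correspondence, which is guaranteed by the equivalence (1)$\Leftrightarrow$(3) in Theorem~\ref{th:degenerate}. A self-contained argument avoiding that citation would just reprove Theorem~\ref{th:degenerate} via Proposition~\ref{pro:ide} (a nonzero idempotent exists; non-exactness makes $x\mapsto\tr L(x)$ nonzero, represented by some $\ell\neq0$; \eqref{radialhsiang} then forces $\Alg\diamond\Alg=\Sspan{\ell}$ and hence $u=\om^{3}$), so citing it is the cleaner route.
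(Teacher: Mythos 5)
Your proof is correct and follows exactly the route the paper intends: the Corollary is an immediate consequence of the preceding Proposition (solutions of \eqref{1laplace} correspond to radial Hsiang algebras) combined with the equivalence (1)$\Leftrightarrow$(3) of Theorem~\ref{th:degenerate}, which matches nondegeneracy of $u$ (not a cube of a linear form) with exactness of the algebra. Your remark that ``degenerate'' is being used in two senses that the theorem reconciles is precisely the point, and no further argument is needed.
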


Let $c$ be a nonzero idempotent in a nondegenerate Euclidean radial Hsiang algebra. Substituting $c$ for $x$ in \eqref{Hsiang1} implies
\begin{align}
\label{length}
h(c,c)=\frac{1}{\theta}.
 \end{align}
In particular, \textit{all idempotents in a radial Hsiang algebra have the same length $\frac{1}{\sqrt{\theta}}$}.

\begin{remark}\label{rem:weak}
There are \emph{weak} versions of the equations \eqref{nonradialhsiang} and \eqref{radialhsiang} in which either of the symmetric bilinear forms $b$ or $h$ is allowed to be degenerate and interesting examples can be found. For simplicity such degenerate solutions are not discussed further here.
\end{remark}

\subsection{Polar algebras}
It is natural to identify the subclass of Hsiang algebras that corresponds with Clifford type eigencubics. This is explained next.

\begin{definition}\label{def:Clifford}
A \emph{polar algebra} is a metrized commutative algebra $(\Alg,\diamond, h)$ equipped with a nontrivial $h$-orthogonal decomposition $\Alg=\Alg_0\oplus_h \Alg_1$ (meaning that both $\Alg_0$ and $\Alg_1$ are nontrivial subspaces) such that
\begin{itemize}
\item[(i)] $\Alg_0\diamond \Alg_0=\{0\}$,
\item[(ii)] $\Alg_1\diamond \Alg_1\subset \Alg_0$,
\item[(iii)]
\begin{equation}\label{Vb}
x(xy)=h(x,x)y, \qquad \forall x\in \Alg_0,\,\forall y\in \Alg_1.
\end{equation}
\item[(iv)]
If $\dim \Alg_0=1$, then there is required additionally that $\trace L_\diamond(x)=0$ for all $x\in \Alg_0$.
\end{itemize}
\end{definition}

The following result is a corollary of the definitions (see \cite[Proposition~6.5.1]{NTVbook}).

\begin{theorem}\label{th:polar}
A polar algebra $(\Alg,\diamond, h)$ is a radial Hsiang algebra and its cubic form $u(x)=\frac16 h(x^2,x)$ is a Clifford type radial eigencubic satisfying \eqref{Hsiang1} with $\theta =4/3$. Conversely, if $u(x)$ is a Clifford type radial eigencubic satisfying \eqref{Hsiang1} with $\theta =4/3$, then the metrized algebra $(\R{n},\diamond,h)$ is isomorphic to a polar algebra. In this case there holds
\begin{align}\label{traceP}
\trace L_\diamond(x)^2=h(x_0,x_0)\dim \Alg_1+2h(x_1,x_1)\dim \Alg_0,
\end{align}
where $x={\xp}+{\xm}$ in accord with the decomposition $\Alg=\Alg_0\oplus \Alg_1$.
\end{theorem}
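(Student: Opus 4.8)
The plan is to prove the three assertions by direct computation in the block form induced by $\Alg=\Alg_0\oplus_h\Alg_1$. First I would record the multiplication table. Complete symmetry of the trilinear form $t(x,y,z)=h(x\diamond y,z)$ together with axioms (i), (ii) forces $\Alg_0\diamond\Alg_1\subseteq\Alg_1$ (pair $x_0\diamond y_1$ against $z_0\in\Alg_0$ and use $\Alg_0\diamond\Alg_0=0$), so that for $x=x_0+x_1$, with respect to the orthogonal splitting,
\[
L_\diamond(x)=\begin{pmatrix}0&\psi^{\ast}\\\psi&\chi\end{pmatrix},\qquad \psi:=L_\diamond(x_1)|_{\Alg_0}\colon\Alg_0\to\Alg_1,\quad \chi:=L_\diamond(x_0)|_{\Alg_1}\colon\Alg_1\to\Alg_1,
\]
where $\psi^{\ast}$ is the $h$-adjoint of $\psi$ (so $x_1\diamond y_1=\psi^{\ast}(y_1)$) and $\chi$ is $h$-self-adjoint by \eqref{self}. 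Choosing an $h$-orthonormal basis $e_1,\dots,e_m$ of $\Alg_0$ and setting $A_i:=L_\diamond(e_i)|_{\Alg_1}$, polarizing axiom (iii) gives the symmetric Clifford relations $A_iA_j+A_jA_i=2h(e_i,e_j)\id_{\Alg_1}$ of \eqref{AA}. When $\dim\Alg_0\ge2$ the identity $A_i=-A_jA_iA_j$ (any $j\ne i$) forces $\trace A_i=0$; when $\dim\Alg_0=1$ this is exactly axiom (iv). Hence $\trace L_\diamond(x)=\trace\chi=\sum_i(x_0)_i\trace A_i=0$, so every polar algebra is exact.

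Next I would verify \eqref{Hsiang1}. Since $\Alg_0\diamond\Alg_0=0$ one has $x^2=(x_1\diamond x_1)+2(x_0\diamond x_1)$ with $x_1\diamond x_1=\sum_i h(A_ix_1,x_1)e_i\in\Alg_0$, and iterating expresses the $\Alg_0$- and $\Alg_1$-components of $x^3$ through the $A_i$. The computational engine is that the Clifford relations plus self-adjointness of the $A_i$ yield $h(A_iA_jx_1,x_1)=h(A_ix_1,A_jx_1)=h(e_i,e_j)h(x_1,x_1)$ and $A_{x_0}^2=h(x_0,x_0)\id_{\Alg_1}$, where $A_{x_0}:=\sum_i(x_0)_iA_i$. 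Feeding these in, all $\Alg_0$–$\Alg_1$ cross-pairings vanish and the remaining terms collapse to $h(x^2,x^3)=\tfrac43\bigl(h(x_0,x_0)+h(x_1,x_1)\bigr)h(x^2,x)=\tfrac43 h(x,x)h(x^2,x)$, i.e. \eqref{Hsiang1} with $\theta=4/3$; combined with exactness this is \eqref{radialhsiang}, so the algebra is radial Hsiang. The same bookkeeping gives $u(x)=\tfrac16 h(x^2,x)=\tfrac12\sum_i(x_0)_i\,x_1^{\mathsf t}A_ix_1$, which has the shape \eqref{uA} (tracelessness of the $A_i$ forces $\dim\Alg_1$ even, as a genuine symmetric Clifford system requires), so $u$ is a Clifford type eigencubic.

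For the converse I would use the $O(n)$-part of the equivalence to reduce to $u=c\,u_A$ for a traceless symmetric Clifford system $A=(A_1,\dots,A_m)$ — tracelessness being forced, since otherwise $\Delta u_A=2\sum_i x_{2p+i}\trace A_i\ne0$ while $u_A$ is manifestly not a cube of a linear form, contradicting the dichotomy that a cubic solution of \eqref{1laplace} is degenerate or harmonic — and take $\Alg_0,\Alg_1$ to be the coordinate subspaces of $A$. Polarizing $u_A$ gives $t_{u_A}(a,b,c)=2\bigl(\langle A_{a_0}b_1,c_1\rangle+\langle A_{b_0}a_1,c_1\rangle+\langle A_{c_0}a_1,b_1\rangle\bigr)$, whence $(x\diamond_Ay)_0=2\sum_i\langle A_ix_1,y_1\rangle e_i$ and $(x\diamond_Ay)_1=2(A_{x_0}y_1+A_{y_0}x_1)$; axioms (i), (ii) are then immediate, and $x_0\diamond_A(x_0\diamond_Ay_1)=4A_{x_0}^2y_1=4h(x_0,x_0)y_1$. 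Replacing $\diamond_A$ by $c\diamond_A$ multiplies that constant by $c^2$ and makes \eqref{Hsiang1} hold with $\theta=\tfrac{16}{3}c^2$; so the hypothesis $\theta=4/3$ forces $c=1/2$, which is exactly the value making (iii) hold, while (iv) is inherited from tracelessness of $A$. Thus the given metrized algebra is isometrically isomorphic to a polar algebra.

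Finally, the trace identity is read off the block form: $L_\diamond(x)^2$ has diagonal blocks $\psi^{\ast}\psi$ and $\psi\psi^{\ast}+\chi^2$, so $\trace L_\diamond(x)^2=2\trace(\psi^{\ast}\psi)+\trace\chi^2$. From $\psi(y_0)=\sum_i(y_0)_iA_ix_1$ and $h(A_ix_1,A_jx_1)=h(e_i,e_j)h(x_1,x_1)$ one gets $\psi^{\ast}\psi=h(x_1,x_1)\id_{\Alg_0}$, hence $\trace(\psi^{\ast}\psi)=h(x_1,x_1)\dim\Alg_0$; and $\chi^2=A_{x_0}^2=h(x_0,x_0)\id_{\Alg_1}$, hence $\trace\chi^2=h(x_0,x_0)\dim\Alg_1$. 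Adding gives \eqref{traceP}. The one genuinely delicate point is the converse: bookkeeping the scalar $c$ so that axiom (iii) is matched exactly — equivalently, recognizing that the normalization $\theta=4/3$ is precisely what is encoded in Definition \ref{def:Clifford}(iii) — together with the observation that (iv) is the algebraic reflection of the harmonicity (tracelessness) of the Clifford system.
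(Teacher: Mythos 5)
Your argument is correct, and for the forward direction it is the paper's argument in different clothing: where the paper works intrinsically, deriving the dual identity \eqref{Vb1} from axiom (iii) via self-adjointness of $L_\diamond(y)^2$ and then expanding $h(x^2,x^3)$ in the components $x_0+x_1$, you fix an $h$-orthonormal basis of $\Alg_0$ and package the same information as a symmetric Clifford system $A_i=L_\diamond(e_i)|_{\Alg_1}$ satisfying \eqref{AA}; your relations $h(A_iA_jx_1,x_1)=\delta_{ij}h(x_1,x_1)$ and $\psi^{\ast}\psi=h(x_1,x_1)\id_{\Alg_0}$ are exactly \eqref{Vb1} in coordinates. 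The real difference is coverage: the paper's displayed proof establishes only the first identity of \eqref{Hsiang1} and dismisses \eqref{traceP} as ``similar,'' while saying nothing about the converse; you supply both. Your block-matrix computation of $\trace L_\diamond(x)^2$ is clean and correct, and your converse — polarizing $u_A$ to read off $\diamond_A$, then pinning down the scalar $c$ by matching $\theta=4/3$ against axiom (iii) — is a genuine addition; the normalization bookkeeping there ($c^2\cdot\tfrac{16}{3}=\tfrac43$, hence $c=\tfrac12$ making $x_0\diamond(x_0\diamond y_1)=h(x_0,x_0)y_1$ exact) is the one point where an error would be easy to make, and you get it right. Two small remarks: the tracelessness of the $A_i$ in the converse can be obtained more directly than via the degenerate-or-harmonic dichotomy, since a Clifford-type radial eigencubic satisfying both \eqref{radialhsiang} and \eqref{Hsiang1} forces $h(x^2,x^2)\trace L(x)=0$ identically and hence $\trace L(x)=0$ by density (your route through the dichotomy is nonetheless valid); and the factor $\tfrac12$ relating your $u$ to $u_A$ is absorbed by the $CO(n)$-equivalence, as you implicitly use, so the Clifford-type claim stands.
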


\begin{proof}
For completeness, the first identity in \eqref{Hsiang1} is proved (the traceless property is proved similarly).
By \eqref{Vb} for all $x\in \Alg_0$ and $y\in \Alg_1$,
\begin{align}\label{polarid1}
h(L(y)^2x,x)=h(y (y  x),x)=h(y,(y  x)  x)=h(y,y)h(x,x)=h(h(y,y)x,x).
\end{align}
By \eqref{self}, $L(y)^2$ is a self-adjoint operator, hence \eqref{polarid1} implies the restriction of $L(y)^2$ to $\Alg_0$ equals $h(y,y)1_{\Alg_0}$ on for any $y\in \Alg_1$. That is,
\begin{align}
\label{Vb1}
y  (y  x)=h(y,y)x, \quad \forall x\in \Alg_0, \,\forall y\in \Alg_1.
\end{align}
Decompose $x={\xp}+{\xm}$ according to $\Alg=\Alg_0\oplus \Alg_1$. Because $\alg_{0} \diamond\alg_{0} = \{0\}$, $x^2=\xm  \xm+2\xp  \xm,$ where $\xm  \xm\in V_0$ because $\alg_{1} \alg_{1} \subset \alg_{0}$. Since $h(V_0,V_1)=0$, using the invariance of $h$ yields
\begin{equation}\label{rhoClif}
h(x,x^2)=h(\xp, \xm  \xm)+2h(\xm,\xm  \xp)=3h(\xp,\xm  \xm).
\end{equation}
Because $\alg_{0} \alg_{0} = \{0\}$, $(\xm  \xm)  \xp=0$ and by \eqref{Vb1}
\begin{align*}
x^3&=2\xm (\xm \xp)+2\xp (\xp \xm)+(\xm  \xm)  \xm=2h(\xm,\xm)\xp+2h(\xp,\xp)\xm+(\xm  \xm)  \xm.
\end{align*}
Arguing similarly, one obtains
\begin{align*}
h(x^2,\,x^3)&
%&=\SCAL{{\xm  \xm}}{2h(\xm,\xm) {{\xp}}}+\SCAL{2{{\xp}}{{\xm}}}{ 2h(\xp,\xp){{\xm}}+{\xm^3}}\\
=2h(\xm,\xm)\SCAL{{\xm  \xm}}{{{\xp}}}+4h(\xp,\xp)\SCAL{{\xp}}{{\xm  \xm}}+2\SCAL{{\xm} {\xp}}{(x_1  x_1)  x_1}\\
&=4h(\xm,\mu)\SCAL{{\xm  \xm}}{{{\xp}}}+4h(\xp,\xp)\SCAL{{\xp}}{{\xm  \xm}}\\
&=\half{4}{3}h(x,x)\SCAL{x}{x^2},
\end{align*}
implying the first identity in \eqref{Hsiang1} with $\theta =4/3$. The proof of \eqref{traceP} is similar.
\end{proof}
\medskip

\begin{remark} \label{rem:polar}
By Theorem~\ref{th:polar}, the radial eigencubic $u(x)=\frac16 h(x^2,x)$ associated with a polar algebra $\Alg$ is of Clifford type with $q=\dim \Alg_0$ and $2p=\dim \Alg_1$, as follows readily from \eqref{rhoClif}.
By \eqref{rhommm} this implies
\begin{equation}\label{rho}
\dim \Alg_0-1\le \rho(\frac12 \dim \Alg_1).
\end{equation}
where $\rho$ is the Hurwitz-Radon function.
\end{remark}

\begin{remark}
The Clifford type radial eigencubics always have nontrivial singular locus. Precisely, by \eqref{grad}, since $x^2=0$ on $\Alg_0$, the subspace $\alg_{0}$ of a polar algebra is contained in the singular locus of the associated (Clifford type) radial eigencubic $u$.
\end{remark}

\begin{definition}
A Hsiang algebra is \textit{exceptional} if it is not (algebra) isomorphic  to a polar algebra.
\end{definition}

\begin{example}\label{ex:cart0}
The simplest examples of exceptional Hsiang algebras are given by pseudocomposition algebras. A commutative algebra $(\Alg,\diamond, h)$ is called a \textit{pseudocomposition algebra} \cite{Meyberg1} if it satisfies the identity
\begin{align}
\label{pseudo}
x^3=b(x, x)x
\end{align}
for some  nonzero symmetric bilinear form $b$ on $\alg$. If a metrized commutative algebra satisfes \eqref{pseudo}, then $h(x^3, \, x^2)=b(x, x)h(x,x^2)$, so that \eqref{nonradialhsiang} holds provided the algebra is moreover exact. This shows that \textit{a metrized exact pseudocomposition algebra is a nonradial Hsiang algebra}.
\end{example}

Pseudocomposition algebras naturally arise  in the context of the eiconal equation and isoparametric hypersurfaces, see \cite{Tk14}. This motivates the following

\begin{definition}
A pseudocomposition algebra is an \emph{eikonal} algebra if it satisfies \eqref{pseudo} with $b = \theta h$ for some nonzero $\theta \in \rea$ and a Euclidean metric $h$.
\end{definition}

By a theorem of Elduque and Okubo \cite[Theorem $12$]{ElOkubo}, over a field of characteristic not dividing $6$ the bilinear form $b$ of a pseudocomposition algebra is automatically invariant. By the theorem of Elduque and Okubo, for an eikonal algebra $h$ is invariant and there holds
\begin{align}\label{eikonal}
h(x^3, \, x^2)=\theta h(x, x)h(x,x^2),
\end{align}
so that \eqref{radialhsiang} holds provided the eikonal algebra is moreover exact. Therefore \emph{an exact eikonal algebra is a radial Hsiang algebra} (by the preceding remarks, in this case metrizability need not be assumed).

Note that the distinction between pseudocomposition and eikonal algebras is formally parallel to (in fact a subcase of) the distinction between nonradial Hsiang algebras and radial Hsiang algebras.

\begin{lemma}
An exact eikonal algebra with Euclidean metric is an exceptional Hsiang algebra.
\end{lemma}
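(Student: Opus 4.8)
The plan is first to reduce the statement to a non-isomorphism assertion: by the remarks immediately preceding the lemma, an exact eikonal algebra $(\Alg,\diamond,h)$ is a radial Hsiang algebra, hence a Hsiang algebra, so it remains only to show that it is not algebra-isomorphic to any polar algebra. The mechanism I would use to separate the two classes is the set of $2$-nilpotent elements (those $x$ with $x\diamond x=0$), which is a purely multiplicative invariant and therefore preserved by algebra isomorphisms; this is the right tool precisely because exceptionality is defined through algebra isomorphism rather than isometric isomorphism, so the metric $h$ should play only an auxiliary role.

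The first step is to show that a Euclidean eikonal algebra has no nonzero $2$-nilpotent. If $x\diamond x=0$ then $x^{3}=(x\diamond x)\diamond x=0$; on the other hand the eikonal identity $x^{3}=\theta h(x,x)x$ with $\theta\neq 0$ then forces $h(x,x)x=0$, and positive-definiteness of $h$ yields $x=0$. The second step is the complementary observation that every polar algebra has a nonzero $2$-nilpotent: in a polar algebra $\Alg=\Alg_{0}\oplus_{h}\Alg_{1}$ the decomposition is by definition nontrivial, so $\Alg_{0}\neq\{0\}$, and property (i) of Definition~\ref{def:Clifford} gives $x_{0}\diamond x_{0}=0$ for every $x_{0}\in\Alg_{0}$; any nonzero such $x_{0}$ is the desired element.

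The third and final step is to combine these: an algebra isomorphism $\phi$ satisfies $\phi(x)\diamond\phi(x)=\phi(x\diamond x)$, so it carries $2$-nilpotents bijectively onto $2$-nilpotents; hence an algebra whose only $2$-nilpotent element is $0$ cannot be isomorphic to one possessing a nonzero $2$-nilpotent. Therefore an exact eikonal algebra with Euclidean metric is not isomorphic to a polar algebra, i.e.\ it is an exceptional Hsiang algebra. I do not anticipate a genuine obstacle; the only point demanding a little care is to keep the argument purely algebraic (working with the product, not with $h$) so that it matches the definition of \emph{exceptional} as stated, the Euclidean hypothesis entering only to collapse the $2$-nilpotent cone of the eikonal algebra to $\{0\}$.
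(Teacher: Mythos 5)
Your argument is correct and is essentially the paper's own proof: the paper likewise derives $0=x^3=\theta h(x,x)x$ for $x$ in the degree-zero part of a putative polar decomposition and concludes $\Alg_0=\{0\}$ from positive definiteness, a contradiction. Your only addition is to make explicit that the set of $2$-nilpotents is an algebra-isomorphism invariant, which is a slightly more careful packaging of the same idea rather than a different route.
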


\begin{proof}
Were the algebra of Clifford type there would exist a polar decomposition $\Alg=\Alg_0\oplus \Alg_1$ with subspaces $\alg_0$ and $\alg_1$ having positive dimension. For any $x\in \Alg_0$ there would hold $x^2=0$, so that $0=x^3=\theta h(x,x)x$. Because $\theta \neq 0$ and $h$ is Euclidean this implies $x =0$. This implies that $\Alg_0=\{0\}$ is trivial, a contradiction.
\end{proof}
The simplest example of an exact pseudocomposition algebra is the paracomplex numbers defined in Example \ref{ex:tri} (the paraquaternions do not give an example because they are not commutative).

Because by \eqref{eikonal} an exact eikonal algebra contains no nonzero $2$-nilpotent element, by \eqref{grad} the associated radial eigencubic has no singular locus, so is smooth.

\subsection{Types of polar algebras}\label{sec:reg}
%\marginpar{NEW!}

By definition, a polar algebra is determined by a decomposition $\Alg=\Alg_0\oplus \Alg_1$. It is natural to ask whether this polar decomposition is unique in the sense that for any other polar decomposition $\Alg=\mathbb{B}_0\oplus \mathbb{B}_1$ satisfying Definition~\ref{def:Clifford} there must hold $\Alg_i=\mathbb{B}_i$, $i=0,1$.
Because the $0$-graded part of the polar decomposition is contained in the singular locus of the associated radial eigencubic $u(x)=h(x,x^2)$, this question is related to understanding the structure of the singular locus of the associated minimal cone $u(x) = 0$.

It follows from the theory of Clifford systems (or the well-known properties of the Radon-Hurwitz function $\rho$ and \eqref{rhommm}) that $\dim \alg_{1} = 2\dim \alg_{0}$ implies that $\dim \alg_{0} \in \{1, 2, 4, 8\}$. Such a polar algebra must be the algebra corresponding with one of the Hurwitz eigencubics defined in Example \ref{ex:tri}. See Corollary \ref{cor:polar} for the corresponding algebraic statement.

It follows from \cite[section~6.5.3]{NTVbook} that the uniqueness of the polar decomposition depends on the polar dimensions $(p,q)=(\dim\Alg_0,\dim\Alg_1)$. Namely, if $(p,q)\notin \{(d, 2d): d=1,2,4,8\}$ then the polar decomposition is unique, while for the remaining four pairs (Hurwitz triples), there are exactly $6=3!$ different polar decompositions (the symmetric group $S_3$ acts naturally permuting these decompositions).

The preceding motivates Definition \ref{def:mutant} and justifies that it is well made.

\begin{definition}\label{def:mutant}
A polar algebra is a \emph{mutant} if it admits a polar decomposition so that $\dim \alg_{1} = 2\dim \alg_{0}$ and is \emph{regular} otherwise.
\end{definition}

The structure of the (uniquely determined) subspace $\alg_{0}$ of a regular polar algebra is related to the singular locus of the associated radial eigencubic $u(x)$. From this point of view, polar algebras are naturally partitioned into several classes:
\begin{itemize}
\item
\emph{mutants} with $(\dim \alg_{0},\dim \alg_{1})\in \{(d, 2d): d=1,2,4,8\}$ and having radial eigencubic $u(x)$ equivalent to one of the four Hurwiz eigencubics.
\item
\emph{regular} polar algebras.
\begin{itemize}
\item It can be shown that the singular locus of $u(x)$ equals $\alg_{0}$ if and only if $\dim \alg_{0} < 2\dim \alg_{1}$.
\item When $\dim \alg_{0} > 2\dim \alg_{1}$, then the singular locus of $u(x)$ contains $\alg_{0}$ as a proper subset. Its precise structure is more complicated and will be described elsewhere.
\end{itemize}
\end{itemize}

That a mutant polar algebra admits several polar decompositions permuted by an $S_{3}$ action is a kind of triality phenomenon that can be explained by the triple algebra construction; see Corollary~\ref{cor:polar} below.

%Precisely, it can be shown that the singular locus of $u(x)$ equals $\alg_{0}$ if and only if $\dim \alg_{0} < 2\dim \alg_{1}$.

%The corresponding polar decomposition has the additional structural property that a given polar decomposition $\alg = \alg_{0} \oplus \alg_{1}$ admits a further decomposition $\alg_{1} = \alg_{1}^{+} \oplus \alg_{1}^{-}$ satisfying
%\begin{equation}\label{pm}
%\begin{split}
%&\Alg_0\cong \Alg_1^{+}\cong\Alg_1^+\cong \Hurw_d \quad \text{(as vector subspaces)},\\
%&\Alg_1= \Alg_1^-\oplus \Alg_1^+,\\
%&\Alg_1^-\diamond \Alg_1^+=0.
%\end{split}
%\end{equation}
%This structure means that the mutants can be associated with $S_{3}$-symmetries of the $D_4$ Dynkin diagram:
%
%\begin{center}
%\begin{tikzpicture}[every node/.style={fill=white},scale=0.7] % white fill keeps lines from running into labels
%\draw (0,0) -- (2,0);	
%\draw (0,0) -- (-1,1.73);
%\draw (0,0) -- (-1,-1.73);
%
%	\node at (0,0) {$0$};
%	\node at (2,0) {$\Alg_0$};
%	\node at (-1,1.73) {$\Alg_1^+$};
%	\node at (-1,-1.73) {$\Alg_1^-$};
%%\node at (8,0) {};
%\end{tikzpicture}
%\end{center}

\subsection{Killing-metrized algebras}\label{sec:Kill}
Exceptional eigencubics share many properties with the mutant subclass of Clifford type radial eigencubics consisting of the four Hurwitz eigencubics, defined in Example~\ref{ex:tri}.
%It is convenient to study the exceptional eigencubics together with the Hurwitz eigencubics.
They are naturally united by the notion of Killing metrizability \cite{Fox2022}.

\begin{definition}
Given a metrized commutative algebra $(\Alg,\diamond,h)$, the symmetric bilinear form
$$
\kappa(x,y):=\trace L(x)L(y)
$$
is called the \textit{Killing form} of $(\Alg,\diamond,h)$. A commutative algebra $(\Alg,\diamond,h)$ is \textit{Killing metrized} if its Killing form $\kappa$ is nondegenerate and invariant.
\end{definition}

In general it is interesting that a commutative algebra admits an invariant metric.
It is a stronger condition that it be metrized by a specific trace-form, as such a form is determined by the underlying algebraic structure and so is not additional data. For an exact commutative algebra the only possibility is the Killing type form, for any nontrivial bilinear form constructed from traces of the multiplication endomorphisms and the multiplication itself must be a multiple of the Killing form.

Theorem~\ref{th:exc1} is clarified by Theorem \ref{th:trace} which is its reformulation in terms of Hsiang algebras.

\begin{theorem}\label{th:trace}
A Hsiang algebra $(\Alg,\diamond,h)$ is Killing metrized if and only if it is exceptional or mutant.
\end{theorem}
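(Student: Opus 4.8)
The plan is to reduce the statement to Theorem \ref{th:exc1} via the dictionary between radial Hsiang algebras and radial eigencubics, and then to handle the degenerate and polar cases separately. First I would observe that, by \eqref{hess}, the Hessian of the cubic form $u(x) = \tfrac16 h(x^2,x)$ is $\hess u(x)(y,z) = h(L(x)y,z)$, so that as an operator $\hess u(x) = L(x)$ relative to $h$; consequently $\trace \hess u(x) = \trace L(x)$ and $(\trace \hess u(x))^2 = (\trace L(x))^2$. For the quantity appearing in Theorem \ref{th:exc1} to be a multiple of $|x|^2 = h(x,x)$ is then exactly the condition that $x \mapsto (\trace L(x))^2$ be proportional to $h$. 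If the Hsiang algebra is not exact, then by Theorem \ref{th:degenerate} it is degenerate, its cubic form is a cube of a linear form $\om(x)^3$, and $\trace L(x) = 6h(w,w)\om(x)$; then $(\trace L(x))^2 = 36 h(w,w)^2 \om(x)^2$ is a rank-one form, not proportional to the full-rank Euclidean form unless $\dim \Alg = 1$. So a degenerate Hsiang algebra is Killing metrized only in the trivial one-dimensional case; I would note this falls outside both alternatives in a suitable reading, or simply restrict attention to nondegenerate (equivalently exact) Hsiang algebras where $\trace L(x) \equiv 0$, which is the case of genuine interest.

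So assume the Hsiang algebra is exact, $\trace L(x) = 0$. The key identity is the polarization of $\trace L(x)^2 = \kappa(x,x)$: one has $\kappa(x,x) = \trace L(x)^2$, and I would compute, using the invariance of $h$ and the definition of the cubic form, the second trace $\trace L(x^2)$ in terms of $|x|^2$ and $h(x,x^2)$. Here the Hsiang relation \eqref{Hsiang1}, or rather its linearization \eqref{Hsiang2}, enters: contracting \eqref{Hsiang2} against suitable endomorphisms and taking traces produces a relation expressing $\kappa(x,x) = \trace L(x)^2$ as $a |x|^2$ plus a term that is a perfect square of the linear-in-$x$ functional only when certain structural identities hold. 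Concretely, Theorem \ref{th:exc1} says $(\trace \hess u(x))^2 \propto |x|^2$ precisely for exceptional eigencubics and the four Hurwitz eigencubics; but that statement is about $\trace L(x)$, which is identically zero in the exact case, so the correct translation is more subtle — one must instead track $\trace L(x)^2$ itself rather than $(\trace L(x))^2$. I would therefore invoke the actual content behind Theorem \ref{th:exc1}: the proof in \cite{Tk10b} shows that the quadratic form $\kappa(x,x) = \trace L(x)^2$ is proportional to $|x|^2$ exactly for the exceptional and Hurwitz-eigencubic cases, and for polar (Clifford type) eigencubics \eqref{traceP} shows $\kappa(x,x) = h(x_0,x_0)\dim\Alg_1 + 2h(x_1,x_1)\dim\Alg_0$, which is proportional to $|x|^2$ if and only if $\dim\Alg_1 = 2\dim\Alg_0$, i.e. precisely in the mutant case.

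This gives the equivalence: $\kappa$ is proportional to $h$ iff the algebra is exceptional or mutant. It remains to upgrade ``proportional to $h$'' to ``Killing metrized'', i.e. to ``$\kappa$ nondegenerate and invariant''. Nondegeneracy is immediate once $\kappa = \lambda h$ with $\lambda \neq 0$, since $h$ is Euclidean hence nondegenerate; that $\lambda \neq 0$ follows because $\kappa(c,c) = \trace L(c)^2 \geq \|L(c)\|^2 > 0$ for a nonzero idempotent $c$ (which exists by Proposition \ref{pro:ide}), the operator $L(c)$ being self-adjoint with $L(c)c = c \neq 0$. Invariance of $\kappa$ then follows from $\kappa = \lambda h$ and invariance of $h$. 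Conversely, if the algebra is Killing metrized then $\kappa$ is an invariant metric, and on an exact commutative algebra any invariant bilinear form built from the multiplication — in particular $\kappa$ — is forced to be a scalar multiple of any other such form; combined with \eqref{traceP} in the polar case and Theorem \ref{th:exc1} in the exceptional case, this pins down the mutant/exceptional dichotomy. I expect the main obstacle to be the precise bookkeeping in the exact case showing that $\trace L(x)^2$, not $(\trace L(x))^2$, is the quantity governed by Theorem \ref{th:exc1}, and verifying that for a non-mutant polar algebra the form in \eqref{traceP} genuinely fails to be proportional to $h$ — this is where the constraint $\dim\Alg_1 = 2\dim\Alg_0$, and hence $\dim\Alg_0 \in \{1,2,4,8\}$ via the Hurwitz--Radon bound \eqref{rho}, must be extracted cleanly.
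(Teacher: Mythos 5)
Your overall architecture matches the paper's (use the explicit formula \eqref{traceP} on polar algebras, and defer the substantial claim that exceptional Hsiang algebras are Killing metrized to \cite[Proposition 6.11.1]{NTVbook}/\cite{Tk10b}), and your observation that the quantity really governed by Theorem \ref{th:exc1} is $\trace L(x)^2=\kappa(x,x)$ rather than $(\trace L(x))^2$ is a good catch. But there is a genuine gap in the direction ``Killing metrized $\Rightarrow$ exceptional or mutant'', i.e.\ in showing that a Killing-metrized \emph{polar} algebra must be mutant. You pass from ``Killing metrized'' to ``$\kappa$ proportional to $h$'' via the assertion that on an exact commutative algebra any invariant bilinear form is a scalar multiple of any other. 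That assertion is unjustified and false in general (a direct sum of two Killing-metrized exact algebras has an invariant, nondegenerate Killing form that need not be proportional to the given product metric; and $h$ itself is not a trace-form, so the paper's remark about trace-forms does not apply to it). Without it, your argument only establishes: if $\kappa\propto h$ then mutant. But the form \eqref{traceP} is \emph{always} nondegenerate (both block coefficients are positive), so for a non-mutant polar algebra the failure of Killing metrizability can only come from failure of \emph{invariance} of $\kappa$, and mere non-proportionality to $h$ does not exhibit that failure. Your closing remark that one must verify that \eqref{traceP} ``genuinely fails to be proportional to $h$'' misidentifies the remaining work: non-proportionality is immediate from the formula; what must be shown is that invariance of $\kappa$ forces the two block coefficients to agree.

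The paper closes exactly this gap by a short computation you would need to supply: take nonzero $z_i\in\Alg_i$ and evaluate $\kappa(z_0\diamond z_1, z_0\diamond z_1)$ in two ways. Using the assumed invariance of $\kappa$ together with \eqref{Vb} and \eqref{Vb1} one gets $\kappa(z_0\diamond z_1,z_0\diamond z_1)=h(z_1,z_1)\kappa(z_0,z_0)$; using \eqref{traceP} directly (since $z_0\diamond z_1\in\Alg_1$) one gets $\kappa(z_0\diamond z_1,z_0\diamond z_1)=2h(z_0,z_0)h(z_1,z_1)\dim\Alg_0$. Comparing, $\kappa(z_0,z_0)=2h(z_0,z_0)\dim\Alg_0$, while \eqref{traceP} also gives $\kappa(z_0,z_0)=h(z_0,z_0)\dim\Alg_1$; hence $\dim\Alg_1=2\dim\Alg_0$, and the Hurwitz--Radon bound \eqref{rho} then forces $\dim\Alg_0\in\{1,2,4,8\}$, i.e.\ the algebra is mutant. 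With this computation inserted in place of the false general principle, your proof goes through and is essentially the paper's.
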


\begin{proof}
Suppose that a polar algebra $(\alg, \diamond, h)$ is Killing metrized. In this case, by \eqref{traceP} the Killing form has the expression
\begin{align}\label{kappa1}
\kappa(x,y)=\trace L(x)L(y)=h(x_0,y_0)\dim \Alg_1+2h(x_1,y_1)\dim \Alg_0,
\end{align}
where $x=(x_0,x_1)$, $y=(y_0,y_1)$ according to the polar decomposition $\Alg=\Alg_0\oplus \Alg_1$. Next, the identity \eqref{kappa1} is applied to $x=y=z_0z_1$, where $z_i\in \Alg_i$ are nonzero elements. By the assumed invariance of $\kappa$ and the relations \eqref{Vb} and \eqref{Vb1},
\begin{align}
\kappa(z_0z_1,z_0z_1)&=\kappa(z_0(z_0z_1),z_1)=h(z_0,z_0)\kappa(z_1,z_1),\\
\label{tht1}\kappa(z_0z_1,z_0z_1)&=\kappa(z_1(z_0z_1),z_0)=h(z_1,z_1)\kappa(z_0,z_0).
\end{align}
On other hand, since $z_0z_1\in \Alg_1$, by \eqref{kappa1},
\begin{align}\label{tht2}
\begin{aligned}
\kappa(z_0z_1,z_0z_1)&=2h(z_0z_1,z_0z_1)\dim \Alg_0\\
&=2h(z_0(z_0z_1),z_1)\dim \Alg_0=2h(z_0,z_0)h(z_1,z_1)\dim \Alg_0.
\end{aligned}
\end{align}
By assumption $h(z_1,z_1)\ne0$, hence combining \eqref{tht1} and \eqref{tht2} yields
\begin{align}\label{tht3}
\kappa(z_0,z_0)=2h(z_0,z_0)\dim \Alg_0.
\end{align}
On the other hand, by \eqref{kappa1}, $\kappa(z_0,z_0)=h(x_0,y_0)\dim \Alg_1$, and in \eqref{tht3} this yields $\dim \Alg_1=2\dim \Alg_0$. Applying \eqref{rho}, we get $\dim \Alg_0-1\le \rho(\dim\Alg_0)$ implying by the properties of the Radon-Hurwitz function $\rho$ that $\dim \Alg_0\in\{1,2,4,8\}$, and therefore  $(\alg, \diamond, h)$ must be a mutant polar algebra.

This shows that if a normalized Hsiang algebra is Killing metrized then it must be exceptional or mutant. That a mutant algebra is Killing metrized follows from the definition and \eqref{traceP}. That an exceptional Hsiang algebra is Killing metrized is a substantial claim proved in \cite[Proposition 6.11.1]{NTVbook} (see also \cite[Theorem 4]{Tk10b}), where the terminology is a bit different.
%In \eqref{kappa1} this gives
%$$
%\kappa(x,x)=\trace L(x)^2=(h(x_0,x_0)+h(x_1,x_1))\dim \Alg_1=h(x,x)\dim \Alg_1,
%$$
%hence, by Theorem~\ref{th:exc1}, $(\Alg,\diamond,h)$ must be exceptional or mutant, a contradiction.
\end{proof}

Summarizing, there are two main classes of radial Hsiang algebras:
\begin{enumerate}
\item[(i)]the polar algebras constructed from Clifford systems.
\item[(ii)] the Killing metrized Hsiang algebras.
\end{enumerate}
There are infinitely many nonisomorphic polar algebras (i), occuring in arbitrarily large dimensions, while results in \cite[Ch.~6]{NTVbook} establish that exceptional Hsiang algebras occur in at most finitely many dimensions no greater than $72$. The intersection of (i) and (ii) is small, consisting of the four mutant Hsiang algebras.

\subsection{The Peirce decomposition and the hidden Jordan algebra structure}\label{sec:Peirce}%{Polar vs exceptional Hsiang algebras}
The results recounted next are needed in Theorem \ref{th:hurwitzdefect} that shows a relation between two numerical invariants associated with a Killing metrized Hsiang algebra.

A principal tool in the structure theory of Hsiang algebras (in particular, in the proof of the finiteness of the isomorphism types of exceptional radial eigencubics) is the Peirce decomposition \eqref{Periced} of an idempotent (for more details see \cite[Ch.~6]{NTVbook}). This is like the Peirce decomposition associated with an idempotent in a Jordan algebra, although a bit more complicated.

By \eqref{self}, for an idempotent $c$ in a metrized commutative algebra $(\Alg,\diamond,h)$, the multiplication operator $L(c)$ is self-adjoint, hence $L(c)$ naturally induces the $h$-orthogonal eigenspace decomposition, called the \textit{Peirce decomposition} associated with $c$:
$$
\Alg=\bigoplus_{i=1}^k \Alg_c(\alpha_i),
$$
where $L(c)=\alpha_i1_{\Alg_c(\alpha_i)}$, $1\le i\le k$, and $\alpha_i$ are (necessarily real) eigenvalues of $L(c)$, called the Peirce spectrum of $c$.

For any nonzero idempotent $c$ in a Hsiang algebra $(\Alg,\diamond,h)$, the Peirce spectrum is $\{1,-1,-\frac12,\frac12\}$ and the associated  Peirce decomposition is
\begin{align}\label{Periced}
\Alg=\Alg_c(1)\oplus \Alg_c(-1)\oplus \Alg_c(-\half12)\oplus \Alg_c(\half12).
\end{align}
Moreover, the eigenvalue $1$ has multiplicity one and it follows from the trace-free condition in \eqref{Hsiang1} that %$\dim \Alg_c(\half12)=2\dim \Alg_c(-1)+\dim \Alg_c(-\half12)-2$ which implies
\begin{align}\label{n1n2}
\dim \Alg=3\dim \Alg_c(-1)+2\dim \Alg_c(-\half12)-1,
\end{align}
where the \emph{Peirce dimensions}
\begin{align}\label{peircedimensions}
n_1=\dim\Alg_c(-1),\qquad n_2=\dim\Alg_c(-\half12).
\end{align}
and
$$\dim \Alg_c(\half12)=2\dim \Alg_c(-1)+\dim \Alg_c(-\half12)-2 = 2n_{1} + n_{2} - 2
$$
do not depend on the particular choice of $c$. The Peirce dimensions $(n_1,n_2)$ for exceptional eigencubics are shown in Table~\ref{tabs}. For a polar algebra $\Alg=\alg_0\oplus \alg_1$, the Peirce dimensions are given by
$$
(n_1,n_2)=(\dim \alg_0-1,\,\half12\dim\alg_1-\dim \alg_0+2).
$$

One can prove that the subspace $\mathbb{B}_c:=\Alg_c(1)\oplus \Alg_c(-\frac12)$ is  a subalgebra $(\Alg,\diamond,h)$, and moreover, with the modified multiplication
%one can redefine the old multiplication on $\mathbb{B}_c$ by the new one
\begin{align}\label{bullet1}
x\ast y=\frac{1}{2}x\diamond y+h(x,c)y+h(y,c)x-2h(x\diamond y,c)c, \qquad x,y\in \mathbb{B}_c,
\end{align}
$\mathbb{B}_c$ becomes a rank 3 Jordan algebra. A remarkable property is that this \textit{Jordan algebra $(\mathbb{B}_c,\ast,h)$ is simple if and only if the original Hsiang algebra $(\Alg,\diamond,h)$ is exceptional}. In the case when $n_2=\dim \Alg_c(-\half12)=0$, the subalgebra $\mathbb{B}_c=\Alg_c(1)\cong \R{}$  is trivial. This corresponds exactly to the five the isoparametric eigencubics. By the Jordan-von Neumann-Wigner classification \cite{JordanNeumann}, the simple formally real rank three Jordan algebras are exactly the $(3d+3)$-dimensional Jordan algebras of Hermitian $3\times 3$-matrices over Hurwitz algebras $\Hurw_d$, $d=1,2,4,8$. This implies that when the Peirce dimension $n_2=\dim \Alg_c(-\half12)$ is positive there holds
\begin{align}\label{d}
n_2=\dim \Alg_c(-\half12)=\dim \mathbb{B}_c-\dim \Alg_c(1)=3d+2,
\end{align}
which combined with \eqref{n1n2} implies that the dimension of $ \Alg$ \emph{is divisible by 3}:
\begin{align}
\label{three}
\dim \Alg=3(2d+1+n_1).
\end{align}
This observation was what originally suggested a relation between exceptional Hsiang algebras and the tripling construction defined in section \ref{sec:trip}.

In this notation, a Hsiang algebra $(\Alg,\diamond,h)$ is mutant if and only if $n_2=2$, which formally corresponds to the case $d=0$ in \eqref{d}.
This motivates the following definition.

\begin{definition}\label{def:hurdim}
Given a Killing metrized algebra, the number $d=d(\Alg)$ such that \eqref{d} holds is called its \textit{Hurwitz dimension}.
\end{definition}

\begin{corollary}
If $(\Alg,\diamond,h)$ is a Killing metrized Hsiang algebra distinct from a Cartan  the relation \eqref{d} holds with $d\in \{0,1,2,4,8\}$.
\end{corollary}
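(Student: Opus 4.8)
The plan is to assemble the statement from results already in hand; it is essentially a bookkeeping corollary of Theorem~\ref{th:trace} together with the hidden Jordan algebra structure and the classical classification of Euclidean Jordan algebras. By Theorem~\ref{th:trace}, a Killing metrized Hsiang algebra $(\Alg,\diamond,h)$ is either a mutant polar algebra or exceptional, and these two possibilities are mutually exclusive (a mutant being a polar algebra). I would dispatch the two cases in turn, checking each time that the Hurwitz dimension $d$ of Definition~\ref{def:hurdim} is well defined and lies in $\{0,1,2,4,8\}$.

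The mutant case needs no work: as recalled just before Definition~\ref{def:hurdim}, a mutant Hsiang algebra has Peirce dimension $n_2=2$, so \eqref{d} holds with $d=0$. For the exceptional case I would fix a nonzero idempotent $c$ (which exists by Proposition~\ref{pro:ide}) and pass to the associated subalgebra $\mathbb{B}_c=\Alg_c(1)\oplus\Alg_c(-\tfrac12)$ equipped with the modified product \eqref{bullet1}. First I would rule out $n_2=\dim\Alg_c(-\tfrac12)=0$: in that situation $\mathbb{B}_c=\Alg_c(1)\cong\R{}$ is trivial, which forces $(\Alg,\diamond,h)$ to be one of the five Cartan (isoparametric) eigencubics, and this is excluded by hypothesis. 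Hence $n_2>0$, and then $(\mathbb{B}_c,\ast,h)$ is a nontrivial simple rank three Jordan algebra; it is formally real because the restriction of the positive definite form $h$ to $\mathbb{B}_c$ makes it Euclidean. The Jordan--von Neumann--Wigner classification \cite{JordanNeumann} then identifies $\mathbb{B}_c$ with the algebra of Hermitian $3\times 3$ matrices over a Hurwitz algebra $\Hurw_d$, $d\in\{1,2,4,8\}$, of dimension $3d+3$; since $\dim\Alg_c(1)=1$, this gives $n_2=\dim\mathbb{B}_c-1=3d+2$, i.e.\ \eqref{d} holds with $d\in\{1,2,4,8\}$. Combining the two cases yields $d\in\{0,1,2,4,8\}$.

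The one point requiring care, rather than computation, is the logical role of the hypothesis ``distinct from a Cartan eigencubic'': it must be precisely what excludes $n_2=0$ in the exceptional case. So I would want to cite explicitly (from the discussion following \eqref{bullet1}, ultimately \cite[Ch.~6]{NTVbook}) that among exceptional Hsiang algebras the vanishing $n_2=0$ characterizes exactly the five Cartan eigencubics (the eikonal algebras). With that in place the argument is a straightforward chain of invocations, with no delicate estimates involved; the ``main obstacle'' is simply making sure the $n_2=0$ subcase is handled cleanly and that the rank three Jordan algebra in question is genuinely Euclidean, so that the Jordan--von Neumann--Wigner theorem applies in the form stated.
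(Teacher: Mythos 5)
Your proposal is correct and follows exactly the route the paper intends: the corollary is stated without proof as an immediate consequence of Theorem~\ref{th:trace} together with the discussion surrounding \eqref{bullet1}--\eqref{d} (mutants give $n_2=2$, i.e.\ $d=0$; exceptional non-Cartan algebras have $n_2>0$ and a simple formally real rank three Jordan algebra $\mathbb{B}_c$, so Jordan--von Neumann--Wigner gives $n_2=3d+2$ with $d\in\{1,2,4,8\}$). Your identification of the hypothesis ``distinct from a Cartan'' as precisely what excludes the $n_2=0$ case is also the paper's reading.
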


\begin{remark}%[Example~\ref{ex:cart0} continued]\label{ex:cart1}
The Hsiang algebras corresponding with the isoparametric eigencubics appear in the context of pseudocomposition algebras \cite{Meyberg1}. On the other hand, the Cartan eigencubics were used in \cite{NTV}, \cite{NV2013five} in the construction of non-classical and singular examples of truly viscosity  solutions to fully nonlinear Hessian type uniformly elliptic equations; see also \cite[sec.~3]{Tk19a} for a  friendly and detailed exposition of this construction. In \cite{Tk14}, a one-to-one correspondence between cubic solutions of the general eikonal equation similar to \eqref{Cartan2} and rank three Jordan algebras has been established.
\end{remark}

\begin{remark}
By \cite[Theorem 6.11.1]{NTVbook}, a radial Hsiang algebra satisfies $n_{2} = 0$ if and only if it contains no nonzero $2$-nilpotent element. Via \eqref{grad} it follows that the singular locus of a radial eigencubic is trivial if and only if the radial eigencubic is one of the five radial eigencubics of isoparametric type.
\end{remark}

\section{Quasicomposition algebras}
\label{sec:Hurw}
The quasicomposition algebras defined in this section generalize Hurwitz algebras and cross product algebras.
Hurwitz algebras were defined in Example \ref{ex:tri}. Some basic facts about Hurwitz algebras needed here are recalled now. Their proofs can be found in \cite{Elduque2020, SpringerVeldkamp}.
Let $e$ be the unit in the Hurwitz algebra $(\Alg,\diamond)$. The polar bilinear form
\begin{align}\label{compos2}
n(x,y)=n(x+y)-n(x)-n(y)
\end{align}
induces the trace form $t(x)=n(x,e)\in \Alg^*$ such that any element $x\in\Alg$ satisfies  the quadratic relation
\begin{align}\label{quadrat}
x^2 -t(x)x+n(x)e=0,
\end{align}
and the norm recovered from $t$ by $n(x):=t(x)^2-t(x^2)$ satisfies the composition law \eqref{compos}. It follows from \eqref{compos} that $n(e)=1$ and from \eqref{compos2} that $t(e)=2$.
Then
$$
x^\sigma=t(x)e-x
$$
is an involution on $\Alg$ and the trace bilinear form $h(x,y)=t(x\diamond y)$ is involutively invariant, in other words, $(\Alg, \diamond,\si, h)$ is a metrized algebra with involution.

Any Hurwitz algebra satisfies the \textit{composition identity} (see Lemma~1.3.3 in \cite{SpringerVeldkamp})
\begin{align}\label{compos3}
L(x^\sigma)L(x)y=x^\sigma\diamond (x\diamond y)=n(x)y.
\end{align}

\begin{definition}
A metrized algebra $(\Alg,\diamond, \sigma, h)$ is a \textit{quasicomposition algebra} if
\begin{align}\label{compos4}
L(x)L(x^\sigma)L(x)y=x\diamond (x^\sigma\diamond (x\diamond y)=h(x,x)(x\diamond y)
\end{align}
holds for any $x,y\in \Alg$, where $n(x)=h(x,x)$. A quasicomposition algebra over the real field with a positive definite form $h$ is said to be Euclidean.
\end{definition}

Note that \eqref{compos3} immediately implies \eqref{compos4}.

\begin{proposition}
A unital composition algebra is a quasicomposition algebra.
\end{proposition}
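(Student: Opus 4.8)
The plan is to obtain \eqref{compos4} as an immediate consequence of the composition identity \eqref{compos3}, which holds in every Hurwitz algebra. A unital composition algebra is, by definition, a Hurwitz algebra $(\Alg,\diamond)$ with norm $n$ and unit $e$; as recalled above, it carries the canonical involution $x^{\sigma}=t(x)e-x$ together with an involutively invariant nondegenerate symmetric bilinear form $h$ whose associated quadratic form is the norm, $h(x,x)=n(x)$. Thus $(\Alg,\diamond,\sigma,h)$ is already a metrized algebra with involution, so it remains only to verify the cubic operator identity \eqref{compos4}.

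For this I would read \eqref{compos3} as the operator identity $L(x^{\sigma})\,L(x)=n(x)\,\id_{\Alg}$ on $\Alg$, and compose with $L(x)$ on the left, obtaining
\begin{align*}
L(x)\,L(x^{\sigma})\,L(x)\,y \;=\; L(x)\bigl(n(x)\,y\bigr) \;=\; n(x)\,(x\diamond y) \;=\; h(x,x)\,(x\diamond y)
\end{align*}
for all $x,y\in\Alg$, where the last equality is the identification $n(x)=h(x,x)$. This is precisely \eqref{compos4}, so $(\Alg,\diamond,\sigma,h)$ is a quasicomposition algebra.

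There is no genuine obstacle here --- indeed this is exactly the implication already flagged by the remark that \eqref{compos3} immediately implies \eqref{compos4} --- and the only point demanding a little care is a matter of convention: one must use the normalization of the trace/norm form in which the quadratic form of $h$ coincides with the multiplicative norm $n$ appearing in \eqref{compos3}, rather than a scalar multiple of it, so that the right-hand side of the display really reads $h(x,x)(x\diamond y)$ as demanded by \eqref{compos4}. With the conventions fixed in Example~\ref{ex:tri} and in the discussion preceding the definition this is automatic, and nothing further is needed.
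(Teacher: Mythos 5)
Your proof is correct and is essentially the paper's own argument: the paper disposes of this proposition with the single remark that \eqref{compos3} immediately implies \eqref{compos4}, which is exactly your left-composition with $L(x)$. Your additional care about normalizing $h$ so that $h(x,x)=n(x)$ is a reasonable precaution but adds nothing beyond what the paper's conventions already fix.
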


However, a quasicomposition algebra need not be unital. Examples are given below.

An important invariant of a quasicomposition algebra is its deviation from the composition property. By \eqref{compos3}, for any composition (Hurwitz) algebra
\begin{align}\label{traceC}
\trace L(x)L(x^\sigma)=\dim \Alg\cdot  h(x,x).
\end{align}
Such a trace identity holds true for any quasicomposition algebra with a certain integer factor in place of $\dim \Alg$.

\begin{theorem}\label{the:defect}
Suppose $(\Alg,\diamond, \sigma, h)$ is a nonzero  quasicomposition algebra. There is an integer $0\le \delta(\Alg)\le \dim\Alg-1$ such that
\begin{align}\label{traceQ}
\trace L(x)L(x^\sigma)=(\dim \Alg-\delta(\Alg))h(x,x)
\end{align}
for all $x\in \Alg$. Furthermore, $\delta=0$ if and only if $(\Alg,\diamond, \sigma, h)$ is a composition algebra.
\end{theorem}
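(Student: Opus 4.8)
The plan is to extract the integer $\delta(\Alg)$ as the difference between $\dim \Alg$ and the "trace defect" of the quasicomposition identity, and then show this difference is genuinely constant (independent of $x$) by a polarization argument, with integrality coming from a rank computation. First I would introduce, for each $x \in \Alg$, the self-adjoint operator $S(x) := L(x)L(x^\sigma)$; by \eqref{self} we have $L(x^\sigma) = L(x)^\ast$, so $S(x) = L(x)L(x)^\ast$ is positive semidefinite and $\trace S(x) = \|L(x)\|_{HS}^2 \geq 0$. The quasicomposition identity \eqref{compos4} says precisely that $S(x)L(x) = h(x,x)L(x)$, i.e.\ the image of $L(x)$ lies in the $h(x,x)$-eigenspace of $S(x)$. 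Since $\trace S(x)$ is manifestly a quadratic form in $x$, the content of the theorem is that this quadratic form is a constant multiple of $h(x,x)$, and that the constant is a nonnegative integer bounded by $\dim \Alg$.

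The key step is to prove $\trace S(x) = \lambda\, h(x,x)$ for some constant $\lambda$. For this I would use the linearization of \eqref{compos4}: replacing $x$ by $x + ty$ and collecting the coefficient of $t$ gives a trilinear identity among $L(x), L(y), L(x^\sigma), L(y^\sigma)$; contracting (taking the trace) produces an identity of the form $\trace\bigl(S(x)\,\text{-polarized}\bigr) = h(x,x)(\dots) + h(x,y)(\dots)$. More directly: the bilinear form $\beta(x,y) := \trace L(x)L(y^\sigma) = \trace\bigl(L(x)L(y)^\ast\bigr)$ is symmetric (it equals $\langle L(x), L(y)\rangle_{HS}$) and, using the involutive invariance of $h$ together with \eqref{compos4}, one checks $\beta$ is an \emph{invariant} bilinear form on the quasicomposition algebra, hence (by the algebra being "connected enough" — concretely, because $\beta(x\diamond z, y) $ relates to $\beta$ evaluated on products) it must be proportional to $h$ whenever $h$ is itself the unique invariant form. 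Actually the cleanest route avoids any irreducibility hypothesis: apply \eqref{compos4} with $y$ ranging over a basis and take the trace of the operator identity $L(x)L(x^\sigma)L(x) = h(x,x)L(x)$ after composing on the right with $L(x^\sigma)$ and using \eqref{traceC}-style manipulations; since $\trace L(x)L(x^\sigma)L(x)L(x^\sigma) = \trace S(x)^2 = h(x,x)\trace S(x)$, the operator $S(x)$ restricted to $\image L(x)$ is scalar $h(x,x)$, so $\trace S(x) = h(x,x)\cdot \rank\bigl(S(x)|_{\image L(x)}\bigr) + \trace\bigl(S(x)|_{(\image L(x))^\perp \cap \ker?}\bigr)$ — this needs care and is where the real work lies.

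The main obstacle will be showing the proportionality constant $\lambda$ is the \emph{same} for all $x$ and is an integer. The integrality is natural once one identifies $\dim\Alg - \delta(\Alg)$ with $\lambda = \trace S(x)/h(x,x)$ for a single $x$ with $h(x,x) = 1$, provided one first knows $S(x)/h(x,x)$ is a projection-like operator whose trace is its rank. So I would argue: for generic $x$ with $h(x,x)=1$, \eqref{compos4} gives $S(x)^2 = S(x)$ on $\image L(x)$, and one shows $\ker L(x^\sigma) \supseteq \ker S(x)$ forces $S(x)$ itself (not just its restriction) to be an orthogonal projection onto $\image L(x)$ — using that $S(x) = L(x)L(x)^\ast$ is self-adjoint with $S(x)^2 = S(x)$ on all of $\Alg$ (the complement being $\ker L(x)^\ast = \ker S(x)$). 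Then $\trace S(x) = \rank L(x) =: \lambda(x)$ is an integer. Constancy of $\lambda(x)$ then follows because $\rank L(x)$ is lower-semicontinuous and generically constant, while $\trace S(x)/h(x,x)$ is a rational function of $x$ that must therefore be globally constant; continuity extends the identity $\trace S(x) = \lambda h(x,x)$ to all $x$. Finally $0 \leq \lambda \leq \dim\Alg$ is clear since $S(x)$ is a projection, and $\lambda \leq \dim\Alg - 1$ follows because $c \in \ker L(c)^\ast$? — more carefully, one shows $L(x)$ cannot be invertible for every $x$ in a quasicomposition algebra unless... actually for $\delta \le \dim\Alg - 1$ one notes the cubic form is nonzero so some $L(x)$ has nontrivial kernel, forcing $\lambda < \dim\Alg$ at that point, hence everywhere. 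The equivalence $\delta = 0 \iff$ composition algebra is then immediate: $\delta = 0$ means $S(x) = h(x,x)\Id$, which is exactly \eqref{compos3}, the composition identity.
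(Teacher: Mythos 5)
Your core argument coincides with the paper's: from \eqref{compos4} one composes on the right with $L(x^{\sigma})$ to get $S(x)^{2}=h(x,x)S(x)$ for the self-adjoint operator $S(x)=L(x)L(x^{\sigma})=L(x)L(x)^{\ast}$ (the paper works with $L(x^{\sigma})L(x)$, which has the same trace and rank), so for $h(x,x)\neq 0$ the operator $h(x,x)^{-1}S(x)$ is an orthogonal projection, its trace is the integer $\rank L(x)$, and a continuity/Zariski-density argument shows this integer is independent of $x$; the equivalence $\delta=0\iff$ composition is then read off from \eqref{compos3}. That part is sound, and the alternative routes you float in your second paragraph (linearization, uniqueness of invariant bilinear forms) are detours you rightly discard.

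The one genuine error is in your treatment of the bound. Writing $\lambda=\dim\Alg-\delta(\Alg)=\trace S(x)/h(x,x)$, the assertion $\delta(\Alg)\le\dim\Alg-1$ in \eqref{traceQ} is the statement $\lambda\ge 1$, not $\lambda\le\dim\Alg-1$. You set out to prove the latter by arguing that some $L(x)$ must have nontrivial kernel; that claim is false in general (for a Hurwitz algebra $L(x)$ is invertible whenever $n(x)\neq 0$, so $\lambda=\dim\Alg$ and $\delta=0$), and it directly contradicts your own closing sentence that $\delta=0$ characterizes composition algebras. The inequality actually required, $\lambda\ge 1$, follows at once from the hypothesis that the algebra is nonzero: some $L(x_{0})\neq 0$, hence $\trace S(x_{0})=\trace L(x_{0})L(x_{0})^{\ast}>0$ in the Euclidean setting, so the constant $\lambda$ is a positive integer. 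With that correction your proof closes.
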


\begin{proof}
Let $x\ne0$.The defining identity \eqref{compos4} implies the  operator identity
\begin{align}\label{division}
L(x^\sigma)L(x)L(x^\sigma)L(x)=h(x,x)L(x^\sigma)L(x),
\end{align}
which can be rewritten as $P(x)^2=P(x)$, where $P(x)=\frac{1}{h(x,x)}L(x^\sigma)L(x)$. It follows from \eqref{Qass} that
$$
h(x,x)h(P(x)y,z)=h(L(x^\sigma)L(x)y,z)=h(y,L(x^\sigma)L(x)z)=h(x,x)h(y,P(x)z),
$$
so that $P(x)$ is self-adjoint with respect to $h$. This implies that $P(x)$ is an orthogonal projection in $\Alg$, thus
$\trace P(x)=\dim \im P(x)=:m(x)\in \mathbb{Z}$. Since $P(x)$ is not identically zero, this implies $m(x)\in \{1,2,\ldots, \dim \Alg\}$. Moreover, $\trace L(x^\sigma)L(x)-m(x)h(x,x)=0$ identically on $\Alg$. A standard continuity (in the Zariski topology) argument implies that $m(x)$ must be constant on $\Alg$, so that $m(x)=m$. Then $\delta:=\dim \Alg-m$ satisfies \eqref{traceQ}.

By \eqref{traceC}, $\delta(\Alg)=0$ for any Hurwitz algebra. In the converse direction, if $\delta(\Alg)=0$ then the above argument shows that $\dim \im P(x)=\dim \Alg$, therefore $\im P(x)=\im L(x^\sigma)L(x)=\Alg$, hence $P(x)=1$ on $\Alg$, in other words, $L(x^\sigma)L(x)=h(x,x)1_{\Alg}$, implying \eqref{compos3}, so that $(\Alg,\diamond, \sigma, h)$ is a composition algebra.
\end{proof}

The proof of Theorem~\ref{the:defect} implies the following characterization of $\delta(\Alg)$.

\begin{corollary}\label{cor:delta}
Suppose $(\Alg,\diamond, \sigma, h)$ is a nonzero  quasicomposition algebra. For any nonzero $x\in \Alg$
$$
\delta(\Alg)=\dim \ker L(x^\sigma)L(x)=\dim \ker L(x)L(x^\sigma).
$$
\end{corollary}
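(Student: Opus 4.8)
The plan is to read the result off directly from the proof of Theorem~\ref{the:defect}, where the relevant operator already appears. Fix a nonzero $x\in\Alg$ with $h(x,x)\neq 0$ --- automatic for every nonzero $x$ when $h$ is Euclidean, which is the case of interest --- and set $P(x)=\tfrac{1}{h(x,x)}L(x^\sigma)L(x)$ as in that proof. There it was shown, using the defining identity \eqref{compos4} and the invariance \eqref{Qass}, that $P(x)$ is an $h$-self-adjoint idempotent (hence an orthogonal projection) with $\dim\im P(x)=\dim\Alg-\delta(\Alg)$, a number independent of $x$.

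From $P(x)^2=P(x)$ one has the direct sum decomposition $\Alg=\im P(x)\oplus\ker P(x)$, so $\dim\ker P(x)=\dim\Alg-\dim\im P(x)=\delta(\Alg)$. Since $h(x,x)\neq 0$, the operators $P(x)$ and $L(x^\sigma)L(x)$ differ by a nonzero scalar and so have the same kernel, giving the first equality $\dim\ker L(x^\sigma)L(x)=\delta(\Alg)$. (Equivalently, one may recover $\dim\im P(x)$ from the trace identity \eqref{traceQ}, using that an idempotent has rank equal to trace and that $\trace L(x^\sigma)L(x)=\trace L(x)L(x^\sigma)$.)

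For the second equality I would apply the first with $x^\sigma$ in place of $x$: since $\sigma$ is a linear involution, $x^\sigma$ is again nonzero, and $h(x^\sigma,x^\sigma)=h(x,x)\neq 0$ by \eqref{invol}, so the first equality yields $\delta(\Alg)=\dim\ker L\bigl((x^\sigma)^\sigma\bigr)L(x^\sigma)=\dim\ker L(x)L(x^\sigma)$, using $\sigma\circ\sigma=1$.

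I do not expect a genuine obstacle: the corollary is essentially bookkeeping within the proof of Theorem~\ref{the:defect}. The one point deserving care is the standing hypothesis $h(x,x)\neq 0$. It is automatic in the Euclidean setting; for an indefinite metric one must exclude the null cone, since on an isotropic $x$ the operator identity \eqref{division} forces $L(x^\sigma)L(x)$ to be $2$-step nilpotent, and then its kernel can strictly exceed $\delta(\Alg)$ --- as already happens for a split Hurwitz algebra, where $\delta(\Alg)=0$ but $L(x^\sigma)L(x)=0$ for isotropic $x$.
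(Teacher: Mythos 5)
Your proposal is correct and matches the paper's treatment: the paper gives no separate argument for this corollary, stating only that it follows from the proof of Theorem~\ref{the:defect}, which is exactly the bookkeeping you carry out (rank--nullity for the orthogonal projection $P(x)$, plus the substitution $x\mapsto x^\sigma$ for the second equality). Your closing caveat about isotropic $x$ in the indefinite case is a genuine and worthwhile refinement, since the paper's proof of Theorem~\ref{the:defect} also silently divides by $h(x,x)$ for arbitrary nonzero $x$.
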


\begin{definition}
The integer $\delta(\Alg)$ defined by Theorem \ref{the:defect} is the \textit{defect} of the quasicomposition algebra $\Alg$.
\end{definition}

Hurwitz algebras are quasicomposition algebras having defect $0$. The defect measures the deviation of a quasicomposition algebra from the composition property.
There follow some examples of quasicomposition algebras that are not Hurwitz algebras.

\begin{example}[$\delta(\Alg)=0$]\label{ex:com}
As in Example \ref{ex:tri}, the \textit{paracomplex} algebra $\hat{\mathbb{C}}$ is the $2$-dimensional real vector space $\mathbb{C}$ equipped with the commutative multiplication
$x\diamond y=\overline{x}\overline{y},$ where the juxtaposition $xy$ is the standard multiplication in $\mathbb{C}$. Take $\si$ to be the identity on $\hat{\mathbb{C}}$. Then $\si$ is an involution on $\hat{\mathbb{C}}$:
$$
(x\diamond y)^\sigma=x\diamond y=y\diamond x=y^\sigma\diamond x^\sigma.
$$
Because $\hat{\com}$ is exact, it is not unital.
%% The following is inadequate - it shows 1 is not a unit, not that there is no unit.
%Note that $x\diamond 1=\bar x\ne x$ in general, so that $\hat{\mathbb{C}}$ is a \textit{non-unital algebra}.
Finally,
$$
x\diamond (x^\sigma\diamond (x\diamond y)=x\diamond (x\diamond (x\diamond y) =x\diamond (x\diamond \bar{x}\bar{y}) = x\diamond \bar{x}xy = n(x)x\diamond y,
$$
implying the quasicomposition identity \eqref{compos4} with $\delta(\Alg)=0$.

Note that the paraquaternions are not a quasicomposition algebra. The preceding proof uses the commutativity of the usual complex multiplication in an essential way.
\end{example}

\begin{example}[$\delta(\Alg)=1$]
A \textit{cross product algebra} is a metrized anticommutative algebra $(\mathbb{A}, \times, \sigma, h)$ equipped with the standard involuiont $x^{\si} := -x$, such that
\begin{align}\label{crossproduct}
x\times (x\times y)=-h(x,x)y+h(x,y)x,
%  \item[(ii)] $x^\sigma:=-x$
\end{align}
for all $x,y\in \Alg$. Nonzero cross product algebras exist only in dimensions $3$ and $7$ \cite{brown1967vector}. Any simple $3$-dimensional Lie algebra is a cross-product algebra. The case of $\so(3)$ yields the usual cross product on $\rea^{3}$ and a $3$-dimensional real cross product algebra with positive definite $h$ is isomorphic to this example. The imaginary octonions with the cross product equal to the commutator of the usual octonion product give an example of a $7$-dimensional cross-product algebra.

Since the multiplication is anticommutative, the identity \eqref{crossproduct} becomse $x^\sigma\times (x\times y)=h(x,x)y-h(x,y)x$ which implies
$$
x\times (x^\sigma\times (x\times y))=h(x,x)x\times y,
$$
so that $(\mathbb{A}, \times, \sigma, h)$ is a {quasicomposition} algebra. Since $x\times x=0$, by Corollary~\ref{cor:delta} one has $\delta(\Alg)=\dim \ker L(x^\sigma)L(x)\ge1$. On the other hand, if $x\ne 0$ and $y\in  \ker L(x^\sigma)L(x)$ is non-collinear to $x$ then by (i) we have $-h(x,x)y+h(x,y)x=0$, implying $y=0$, so that $\delta(\Alg)=1$.
\end{example}

\begin{example}[$\delta(\Alg)=2$]
In 1978 Domokos and K\"ovesi-Domokos \cite{Domokos} introduced the color algebra $\mathrm{Col}$ which satisfies the triality rule for quarks and an exact superselection rule between leptons and quarks. It can be defined as a 7-dimensional algebra spanned by a unit $e$ and vectors $u_i, i= \pm1, \pm2, \pm3,$ satisfying
$$
u_{\pm i}\circ u_{\pm j}=\epsilon_{ijk} u_{\mp k}, \qquad u_{\pm i}\circ u_{\mp j}=\delta_{ij}e,
$$
where $\epsilon_{ijk}$ is the totally skew-symmetric tensor with $\epsilon_{123} = 1$. By construction, the algebra $\mathrm{Col}$ is metrized with respect to a natural Euclidean metric $h$. In \cite{EldColor} Elduque studied the `imaginary' 6-dimensional subspace $\Alg=e^\bot$ (the orthogonal complement to the unit) with the product $x\times y=\mathbf{pr}_{\Alg}(x\circ y)$) induced by orthogonal projection of $\mlt$ and showed that $\Alg$ satisfies the quasicomposition identity (although he did not use this terminology). An equivalent explicit description of $\Alg$ in terms of the the standard three-dimensional cross product algebra $(\rea^{3}, \cmlt, \si, h)$ with its standard involution $\si = -\Id$ presents it as $\Alg\cong \R{3}\times \R{3}$ with the anticommutative multiplication
%$$
%(x',x'')\times (y',y'')= (-x' \times y' + x'' \times y'', x' \times y'' + x'' \times y')
%$$
\begin{align}
(x',x'')\diamond (y',y'') &= \left(x' \times y' - x'' \times y'', -x' \times y'' - x'' \times y'\right)
\end{align}
and the product Euclidean metric $h$ on $\R{3}\times \R{3}$.
This algebra appears as such in \cite[Theorem $6.6$]{ElduqueMyung} where it is denoted $C_{0}(1, 1, 1)$ and called the \emph{vector color algebra}. Also in \cite[Theorem $3.1$]{ElduqueMyung} it is observed that $\diamond$ satisfies the quasicomposition identity.

An argument similar to that for the cross-product on $\R{3}$ shows that $\delta(\Alg)=2$.
\end{example}

\section{Tripling construction and Hsiang algebras}\label{sec:trip}
This section shows that the quasicompostion algebras arise naturally as building blocks for exceptional Hsiang algebras. The bridge material between these different contexts is the tripling construction described next.

\subsection{The triple algebra construction}
The classical Cayley-Dickson doubling construction produces from a Hurwitz algebra a new Hurwitz algebra having twice the dimension as the original algebra. The most well known proof of the classification of Hurwitz algebras is based on this process \cite{SpringerVeldkamp}. A. Albert extended the doubling construction to algebras with involution \cite{Albert1942}. The tripling constrution described here has a similar flavor, associating with an algebra with involution a commutative algebra having three times the dimension of the original algebra. Just as the doubling construction can be used to classify Hurwitz algebras, the triple construction is useful for studying Hsiang algebras.

The motivation for the tripling construction, described in detail below, following its definition, is the correspondence between metrized commutative algebras and cubic spaces. Succinctly, the trilinear form of a metrized algebra with involution can be viewed as a cubic form on the threefold direct product of the algebra with itself, and via the correspondence this yields the triple product of Definition \ref{def:triple}.

\begin{definition}\label{def:triple}
Given an algebra with involution $(\mathbb{A},\diamond, \sigma)$, its \emph{triple} is the vector space
\begin{align}\label{Tdecomp}
\trip(\mathbb{A})=\mathbb{A}\times \mathbb{A}\times \mathbb{A}
\end{align}
equipped with the evidently commutative multiplication given by
\begin{align}\label{tripleproduct}
\begin{aligned}
(x_1,x_2,x_3)\ast (y_1,y_2,y_3):=(
x_3^\sigma   y_2^\sigma+y_3^\sigma   x_2^\sigma,
x_1^\sigma   y_3^\sigma+y_1^\sigma   x_3^\sigma,
x_2^\sigma   y_1^\sigma+y_2^\sigma   x_1^\sigma
).
\end{aligned}
\end{align}
(Here juxtaposition indicates the product $\diamond$.)
Alternatively, the $\ast$-multiplication by $\xt=(x_1,x_2,x_3)$ is given by the endomorphism
\begin{align}\label{matr}
L_{\ast}(\xt)=R_{\ast}(\xt)=
\left(
  \begin{array}{ccc}
    0 & \sigma R_\diamond(x_3) & \sigma L_\diamond(x_2) \\
    \sigma  L_\diamond(x_3) & 0 & \sigma R_\diamond(x_1) \\
    \sigma R_\diamond(x_2) & \sigma L_\diamond(x_1) & 0 \\
  \end{array}
\right).
\end{align}
\end{definition}
It follows from \eqref{matr} that the triple algebra is exact,
\begin{align}\label{tracefree}
\trace L_\ast(\xt)=0, \qquad \forall x\in \trip(\mathbb{A}).
\end{align}

Although this paper treats only algebras over the real field, Definition \ref{def:triple} works as stated over general ground fields.

The triple algebra has the following motivation. Consider a metrized commutative algebra $(\alg, \diamond , h)$. By definition the trilinear form $\om(x, y, z) = h(x  y, z)$ is completely symmetric. This trilinear form can be viewed as a cubic form on the threefold product $\mathbb{A}\times \mathbb{A}\times \mathbb{A}$. This threefold product can be endowed with the product metric $H = h\times h \times h$, and this makes $(\mathbb{A}\times \mathbb{A}\times \mathbb{A}, H, \om)$ a cubic space. With respect to the metric $H$, the polarization of the cubic form $\om$ determines on $\mathbb{A}\times \mathbb{A}\times \mathbb{A}$ a commutative multiplication $\ast$, and this product is by definition the triple of the original multiplication. By construction the metric $H$ is invariant with respect to the triple product. The first author found this construction in the context of building new metrized commutative algebras from old and described it in \cite{Fox2009, Fox2015, Fox2021a}.

Now consider a semisimple real Lie algebra $\g$ metrized by its Killing form, $B$. The trilinear form $\om(x, y, z) = B([x, y], z)$ is completely antisymmetric, but still can be viewed as a cubic form on the threefold product of the Lie algebra, and the preceding construction goes through as written to produce a commutative multiplication on $\g \times \g \times \g$. This multiplication was introduced, with independent motivations, by M. Kinyon and A. Sagle in \cite{kinyon2002nahm}, where it was called the \emph{Nahm algebra} of the Lie algebra. Actually nothing about the Jacobi identity is used in the Nahm algebra construction and it makes sense for any metrized anticommutative algebra.

This shows that from a metrized commutative or anticommutative algebra there is a natural construction of a metrized commutative algebra having three times the dimension of the original algebra.

The  tripling construction described here was found by both authors looking for a common generalization of the commutative and anticommutative cases. After the fact it is evident that the tripling construction has much in common with the construction of Albert algebras from Hurwitz algebras, but this will be discussed elsewhere.

\medskip
The common construction realizes the triple algebra of a metrized algebra with involution $(\mathbb{A},\diamond, \sigma, h)$ as the polarization of the twisted cubic form $h^{\si}(x  y, z) = h(x  y, z^{\si})$ on $\mathbb{A}\times \mathbb{A}\times \mathbb{A}$. It is straightforward to check that $h$ is involutively invariant in the sense \eqref{invol}-\eqref{Qass} if and only if the twisted bilinear form $h^{\si}$ defined by $h^{\si}(x, y) = \tfrac{1}{2}(h(x, y^{\si}) + h(x^{\si}, y))$ satisfies
\begin{align}
h^{\si}(x^\sigma,y^\sigma)&=h^{\si}(x,y),\label{tinvol}\\
h^{\si}(x  y,z)&=h^{\si}(x,z  y), \quad \forall x,y,z\in \Alg.\label{tQass}
\end{align}
In this case the trilinear form $h^{\si}(x  y, z)$ is completely symmetric. It determines a cubic form $u(x)$ on $\mathbb{A}\times \mathbb{A}\times \mathbb{A}$ with formally the same expression, $u(x) = h^{\si}(x_{1}  x_{2}, x_{3})$. Polarizing this form with respect to the product metric $H = h\times h \times h$ and using the invariance of $h^{\si}$ yields
\begin{align}\label{Htrilinear}
\begin{aligned}
h^{\si}&(x_{1}  y_{2}, z_{3}) + h^{\si}(y_{1}  z_{2}, x_{3}) + h^{\si}(z_{1}  x_{2}, y_{3})\\
&\qquad\qquad\qquad+ h^{\si}(y_{1}  x_{2}, z_{3})+ h^{\si}(z_{1}  y_{2}, x_{3})+ h^{\si}(x_{1}  z_{2}, y_{3})\\
& = h(z_{1}, x_3^\sigma   y_2^\sigma+y_3^\sigma   x_2^\sigma) + h(z_{2}, x_1^\sigma   y_3^\sigma+y_1^\sigma   x_3^\sigma) + h(z_{3},x_2^\sigma   y_1^\sigma+y_2^\sigma   x_1^\sigma )\\
& = H(x\ast y, z),
\end{aligned}
\end{align}
where $\ast$ is the triple product defined in \eqref{tripleproduct}. In short, the cubic form of the triple product is the twisted trilinear form of the original algebra.

\begin{proposition}\label{pro:H}
If $(\mathbb{A},\diamond, \sigma)$ carries an involutively invariant metric $h$ then the commutative algebra $(\trip(\mathbb{A}),\ast)$ is metrized by $H$ defined by $H(x,y):=\sum_{i=1}^3h(x_i,y_i)$. Moreover, the cubic form of $(\trip(\mathbb{A}),\ast, H)$ is the trilinear form of $(\mathbb{A},\diamond, \sigma, h^{\si})$.
\end{proposition}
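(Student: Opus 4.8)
The plan is to obtain Proposition~\ref{pro:H} as a bookkeeping consequence of the polarization identity \eqref{Htrilinear} together with the correspondence between metrized commutative algebras and metric cubic spaces recalled in Section~\ref{sec:metr}; almost all of the computational content is already contained in \eqref{Htrilinear}.

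First I would check that $H(x,y)=\sum_{i=1}^{3}h(x_i,y_i)$ is a metric on $\trip(\mathbb{A})$: symmetry is inherited componentwise from that of $h$, and if $H(x,y)=0$ for all $y$, then taking $y$ supported in a single factor shows $h(x_i,\cdot)$ vanishes on $\mathbb{A}$ for each $i$, so $x_i=0$ by nondegeneracy of $h$. Since $\trip(\mathbb{A})$ is commutative and is equipped with the standard involution, being \emph{metrized by $H$} means precisely (as recalled in Section~\ref{sec:metr}) that the trilinear form $(x,y,z)\mapsto H(x\ast y,z)$ is totally symmetric, equivalently that every $L_{\ast}(x)$ is $H$-self-adjoint.

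Next I would invoke the observation made just after \eqref{tQass}: involutive invariance of $h$ forces the trilinear form $(a,b,c)\mapsto h^{\si}(a\diamond b,c)$ on $\mathbb{A}$ to be totally symmetric. Hence $u(x):=h^{\si}(x_1\diamond x_2,x_3)$ is a well-defined cubic form on the vector space $\trip(\mathbb{A})$, and its full polarization with respect to $H$, computed in \eqref{Htrilinear}, equals $(x,y,z)\mapsto H(x\ast y,z)$ with $\ast$ the product \eqref{tripleproduct}. Because the polarization of any cubic form is automatically totally symmetric, $(x,y,z)\mapsto H(x\ast y,z)$ is totally symmetric, so $(\trip(\mathbb{A}),\ast,H)$ is metrized; equivalently, by the uniqueness in \eqref{trilinear1}, $\ast$ is exactly the commutative product that the cubic space $(\trip(\mathbb{A}),H,u)$ induces. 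For the \emph{moreover} clause, specializing $x=y=z$ in \eqref{Htrilinear} collapses the six summands on the left to $6\,h^{\si}(x_1\diamond x_2,x_3)$, so the cubic form $\tfrac16 H(x\ast x,x)$ of $(\trip(\mathbb{A}),\ast,H)$ equals $h^{\si}(x_1\diamond x_2,x_3)$, which is the trilinear form of $(\mathbb{A},\diamond,\si,h^{\si})$ read as a cubic on the triple.

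I do not anticipate a genuine obstacle, since \eqref{Htrilinear} already does the essential work. The one step I would actually carry out in full is the verification, slot by slot, that the right-hand side of \eqref{Htrilinear} is literally $H(x\ast y,z)$ for $\ast$ as defined in \eqref{tripleproduct} --- that is, that the twisted terms $x_3^{\si}\diamond y_2^{\si}+y_3^{\si}\diamond x_2^{\si}$ and their cyclic companions are paired against the correct components of $z$ --- because an index error there is the only way the argument could fail.
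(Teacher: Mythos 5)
Your argument is correct and follows essentially the same route as the paper's proof: both deduce the total symmetry of $(x,y,z)\mapsto H(x\ast y,z)$ from the identification in \eqref{Htrilinear} of this trilinear form with the (manifestly symmetric) polarization of the cubic form $h^{\si}(x_1\diamond x_2,x_3)$, using the invariance of $h^{\si}$, and then obtain the \emph{moreover} clause by setting $x=y=z$ to get $H(x\ast x,x)=6\,h^{\si}(x_1\diamond x_2,x_3)$. The only additions in your write-up --- the explicit nondegeneracy check for $H$ and the slot-by-slot verification of the right-hand side of \eqref{Htrilinear} --- are routine details the paper leaves implicit.
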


\begin{proof}
Because $h$ is involutively invariant, the twisted bilinear form $h^{\si}$ is invariant, so \eqref{Htrilinear} shows that the trilinear form $H(x\ast y, z)$ is completely symmetric, which shows the invariance of $H$ with respect to $\ast$. Taking $x = y = z$ in \eqref{Htrilinear} yields $H(x\ast x, x) = 6h^{\si}(x_{1}  x_{2}, x_{3})$, which is the final claim.
\end{proof}

%\begin{proposition}
%Let $c$ be a nonzero idempotent in $(\mathbb{A},\diamond, \sigma)$ carries an involutively invariant metric $h$ then $(\trip(\mathbb{A}),\ast)$
%\end{proposition}

The triple algebra $\trip(\mathbb{A})$ decomposes naturally into an $H$-orthogonal direct sum of vector subspaces
\begin{align}\label{TTTdec}
\trip(\mathbb{A})=\mathbb{T}_1\oplus \mathbb{T}_2\oplus \mathbb{T}_3,
\end{align}
where $\mathbb{T}_1=\mathbb{A}\times 0\times 0$, $\mathbb{T}_2=0\times \mathbb{A}\times 0$, and $\mathbb{T}_3=0\times 0\times \mathbb{A}$ satisfy
\begin{align}\label{TTT}
&\mathbb{T}_i\ast \mathbb{T}_i=\{0\},&&
\mathbb{T}_i\ast \mathbb{T}_j=\mathbb{T}_k, \quad \{i,j,k\}=\{1,2,3\}.
\end{align}
If $\xt=\vt_1+\vt_2+\vt_3\in \trip(\mathbb{A})$, where
\begin{equation}\label{notation}
\vt_1=(x_1,0,0)\in \mathbb{T}_1, \quad \vt_2=(0,x_2,0)\in \mathbb{T}_2, \quad
\vt_3=(0,0,x_3)\in \mathbb{T}_3,
\end{equation}
then
\begin{align}\label{vvv}
\begin{split}
\vt_1\ast \vt_1&=\vt_2\ast \vt_2=\vt_3\ast \vt_3=0, \\
\vt_2\ast \vt_3&=(x_3^\sigma  x_2^\sigma,0,0)\in \mathbb{T}_{1},\\
\vt_3\ast \vt_1&=(0,x_1^\sigma  x_3^\sigma,0)\in \mathbb{T}_{2},\\
\vt_1\ast \vt_2&=(0,0,x_2^\sigma   x_1^\sigma)\in \mathbb{T}_{3}.
\end{split}
\end{align}
implying
$$
\xt\ast \xt=2(x_3^\sigma  x_2^\sigma,x_1^\sigma  x_3^\sigma,x_2^\sigma   x_1^\sigma).
$$
Moreover,
\begin{align}\label{lbull}
L_\ast(\vt_1)^2\vt_2=\vt_1\ast (\vt_1\ast \vt_2)=(0,x_1^\sigma (x_2^\sigma  x_1^\sigma )^\sigma,0)=(0,x_1^\sigma  (x_1  x_2),0)\in \mathbb{T}_2
\end{align}
and similarly
\begin{align}\label{vv}
\begin{split}
L_\ast(\vt_1)^3\vt_2&=
(0,0,((x_2^\sigma  x_1^\sigma)  x_1) x_1^\sigma)=
(0,0,(x_1  (x_1^\sigma  (x_1   x_2)))^\sigma)\\
&=(0,0,\sigma L_\diamond(x_1)L_\diamond(x_1^\sigma)L_\diamond(x_1)x_2)\in \mathbb{T}_3.
\end{split}
\end{align}
Similar relations hold for any pair of $\vt_i$ and $\vt_j$ with $i\ne j$. We record some useful identities needed in the proof of Theorem \ref{th:tuda}.
By \eqref{vvv},
\begin{align}\label{vv3}
\xt\ast \xt=2(\vt_2\ast \vt_3+\vt_3\ast \vt_1+\vt_1\ast \vt_2),
\end{align}
and therefore, using the fact that $H$ is involutively invariant (Proposition~\ref{pro:H}) and \eqref{vvv},
\begin{align}\label{HH}
\begin{split}
H(\xt\ast \xt,\xt)&=H(\vt_2\ast \vt_3,\vt_1)+H(\vt_3\ast \vt_1,\vt_2)+2H(\vt_1\ast \vt_2,\vt_3)
\\
&=6H(\vt_1\ast \vt_2,\vt_3).
\end{split}
\end{align}
Similarly,
\begin{align}\label{HH2}
\begin{split}
\xt\ast (\xt\ast\xt)
=&2\underbrace{(\vt_2\ast(\vt_2\ast\vt_1)+\vt_3\ast(\vt_3\ast\vt_1)}_{:=\wt_1}+
\underbrace{\vt_1\ast(\vt_1\ast\vt_2)+\vt_3\ast(\vt_3\ast\vt_2)}_{:=\wt_2}\\
&+\underbrace{\vt_1\ast(\vt_1\ast\vt_3)+\vt_2\ast(\vt_2\ast\vt_3))}_{:=\wt_3}=:2\wt_1+2\wt_2+2\wt_3, \qquad \wt_i\in \mathbb{T}_i.
\end{split}
\end{align}

\begin{theorem}\label{th:tuda}
Let $(\Alg,\diamond,\sigma,h)$ be an algebra with involution equipped with an involutively invariant metric $h$. Its triple $(\trip(\alg), \ast)$ metrized by $H = h\times h \times h$ is a Hsiang algebra if and only if $(\Alg,\diamond,\sigma,h)$ a Euclidean quasicomposition algebra. In this case $(\trip(\alg), \ast)$ is normalized by $\theta =4/3$.
\end{theorem}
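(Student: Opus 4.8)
The plan is to exploit the $H$-orthogonal grading $\trip(\Alg)=\mathbb{T}_1\oplus\mathbb{T}_2\oplus\mathbb{T}_3$ of \eqref{TTTdec}–\eqref{TTT}, writing $\xt=\vt_1+\vt_2+\vt_3$ as in \eqref{notation}, and to reduce the Hsiang identity for $\ast$ to an identity about left multiplications in $\Alg$. Since $\trip(\Alg)$ is automatically exact by \eqref{tracefree}, it is a Hsiang algebra exactly when $H(\xt\ast\xt,\xt^3)=b(\xt,\xt)\,H(\xt,\xt\ast\xt)$ holds for some symmetric bilinear form $b$, and a normalized radial Hsiang algebra exactly when moreover $b=\tfrac43 H$; so the theorem reduces to producing explicit formulas for the two sides.

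The right-hand side is essentially already in hand: by \eqref{vv3} and \eqref{HH}, $H(\xt,\xt\ast\xt)=6\,H(\vt_1\ast\vt_2,\vt_3)=6\,h^\si(x_1\diamond x_2,x_3)$, where $h^\si$ is the twisted form and the trilinear form $h^\si(x\diamond y,z)$ is completely symmetric (Proposition \ref{pro:H}). For the left-hand side I would use self-adjointness of $L_\ast(\xt)$ to write $H(\xt\ast\xt,\xt^3)=H(\xt,L_\ast(\xt)^3\xt)$, expand in the grading, and pair off the resulting six terms using invariance of $H$; each surviving term has the shape $H(\vt_i,L_\ast(\vt_j)^3\vt_k)$, which by \eqref{lbull}–\eqref{vv} and their cyclic analogues equals $h^\si(x_i,Q(x_j)x_k)$ with $Q(x):=L_\diamond(x)L_\diamond(x^\si)L_\diamond(x)$ — precisely the operator in the quasicomposition identity \eqref{compos4}. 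The outcome is $H(\xt\ast\xt,\xt^3)=8\,(\,h^\si(x_1,Q(x_2)x_3)+h^\si(x_2,Q(x_3)x_1)+h^\si(x_3,Q(x_1)x_2)\,)$. Tracking the involutions and which block each $L_\ast(\vt_i)^3\vt_j$ lands in is the fiddly part, but it is mechanical given the identities already recorded.

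For the ``if'' direction, if $\Alg$ is Euclidean quasicomposition then $Q(x)y=h(x,x)(x\diamond y)$, so (for instance) the first term equals $h(x_2,x_2)\,h^\si(x_1,x_2\diamond x_3)=h(x_2,x_2)\cdot\tfrac16 H(\xt,\xt\ast\xt)$ by symmetry of the twisted trilinear form, and similarly for the other two; summing the three cyclic terms gives $H(\xt\ast\xt,\xt^3)=\tfrac86 H(\xt,\xt)\,H(\xt,\xt\ast\xt)$, which together with exactness is \eqref{radialhsiang} with $\theta=4/3$, and $H$ is Euclidean since $h$ is. For the ``only if'' direction, $\trip(\Alg)$ being Hsiang forces $H$, hence $h$, to be Euclidean and gives $H(\xt\ast\xt,\xt^3)=b(\xt,\xt)H(\xt,\xt\ast\xt)$; reading this as a polynomial identity in $x_3$ and matching the part that is cubic in $x_3$ (on the left only $8\,h^\si(x_2,Q(x_3)x_1)$, on the right $6\,b_{33}(x_3,x_3)\,h^\si(x_1\diamond x_2,x_3)$, with $b_{33}$ the restriction of $b$ to $\mathbb{T}_3$), then using nondegeneracy of $h^\si$ and symmetry of the trilinear form, yields $Q(x)y=\tfrac34 b_{33}(x,x)(x\diamond y)$; the lower-degree parts force the off-diagonal blocks of $b$ to vanish and the diagonal blocks to agree, consistent with the cyclic automorphism of $\trip(\Alg)$.

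The genuinely delicate step is to identify $\beta(x):=\tfrac34 b_{33}(x,x)$ with $h(x,x)$, i.e. to obtain $\theta=4/3$ exactly. Here the Euclidean hypothesis is essential: $M(x):=L_\diamond(x^\si)L_\diamond(x)$ is $h$-self-adjoint (by \eqref{self}) and positive semidefinite, and the relation just derived gives $M(x)^2=\beta(x)M(x)$, so $\beta(x)^{-1}M(x)$ is an orthogonal projection of rank $\rank L_\diamond(x)$ whenever $L_\diamond(x)\ne 0$; a Zariski density argument as in the proof of Theorem \ref{the:defect} then determines $\beta$ up to a constant, and the normalization of the Hsiang equation pins that constant down, so \eqref{compos4} holds and $\Alg$ is a Euclidean quasicomposition algebra. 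That scalar identification is the main obstacle; everything else is bookkeeping inside the graded algebra.
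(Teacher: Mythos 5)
Your forward direction is correct and is essentially the paper's argument: the quasicomposition identity gives $L_\ast(\vt_i)^3\vt_j=H(\vt_i,\vt_i)\,\vt_i\ast\vt_j$, and summing the cyclic contributions produces the factor $8=\tfrac43\cdot 6$ exactly as in \eqref{HH4}. Your converse also starts well, and your route is genuinely different from the paper's: you expand the full identity $H(\xt\ast\xt,\xt^3)=b(\xt,\xt)H(\xt,\xt\ast\xt)$ by multidegree in $(x_1,x_2,x_3)$ and match the $(1,1,3)$ component, whereas the paper specializes to $\xt=\vt_1+\vt_2$ (so that $H(\xt,\xt\ast\xt)=0$ and $(\xt\ast\xt)\ast(\xt\ast\xt)=0$) and applies the linearized identity \eqref{Hsiang2}, separating degrees in $t$ after $x_1\mapsto tx_1$. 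Both routes legitimately yield $L_\diamond(x)L_\diamond(x^\sigma)L_\diamond(x)=\beta(x)L_\diamond(x)$ with $\beta(x)=\tfrac34 b_{33}(x,x)$, and your observation that the off-diagonal blocks of $b$ must vanish because the left side has no $(2,2,1)$-type component is correct.

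The gap is precisely the step you flag as delicate: identifying $\beta(x)$ with $h(x,x)$. Your proposed argument does not close it. From $M(x)^2=\beta(x)M(x)$ with $M(x)=L_\diamond(x^\sigma)L_\diamond(x)$ self-adjoint, the projection and Zariski-density argument gives only $\beta(x)=\tfrac{1}{m}\trace M(x)$ for a constant integer $m$; it determines $\beta$ in terms of $\trace L_\diamond(x^\sigma)L_\diamond(x)$, not in terms of $h(x,x)$. The proportionality $\trace L_\diamond(x^\sigma)L_\diamond(x)\propto h(x,x)$ is the content of \eqref{traceQ}, which is \emph{derived from} the quasicomposition identity in Theorem \ref{the:defect}; invoking it here is circular. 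Likewise ``the normalization of the Hsiang equation pins that constant down'' assumes what is to be proved: in the only-if direction the hypothesis is merely that the triple is a Hsiang algebra for \emph{some} $b$, and the normalization $b=\tfrac43 H$ is a conclusion, not a hypothesis. Indeed, as stated the identification cannot follow from your computation alone: rescaling $h\mapsto\lambda h$ on a Hurwitz algebra yields an algebra satisfying $Q(x)=\lambda^{-1}h'(x,x)L_\diamond(x)$ whose triple is still a (radial, but differently normalized) Hsiang algebra. The paper sidesteps this by \emph{assuming} in the converse that the triple satisfies \eqref{Hsiang1} with $\theta=4/3$; under that reading your computation gives $b_{33}=\tfrac43 h$, hence $\beta=h$, immediately and no delicate step remains. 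You should either adopt that hypothesis explicitly or replace the projection/Zariski paragraph with an honest treatment of the normalization; as written, the last paragraph of your proof does not establish \eqref{compos4}.
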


\begin{proof}
If $(\Alg,\diamond,\sigma,h)$ is a quasicomposition algebra, then the identity  $L_\diamond(x)L_\diamond(x^\sigma)L_\diamond(x)=h(x,x)L_\diamond(x)$ holds for any $x\in \mathbb{A}$, so \eqref{vv} implies
\begin{align}\label{vv2}
L_\ast(\vt_i)^3\vt_j=H(\vt_i,\vt_i)\vt_i\ast \vt_j.
\end{align}
Combining \eqref{HH2} with \eqref{vv3} and \eqref{TTT} yields
\begin{align}\label{HH3}
\begin{split}
H(\xt\ast \xt,\xt\ast (\xt\ast\xt))&=2H(\vt_2\ast\vt_3, \wt_1)
+2H(\vt_3\ast\vt_1, \wt_2)+2H(\vt_1\ast\vt_2, \wt_3).
\end{split}
\end{align}
By \eqref{vv2} there hold
\begin{align*}
H(\vt_2\ast\vt_3, \wt_1)&=H(\vt_2\ast\vt_3, \vt_2\ast(\vt_2\ast\vt_1))
+H(\vt_2\ast\vt_3,\vt_3\ast(\vt_3\ast\vt_1)\\
&=H(\vt_3, \vt_2\ast(\vt_2\ast(\vt_2\ast\vt_1)))
+H(\vt_2\ast\vt_3,\vt_3\ast(\vt_3\ast(\vt_3\ast\vt_1))\\
&=H(\vt_2,\vt_2)\, H(\vt_3, \vt_2\ast\vt_1)
+H(\vt_3,\vt_3)\, H(\vt_2, \vt_3\ast\vt_1)\\
&=[H(\vt_2,\vt_2)+H(\vt_3,\vt_3)]\, H(\vt_1, \vt_2\ast\vt_3),
\end{align*}
and the similar identities obtained by permuting the indices.
Substituting this in \eqref{HH3} and using \eqref{HH} yields
\begin{align}\label{HH4}
\begin{split}
H(\xt\ast \xt,\xt\ast (\xt\ast\xt))&
=8[H(\vt_1,\vt_1)+H(\vt_2,\vt_2)+H(\vt_3,\vt_3)]\, H(\vt_1, \vt_2\ast\vt_3)\\
&=\frac43 H(\xt,\xt)\,H(\xt\ast \xt,\xt).
\end{split}
\end{align}
Because $\ast$ is exact, this last relation shows that $\trip(\mathbb{A})$ satisfies the Hsiang equation \eqref{Hsiang1} for $\theta =4/3$.

In the converse direction, suppose that $\trip(\mathbb{A})$ satisfies the Hsiang equation \eqref{Hsiang1} with $\theta =4/3$.
Suppose that $\xt=\vt_1+\vt_2$, $\vt_i\in \mathbb{T}_i$. By \eqref{vvv}
$$
\xt\ast \xt=2\vt_1\ast \vt_2=(0,0,2x_2^\sigma   x_1^\sigma)\in \mathbb{T}_3,
$$
hence $H(\xt\ast \xt, \xt)=0$ and
$$
(\xt\ast \xt)\ast(\xt\ast \xt)=0.
$$
Also, using \eqref{lbull},
$$
(\xt\ast \xt)\ast \xt=2\vt_1\ast \vt_2\ast (\vt_1+\vt_2)=
(2(x_1  x_2)  x_2^\sigma,2x_1^\sigma  (x_1  x_2),0)\in \mathbb{T}_1\oplus \mathbb{T}_2
$$
Applying \eqref{Hsiang2} for $\theta =4/3$ to $\xt=\vt_1+\vt_2$ yields
$$
4((\xt\ast \xt)\ast \xt)\ast \xt-4 H(\xt,\xt) \xt\ast \xt=0.
$$
Using
$$
((\xt\ast \xt)\ast \xt)\ast \xt=
2(0,\,0,\, x_2^\sigma (x_2 (x_2^\sigma   x_1^\sigma))+(x_2^\sigma   x_1^\sigma)   x_1 )   x_1^\sigma  )
$$
and $H(\xt,\xt)=h(x_1,x_1)+h(x_2,x_2)$ yields
$$
x_2^\sigma (x_2 (x_2^\sigma   x_1^\sigma))+(x_2^\sigma   x_1^\sigma)   x_1 )   x_1^\sigma=
(h(x_1,x_1)+h(x_2,x_2))x_2^\sigma   x_1^\sigma.
$$
Setting $x_1\to tx_1$, and equating terms of equal degree in $t$ yields htat
$$
x_2^\sigma (x_2 (x_2^\sigma   x_1^\sigma))=h(x_2,x_2)x_2^\sigma   x_1^\sigma,\qquad
(x_2^\sigma   x_1^\sigma)   x_1 )   x_1^\sigma=h(x_1,x_1)x_2^\sigma   x_1^\sigma
$$
holds for any $x_1,x_2\in \Alg$, implying \eqref{compos4}. The theorem is proved.
\end{proof}

We give some immediate corollaries of Theorem \ref{th:tuda}.

\begin{corollary}
Let $(\Alg,\diamond,\sigma,h)$ be a Euclidean quasicomposition algebra. The cubic form
\begin{align}\label{cubicf}
u(\xt)=\frac16H(x\ast x,x)\in S^{3}\alg^{\star},% \cong \R{3\dim\Alg},
\end{align}
of the triple algebra $(\trip(\alg), \ast)$ metrized by $H = h\times h \times h$ is a radial eigencubic satisfying \eqref{Hsiang1} with $\theta =4/3$.
\end{corollary}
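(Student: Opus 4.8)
The plan is to read the statement off directly from Theorem~\ref{th:tuda}, using the correspondence between cubic forms and metrized commutative algebras developed in Section~\ref{sec:metr}. First I would observe that, because $(\Alg,\diamond,\sigma,h)$ is \emph{Euclidean}, the metric $h$ is positive definite, and hence so is the product metric $H=h\times h\times h$ on $\trip(\Alg)$; by Proposition~\ref{pro:H} the trilinear form $H(x\ast y,z)$ is completely symmetric, so $(\trip(\Alg),\ast,H)$ is a nonzero Euclidean metrized commutative algebra, and its cubic form is exactly the $u(x)=\tfrac16 H(x\ast x,x)$ of \eqref{cubicf} (equivalently $u(x)=h^{\si}(x_1 x_2,x_3)$).

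Next I would invoke Theorem~\ref{th:tuda}: since $(\Alg,\diamond,\sigma,h)$ is a Euclidean quasicomposition algebra, $(\trip(\Alg),\ast,H)$ is a Hsiang algebra normalized by $\theta=4/3$, that is, it satisfies \eqref{Hsiang1} with $\theta=4/3$. By \eqref{tracefree} the triple algebra is exact, $\trace L_\ast(x)=0$ for all $x$, so the term $H(x\ast x,x\ast x)\trace L_\ast(x)$ appearing in the general radial Hsiang equation \eqref{radialhsiang} vanishes identically; combined with \eqref{Hsiang1} this shows that $(\trip(\Alg),\ast,H)$ satisfies \eqref{radialhsiang} with $b=\tfrac43 H$, that is, it is a \emph{radial} Hsiang algebra in the sense of Definition~\ref{def:Hsiang}, not merely a nonradial one.

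Finally, identifying $(\trip(\Alg),H)$ isometrically with $(\R{3n},\scal{}{})$, $n=\dim\Alg$, the product $\ast$ is the one induced via \eqref{trilinear1} by its own cubic form $u$; hence the proposition of Section~\ref{sec:Hsiang} (a cubic form solves \eqref{1laplace} if and only if the associated commutative metrized algebra is radial Hsiang) applies and shows that $u$ solves \eqref{1laplace}, which by definition means $u$ is a radial eigencubic, and that it satisfies \eqref{Hsiang1} with $\theta=4/3$ was already recorded. There is essentially no obstacle here: the only points that need a moment's care are that positive-definiteness of $h$ is what makes $H$ a genuine Euclidean metric so that Definition~\ref{def:Hsiang} applies verbatim to $\trip(\Alg)$, and that exactness \eqref{tracefree} is precisely what upgrades the normalized Hsiang identity \eqref{Hsiang1} to the full radial Hsiang equation \eqref{radialhsiang}.
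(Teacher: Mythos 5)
Your proof is correct and follows exactly the route the paper intends: the corollary is stated as an immediate consequence of Theorem~\ref{th:tuda}, and your argument simply unpacks that deduction via Proposition~\ref{pro:H}, the exactness identity \eqref{tracefree}, and the correspondence between radial Hsiang algebras and solutions of \eqref{1laplace}. The two points you flag as needing care (positive-definiteness of $H$ and exactness upgrading \eqref{Hsiang1} to \eqref{radialhsiang}) are indeed the only substantive ones, and you handle both correctly.
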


\begin{example}\label{ex:trial}
The preceding implies that the triple of a composition algebra (in particular, of a Hurwitz algebra) is a Hsiang algebra and the corresponding cubic form \eqref{cubicf} is a radial eigencubic satisfying \eqref{Hsiang1} with $\theta =4/3$. This provides a short conceptual proof that the Hurwitz eigencubics of Example~\ref{ex:tri} above are in fact radial eigencubics.
\end{example}

By \eqref{TTT}, a triple algebra contains $2$-nilpotents, so the singular locus of the radial eigencubic determined by the triple of a quasicomposition algebra is always nontrivial.

\begin{corollary}\label{cor:polar}
The triple algebra $(\trip(\alg), \ast)$ of a Hurwitz algebra $\alg$ is a mutant polar algebra.
\end{corollary}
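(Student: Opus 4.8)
The plan is to exhibit an explicit polar decomposition of $\trip(\alg)$ meeting Definition~\ref{def:Clifford} and then to read off from the dimensions that it is of mutant type. A Hurwitz algebra $\alg$ — so $\alg\cong\Hurw_d$ with $d\in\{1,2,4,8\}$, positive definite norm $n$, canonical involution $\sigma$, and metric normalized so that $h(x,x)=n(x)$ — is a unital composition algebra, hence a Euclidean quasicomposition algebra; by Theorem~\ref{th:tuda} this already makes $(\trip(\alg),\ast,H)$ a radial Hsiang algebra normalized by $\theta=4/3$, so only the polar structure remains to be supplied. I would take $\alg_0=\mathbb{T}_1$ and $\alg_1=\mathbb{T}_2\oplus\mathbb{T}_3$ in the notation of \eqref{TTTdec}: these subspaces are nonzero ($\dim\alg_0=d$, $\dim\alg_1=2d$) and $H$-orthogonal because $H=h\times h\times h$ is the product metric.

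Conditions (i) and (ii) of Definition~\ref{def:Clifford} follow immediately from the grading relations \eqref{TTT}: $\mathbb{T}_1\ast\mathbb{T}_1=\{0\}$ is (i), while $\mathbb{T}_2\ast\mathbb{T}_2=\mathbb{T}_3\ast\mathbb{T}_3=\{0\}$ together with $\mathbb{T}_2\ast\mathbb{T}_3=\mathbb{T}_1$ gives $\alg_1\ast\alg_1\subseteq\mathbb{T}_1=\alg_0$, which is (ii). Condition (iv) costs nothing: by \eqref{tracefree} the triple algebra is exact, so $\trace L_\ast(x)=0$ for every $x$, in particular on $\alg_0$. The only real computation is condition (iii), i.e.\ \eqref{Vb}. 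Writing $x=(a,0,0)\in\alg_0$ and $y=(0,b,c)\in\alg_1$ and using \eqref{tripleproduct}, one gets $x\ast y=(0,\,a^\sigma\diamond c^\sigma,\,b^\sigma\diamond a^\sigma)$; a second application of \eqref{tripleproduct}, together with $(a^\sigma\diamond c^\sigma)^\sigma=c\diamond a$ and $(b^\sigma\diamond a^\sigma)^\sigma=a\diamond b$, gives $x\ast(x\ast y)=\bigl(0,\,a^\sigma\diamond(a\diamond b),\,(c\diamond a)\diamond a^\sigma\bigr)$. Here $a^\sigma\diamond(a\diamond b)=L_\diamond(a^\sigma)L_\diamond(a)b=n(a)b=h(a,a)b$ by the composition identity \eqref{compos3}, and $(c\diamond a)\diamond a^\sigma=R_\diamond(a^\sigma)R_\diamond(a)c=n(a)c=h(a,a)c$ by its right-handed analogue $R_\diamond(x^\sigma)R_\diamond(x)=n(x)\,\Id$, equally valid in a Hurwitz algebra. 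Since $H(x,x)=h(a,a)$, this reads $x\ast(x\ast y)=H(x,x)\,y$, which is (iii).

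Hence $(\trip(\alg),\ast,H)$ is a polar algebra with $\dim\alg_1=2d=2\dim\alg_0$, so it is a mutant by Definition~\ref{def:mutant}. The main obstacle is really only the bookkeeping in step (iii) — unwinding \eqref{tripleproduct} twice with the involutions placed correctly and invoking the two-sided composition identity for Hurwitz algebras — after which everything is mechanical. As a sanity check one can instead argue through cubic forms: by Proposition~\ref{pro:H} the cubic form of $\trip(\Hurw_d)$ is the twisted trilinear form $h(x_1\diamond x_2,\,x_3^\sigma)$, which after absorbing the canonical involution is $\re((z_1z_2)z_3)$, i.e.\ the Hurwitz eigencubic \eqref{cartan}, manifestly of Clifford type; so $\trip(\Hurw_d)$ is polar by Theorem~\ref{th:polar}, and the three cyclically related choices $\{\mathbb{T}_i,\ \mathbb{T}_j\oplus\mathbb{T}_k\}$ exhibit the $3!$ polar decompositions permuted by $S_3$ that are characteristic of mutants.
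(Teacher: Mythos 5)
Your proof is correct and follows essentially the same route as the paper's: take $\alg_0=\mathbb{T}_1$, $\alg_1=\mathbb{T}_2\oplus\mathbb{T}_3$, get (i), (ii) from \eqref{TTT}, (iii) from the composition identity via \eqref{lbull}, and (iv) from exactness \eqref{tracefree}. You are merely a bit more explicit than the paper in treating the $\mathbb{T}_3$-component of (iii) via the right-handed composition identity and in recording the dimension count $\dim\alg_1=2\dim\alg_0$ that makes the algebra a mutant.
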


\begin{proof}
In the notation of \eqref{TTTdec}, let $\mathbb{B}_0:=\mathbb{T}_1$ and $\mathbb{B}_1:=\mathbb{T}_2\oplus \mathbb{T}_3,$ such that $\trip(\alg)=\mathbb{B}_0\oplus\mathbb{B}_1$. By \eqref{TTT},
$\mathbb{B}_0\ast \mathbb{B}_0=0$ and $\mathbb{B}_1\ast \mathbb{B}_1\subset\mathbb{B}_0$, which imply (i) and (ii) in Definition~\ref{def:Clifford}.
Next, using the notation \eqref{notation}, for an arbitrary $v_i\in \mathbb{T}_1$ and $v_2\in \mathbb{T}_2$ it follows from \eqref{lbull} and \eqref{compos3} that
$$
L_\ast(\vt_1)^2\vt_2=(0,x_1^\sigma  (x_1  x_2),0)=h(x_1,x_1)\,(0,0,x_2)=H(v_1,v_1)v_2\in \mathbb{T}_2,
$$
which implies (iii) in Definition~\ref{def:Clifford}. Because $\trace L_\ast(x)=0$, there holds (iv) in Definition~\ref{def:Clifford}, and the desired conclusion follows.
\end{proof}

%%% Removed because \eqref{pm} has been removed
%\begin{remark}
%The construction in the proof of Corollary \ref{cor:polar} corresponds with \eqref{pm} with $\alg_0=\mathbb{T}_1$, $\alg_1^-=\mathbb{T}_2$ and $\alg_1^+=\mathbb{T}_3$ (up to a permutation).
%\end{remark}

\begin{remark}
There are \emph{weak} versions of quasicomposition algebras in which the symmetric bilinear form $h$ is allowed to be degenerate and interesting examples can be found. Via the tripling construction such degenerate quasicomposition algebras yield degenerate Hsiang algebras in the sense of Remark \ref{rem:weak}.
An example is given by the $6$-dimensional real vector space $\alg = \gl(2, \rea)\times \rea^{2}$ with the product $(X, u)  (Y, v) = (XY, \bar{X}v + Yu)$ where $\bar{X} = \tr(X)\id_{\rea^{2}} - X$,  the involution $(X, u)^{\si} = (\bar{X}, -u)$, and the metric $h((X, u), (Y, v)) = \tfrac{1}{2}\tr(\bar{X}Y + \bar{Y}X)$ given by polarizing the quadratic form $q(X, u) = \det X= \tfrac{1}{2}\tr(\bar{X}X)$. Because $\bar{X}X = \det(X)\id_{\rea^{2}}$, this solves $L_{ }((X, u)^{\si})L_{ }(X, u) = \det(X)\id_{\alg} = h((X, u), (X, u) \Id_{\alg}$, so solves \eqref{compos4}, though $h$ has rank $4$ rather than $6$. Such weak quasicomposition algebras are not discussed further here.
\end{remark}

\section{Realizability of exceptional Hsiang algebras as triples}\label{sec:Delta}

The triple $\trip(\alg)$ of a quasicomposition algebra $(\Alg,\diamond,\sigma,h)$ is a Hsiang algebra. However, because $\dim \trip(\Alg)=3\dim \Alg$, the dimension of a triple algebra is divisible by $3$, so not every Hsiang algebra can be realized as the triple of a quasicomposition algebra. On the other hand, by \eqref{three} the dimension of an exceptional eigencubic that is not a Cartan isoparametric eigencubic is also divisible by 3. This suggests characterizing those Hsiang algebras \textit{realizable} as triple algebras.
%% The relation of the preceding paragraph with the theorem that follows needs to be made more explicit.

\begin{theorem}\label{th:hurwitzdefect}
Let $(\Alg,\diamond,\sigma,h)$ be a Euclidean quasicomposition algebra with defect $\delta(\Alg)$ defined by \eqref{traceQ}. Then the triple $(\trip(\Alg),\ast,H)$ is a Killing-metrized Hsiang algebra (so is exceptional or mutant). Furthermore,
\begin{align}\label{did}
\delta(\Alg)=d(\trip(\Alg)),
\end{align}
where $d(\trip(\Alg))$ is the Hurwitz dimension of $\trip(\Alg)$ (see Definition \ref{def:hurdim}).
\end{theorem}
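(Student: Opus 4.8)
The plan is to prove the statement in two stages. First I would show that the triple $(\trip(\Alg),\ast,H)$ is Killing metrized; together with the fact that it is a Hsiang algebra normalized by $\theta=4/3$ (Theorem~\ref{th:tuda}) and with Theorem~\ref{th:trace} this gives at once that it is exceptional or mutant, so that its Hurwitz dimension $d(\trip(\Alg))$ is defined. Then I would extract $d(\trip(\Alg))$ from a single trace identity and compare it with $\delta(\Alg)$.

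For the first stage the key computation is the Killing form $\kappa(x,y)=\trace L_\ast(x)L_\ast(y)$ of $\trip(\Alg)$, read off from the block description \eqref{matr}. Since $\trip(\Alg)$ is a metrized commutative algebra, $L_\ast(x)$ is $H$-self-adjoint (cf.~\eqref{self}), so $\kappa(x,x)=\trace L_\ast(x)^{2}$ is the squared $H$-Frobenius norm of $L_\ast(x)$, i.e.\ the sum of the squared $h$-Frobenius norms of the six off-diagonal blocks $\sigma L_\diamond(x_i)$ and $\sigma R_\diamond(x_i)$, $i=1,2,3$, of \eqref{matr}. Because $\sigma$ is $h$-orthogonal \eqref{invol} and $L_\diamond(a)^{\ast}=L_\diamond(a^{\sigma})$, $R_\diamond(a)^{\ast}=R_\diamond(a^{\sigma})$ by \eqref{self} and \eqref{Qass1}, the squared norm of $\sigma L_\diamond(x_i)$ is $\trace L_\diamond(x_i^{\sigma})L_\diamond(x_i)$ and that of $\sigma R_\diamond(x_i)$ is $\trace R_\diamond(x_i^{\sigma})R_\diamond(x_i)$; by \eqref{RL} the latter trace is a $\sigma$-conjugate of $L_\diamond(x_i)L_\diamond(x_i^{\sigma})$, hence equals $\trace L_\diamond(x_i)L_\diamond(x_i^{\sigma})$ as well. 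So each of the six blocks has squared norm $\trace L_\diamond(x_i)L_\diamond(x_i^{\sigma})$, which by the defect identity \eqref{traceQ} of Theorem~\ref{the:defect} equals $(\dim\Alg-\delta(\Alg))\,h(x_i,x_i)$. Adding the two blocks attached to each index $i$ gives $\kappa(x,x)=2(\dim\Alg-\delta(\Alg))H(x,x)$, and polarizing, $\kappa=2(\dim\Alg-\delta(\Alg))H$. As $h$, hence $H$, is positive definite and $0\le\delta(\Alg)\le\dim\Alg-1$ (Theorem~\ref{the:defect}), $\kappa$ is a positive scalar multiple of $H$, hence nondegenerate, and it is $\ast$-invariant because $H$ is (Proposition~\ref{pro:H}); thus $\trip(\Alg)$ is Killing metrized.

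For the identity \eqref{did} I would fix a nonzero idempotent $c\in\trip(\Alg)$, which exists by Proposition~\ref{pro:ide} since $\trip(\Alg)$ is a nonzero Euclidean metrized commutative algebra, and evaluate $\trace L_\ast(c)^{2}$ in two ways. On the one hand $\trip(\Alg)$ is exact \eqref{tracefree}, hence a nondegenerate radial Hsiang algebra, so the Peirce apparatus of Section~\ref{sec:Peirce} applies: $L_\ast(c)$ has spectrum $\{1,-1,-\tfrac12,\tfrac12\}$ with $1$ simple and the remaining eigenspaces of dimensions $n_{1}$, $n_{2}$, $2n_{1}+n_{2}-2$, so $\trace L_\ast(c)^{2}=\tfrac12+\tfrac32 n_{1}+\tfrac12 n_{2}$. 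On the other hand, by the formula just proved, $\trace L_\ast(c)^{2}=\kappa(c,c)=2(\dim\Alg-\delta(\Alg))H(c,c)$, and since $\trip(\Alg)$ is $\theta=4/3$-normalized, \eqref{length} gives $H(c,c)=3/4$, whence $\trace L_\ast(c)^{2}=\tfrac32(\dim\Alg-\delta(\Alg))$. Equating the two values and using \eqref{n1n2} in the form $3\dim\Alg=\dim\trip(\Alg)=3n_{1}+2n_{2}-1$ to eliminate $n_{1}$ yields $n_{2}=3\delta(\Alg)+2$. Comparing with \eqref{d}, which for $\trip(\Alg)$ reads $n_{2}=3d(\trip(\Alg))+2$, gives $\delta(\Alg)=d(\trip(\Alg))$. (When $\delta(\Alg)=0$, equivalently $\Alg$ is a composition algebra, this reproduces $n_{2}=2$, the mutant case $d=0$, consistent with Corollary~\ref{cor:polar}.)

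I expect the main obstacle to be the careful verification of the Killing form formula $\kappa=2(\dim\Alg-\delta(\Alg))H$: one must keep track, in the product of block operators coming from \eqref{matr}, of which copy of $\Alg$ each block maps between, and apply the involution identities \eqref{RL}, \eqref{Qass1} and \eqref{self} correctly so as to reduce each block norm to the trace $\trace L_\diamond(x_i)L_\diamond(x_i^{\sigma})$ governed by Theorem~\ref{the:defect}. Everything after that is a short linear-algebra manipulation drawing only on the idempotent-length relation \eqref{length} and the standard Peirce identities \eqref{n1n2} and \eqref{d}.
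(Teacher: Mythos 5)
Your proposal is correct and follows essentially the same route as the paper: computing $\trace L_\ast(x)^2$ from the block form \eqref{matr} together with the defect identity \eqref{traceQ} to obtain $\kappa=2(\dim\Alg-\delta(\Alg))H$, and then evaluating $\trace L_\ast(c)^2$ at an idempotent both via this formula (using \eqref{length}) and via the Peirce spectrum, and eliminating the remaining Peirce dimension. The only cosmetic difference is that you eliminate $n_1$ via \eqref{n1n2} and compare $n_2=3\delta(\Alg)+2$ with \eqref{d}, whereas the paper substitutes \eqref{d} and \eqref{three} first and solves for $\delta(\Alg)$ directly; the two manipulations are algebraically equivalent.
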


\begin{proof}
Tracing the square of \eqref{matr} and using the properties of the trace, the identities \eqref{RL}, and \eqref{traceQ} of Theorem \ref{the:defect} yields
\begin{align*}
\trace L_\ast(\xt)^2&=\trace \left(
  \begin{array}{ccc}
    0 & \sigma R_\diamond(x_3) & \sigma L_\diamond(x_2) \\
    \sigma  L_\diamond(x_3) & 0 & \sigma R_\diamond(x_1) \\
    \sigma R_\diamond(x_2) & \sigma L_\diamond(x_1) & 0 \\
  \end{array}
\right)^2\\
&=2\trace (\sigma R_\diamond(x_3)\sigma L_\diamond(x_3)+\sigma L_\diamond(x_2)\sigma R_\diamond(x_2)+\sigma L_\diamond(x_1)\sigma R_\diamond(x_1))\\
&=2\trace (L_\diamond(x_1^\sigma) L_\diamond(x_1)+L_\diamond(x_2^\sigma) L_\diamond(x_2)+L_\diamond(x_3^\sigma) L_\diamond(x_3))\\
&=2(\dim \Alg-\delta(\Alg))\sum_{i=1}^3h(x_i,x_i)\\
&=2(\dim \Alg-\delta(\Alg))H(\xt,\xt).
\end{align*}
Thus, the Killing trace-form is proportional to the invariant metric $H$, hence $(\trip(\Alg),\ast,H)$ is a Killing-metrized Hsiang algebra.

Next, let $c$ be a nonzero idempotent in $\mathbb{T}:=(\trip(\Alg),\ast,H)$. Then, by \eqref{length},
\begin{align}\label{LT1}
\trace L_\ast(c)^2=2(\dim \Alg-\delta(\Alg))H(c,c)=\frac32(\dim \Alg-\delta(\Alg)).
\end{align}
On the other hand, combining \eqref{Periced} with \eqref{d}, yields directly
\begin{equation}\label{LT2}
\begin{split}
\trace L_\ast(c)^2&=1+\dim \mathbb{T}_c(-1)+\frac14(\dim \mathbb{T}_c(-\frac12)+\dim \mathbb{T}_c(\frac12))\\
&=\frac32(\dim \mathbb{T}_c(-1)+d+1).
\end{split}
\end{equation}
By \eqref{three}, $3\dim \Alg=\dim \mathbb{T}=3(2d+1+\dim \mathbb{T}_c(-1))$, hence
$$
\dim \Alg=2d+1+\dim \mathbb{T}_c(-1).
$$
Equating \eqref{LT1} with \eqref{LT2} yields
$$
\dim \mathbb{T}_c(-1)+d+1=\dim \Alg-\delta(\Alg)=2d+1+\dim \mathbb{T}_c(-1)-\delta(\Alg),
$$
which implies \eqref{did}.
\end{proof}

By Theorem \ref{th:hurwitzdefect} the defect of a quasicomposition algebra equals the Hurwitz dimension of its triple algebra which is a Killing metrized Hsiang algebra, hence we have

\begin{corollary}
The defect of a quasicomposition algebra  $\delta(\Alg)\in \{0,1,2,4,8\}$.
\end{corollary}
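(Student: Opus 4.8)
The plan is to reduce the corollary to Theorem \ref{th:hurwitzdefect} together with the corollary stated just after Definition \ref{def:hurdim}. Let $(\Alg,\diamond,\sigma,h)$ be a nonzero Euclidean quasicomposition algebra. First I would invoke Theorem \ref{th:hurwitzdefect}, which asserts that the triple $(\trip(\Alg),\ast,H)$ is a Killing-metrized Hsiang algebra and that $\delta(\Alg)=d(\trip(\Alg))$, the Hurwitz dimension of the triple in the sense of Definition \ref{def:hurdim}. Thus the problem reduces to determining the possible values of the Hurwitz dimension of a Hsiang algebra that arises as a triple.

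Second, I would observe that such a triple is never of Cartan type. Indeed, $\dim\trip(\Alg)=3\dim\Alg$ is divisible by $3$, whereas, as recorded in Table \ref{tabs} (first five entries, with $n_2=0$) and consistent with \eqref{three}, the Cartan algebras have ambient dimension in $\{2,5,8,14,26\}$, none of which is a multiple of $3$; equivalently, $\trip(\Alg)$ has $n_2>0$. Consequently the corollary following Definition \ref{def:hurdim}, which states that a Killing-metrized Hsiang algebra distinct from a Cartan one satisfies \eqref{d} with $d\in\{0,1,2,4,8\}$, applies to $\trip(\Alg)$ (the value $d=0$ being the mutant case $n_2=2$, and $d\in\{1,2,4,8\}$ corresponding to $n_2=3d+2\in\{5,8,14,26\}$ via the Jordan-von Neumann-Wigner classification). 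Combining this with the identity $\delta(\Alg)=d(\trip(\Alg))$ of \eqref{did} gives $\delta(\Alg)\in\{0,1,2,4,8\}$.

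The argument is essentially bookkeeping on top of previously established facts, so there is no genuine obstacle; the only point demanding attention is verifying that $\trip(\Alg)$ lies in the scope of the corollary after Definition \ref{def:hurdim}, i.e. that it cannot be a Cartan algebra, and this is settled by the elementary divisibility observation above. (One could also remark, consistently with Theorem \ref{the:defect}, that the value $\delta(\Alg)=0$ occurs precisely for composition algebras, whose triples are the mutant polar algebras of Corollary \ref{cor:polar}, but this is not needed for the proof.)
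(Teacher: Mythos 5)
Your proof is correct and follows essentially the same route as the paper, which deduces the corollary in one line from Theorem \ref{th:hurwitzdefect} together with the corollary after Definition \ref{def:hurdim}. Your additional check that $\trip(\Alg)$ cannot be of Cartan type (since $3\mid\dim\trip(\Alg)$ while the Cartan dimensions $2,5,8,14,26$ are not multiples of $3$) is a small but genuinely needed verification that the paper leaves implicit.
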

%\begin{proof}
%By Theorem \ref{th:hurwitzdefect} the defect of a quasicomposition algebra equals the Hurwitz dimension of its triple algebra which is a Killing metrized Hsiang algebra, so has Hurwitz dimension at most $4$ by the discussion in section \ref{sec:Peirce}.
%\end{proof}

It should be possible to control the defect of a quasicomposition algebra directly by associating a Hurwitz algebra with the operator $L(x)L(x^{\si})$.

\begin{corollary}
A quasicomposition algebra has dimension at most $24$.
\end{corollary}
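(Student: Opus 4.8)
The plan is to reduce this to the finiteness theorem for exceptional Hsiang algebras via the tripling construction. By Theorem~\ref{th:hurwitzdefect}, the triple $(\trip(\Alg),\ast,H)$ of a Euclidean quasicomposition algebra $(\Alg,\diamond,\sigma,h)$ is a Killing-metrized Hsiang algebra, hence by Theorem~\ref{th:trace} it is either exceptional or mutant; and since $\dim\trip(\Alg)=3\dim\Alg$, it suffices to show that an exceptional-or-mutant Hsiang algebra has dimension at most $72$.

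I would argue the two cases as follows. If $\trip(\Alg)$ is exceptional, then its ambient dimension is at most $72$ by \cite[Theorem~6.11.2]{NTVbook}, so $3\dim\Alg\le72$. If $\trip(\Alg)$ is mutant, then $n_2=2$, whence by \eqref{did} and Definition~\ref{def:hurdim} the defect satisfies $\delta(\Alg)=d(\trip(\Alg))=0$; by Theorem~\ref{the:defect} this means $\Alg$ is a composition algebra, so $\dim\Alg\in\{1,2,4,8\}$ by the classification of composition algebras \cite{SpringerVeldkamp}, and again $3\dim\Alg\le72$. (Equivalently, by Corollary~\ref{cor:polar} and the description of mutants, a mutant is one of the four algebras $\trip(\Hurw_d)$, $d\in\{1,2,4,8\}$, of dimensions $3,6,12,24$; note also that $\trip(\Alg)$ can never be a Cartan isoparametric eigencubic, since the latter occur only in the dimensions $\{2,5,8,14,26\}$, none divisible by three.) In all cases $\dim\Alg\le24$.

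I do not expect a genuine obstacle: the quantitative content is supplied entirely by Theorem~\ref{th:hurwitzdefect} and the cited bound of \cite{NTVbook}, and the only care needed is to verify that the mutant case (and the vacuous Cartan case) also respect the bound. If one wants a sharper statement, combining $\dim\Alg=2\delta(\Alg)+1+n_1$ (read off from the proof of Theorem~\ref{th:hurwitzdefect}) with $\delta(\Alg)\in\{0,1,2,4,8\}$ and the admissible Peirce data $(n_1,n_2)$ of Table~\ref{tabs} shows that dimension $24$ could be attained only by a quasicomposition algebra of defect $8$ with $n_1=7$; since such an algebra is conjectured not to exist, the actual maximal dimension of a quasicomposition algebra is presumably strictly smaller than $24$.
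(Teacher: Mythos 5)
Your proof is correct and follows essentially the same route as the paper, which simply says the triple is a Killing-metrized Hsiang algebra and concludes "by inspection of Table~\ref{tabs}"; you have merely unpacked that inspection into the explicit bound of $72$ for the exceptional case and a separate (easy) check for the mutant case. The extra care about mutants and the sharper remark at the end are sensible but not a different method.
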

\begin{proof}
The triple algebra of a quasicomposition algebra is a Killing metrized Hsiang algebra, so this follows by inspection of Table~\ref{tabs}.
\end{proof}

\section{Concluding remarks}
The definition of Hsiang algebras is motivated by a problem in differential geometry and this context leads to a focus on solutions over the real field metrized by a Euclidean metric. However the problem of classifying Hsiang algebras makes sense for real algebras with metrics of other signatures and over arbitrary fields. The differential geometric context motivates an initial focus on Euclidean metrics. Relaxing this restriction to include isotropic metrics is necessary for considering algebraically closed fields but is also relevant for the problem of finding algebraic mean curvature zero hypersurfaces in pseudo-Riemannian space forms.

%\begin{table}[ht]
%\begin{tabular}{l|l}
%  %\hline
%  % after \\: \hline or \cline{col1-col2} \cline{col3-col4} ...
% \textbf{Radial eigencubic} & \textbf{Radial Hsiang algebra} \\ \hline
% Cartan eigencubic & eikonal algebra\\ \hline
% Clifford type & polar algebra\\ \hline
% exceptional & exceptional algebra\\ \hline
% Hurwitz eigencubic & mutant algebra
% \end{tabular}
% \medskip
%
% \caption{The correspondence between different classes of radial Hsiang eigencubics and radial Hsiang algebras}
%\end{table}

Comparision with division algebras, composition algebras, and the like suggests that it is useful initially to focus on base fields that are algebraically closed or real so as to avoid issues that essentially arithmetic (rather than related to the structural properties of the class of algebras under consideration). For example, the classical Hurwitz theorem says that over the real field there are four unital composition algebras, while over the rationals there infinite families in each dimension. On the other hand, that there are only four dimensions in which there occur unital composition algebras is true independently of the base field. This is a genuinely structural statement that does not depend on arithmetic. The expectation is that the general structural features described here and summarized in Diagram \ref{diagram} remain true over general fields.

The notion of quasicomposition algebra arises because of Theorem \ref{th:tuda} - these are exactly the algebras with involution whose triples are Hsiang. However, the notion appears natural in and of itself when it is seen to encompass a slew of closely related algebras, namely unital composition algebras, cross product algebras, the six-dimensional vector color algebras, and twisted forms of all of the preceding, all of which are constructed from Hurwitz algebras. Examples are known in dimensions $0$ (empty algebra), $1$ (the field itself), $2$, $3$, $4$, $6$, $7$, and $8$. In dimension $5$ there are \emph{weak} quasicomposition algebras for which the invariant bilinear form has rank $3$, but it seems likely that there does not exist a $5$-dimensional quasicomposition algebra. A complete classification of quasicomposition algebras is work in progress.

However, there do exist Hsiang algebras in dimensions %$15$, $30$, $36$, $42$, and $72$.
$15$, $27$, $30$, $54$, etc.
This suggests that either there are corresponding quasicomposition algebras in dimensions $5$, $9$, $10$, $18$, etc. or that the definition of quasicomposition algebra needs to be modified or relaxed in some minor way.

A fundamental feature of quasicomposition algebras is the constant rank property, that the left and right multiplication endomorphisms have constant rank, equal to the dimension minus the defect. By itself such a constant rank condition defines a larger (probably much larger) class of algebras. Such a condition admits many interesting variations. It would be interesting to study such conditions in detail.

There is a striking resemblance between radial eigencubics (and, in particular, the above dichotomy), and isoparametric hypersurfaces in the spheres.
%\marginpar{As this is a bit speculative and forward-looking I have moved it to the conclusions}
Namely, there are  infinitely many isoparametric hypersurfaces (all having four distinct principal curvatures), the FKM-type isoparametric hypersurfaces \cite{FKM}, naturally constructed from symmetric Clifford systems in a manner very similar to the radial eigencubics \eqref{uA}. On the other hand, there are only finitely many non-FKM-type isoparametric hypersurfaces \cite{CecilAnnals}. There are reasons to think the similarity seems is not accidental and this suggests that there should exist a correspondence relating radial eigencubics and isoparametric hypersurfaces in spheres.

\bibliographystyle{plain}

\def\cprime{$'$}

\end{document}